\numberwithin{equation}{section}
\newtheorem{thm}{Theorem}[section]
\newtheorem{lemma}[thm]{Lemma}
\newtheorem{prop}[thm]{Proposition}
\theoremstyle{definition}
\newtheorem{definition}[thm]{Definition}
\newtheorem*{notation}{Notation}
\theoremstyle{remark}
\newtheorem{remark}[thm]{Remark}
\newtheorem{example}[thm]{Example}
\def\mathcs{C^{*}}
\newcommand{\cs}{\ensuremath{\mathcs}}
\DeclareMathSymbol{\rtimes}{\mathbin}{AMSb}{"6F}
\def\lk{\langle}
\def\rk{\rangle}
\def\R{\mathbb{R}}
\def\C{\mathbb{C}}
\def\T{\mathbb{T}}
\def\Z{\mathbb{Z}}
\def\K{\mathcal{K}}
\DeclareMathOperator{\Otwo}{O(2)}
\DeclareMathOperator{\Ad}{Ad}
\DeclareMathOperator{\Ind}{Ind}
\DeclareMathOperator{\Prim}{Prim}
\DeclareMathOperator{\Aut}{Aut}
\DeclareMathOperator{\id}{id}
\def\set#1{\{\,#1\,\}}
\newcommand\sset[1]{\{#1\}}
\let\tensor=\otimes
\def\restr#1{|_{{#1}}}
\def\labelenumi{\textnormal{(\@alph\c@enumi)}}
\def\theenumi{\@alph \c@enumi}
\def\labelenumii{\textnormal{(\@roman\c@enumii)}}
\def\theenumii{\@roman \c@enumii}
\def\alphapart#1{\charno=96
\advance\charno by#1\char\charno}
\def\<{\langle}
\def\>{\rangle}
\let\ipscriptstyle=\scriptscriptstyle
\def\lipsqueeze{{\mskip -3.0mu}}
\def\ripsqueeze{{\mskip -3.0mu}}
\def\ipcomma{\nobreak\mathrel{,}\nobreak}
\newbox\ipstrutbox
\def\ipstrut{\copy\ipstrutbox}
\def\lip#1<#2,#3>{\mathopen{\relax_{\ipstrut\ipscriptstyle{
#1}}\lipsqueeze
\langle} #2\ipcomma #3 \rangle}
\def\blip#1<#2,#3>{\mathopen{\relax_{\ipstrut
\ipscriptstyle{ #1}}\lipsqueeze\bigl\langle} #2\ipcomma #3 \bigr\rangle}
\def\rip#1<#2,#3>{\langle #2\ipcomma #3
\rangle_{\ripsqueeze\ipstrut\ipscriptstyle{#1}}}
\def\brip#1<#2,#3>{\bigl\langle #2\ipcomma #3
\bigr\rangle_{\ripsqueeze\ipstrut\ipscriptstyle{#1}}}
\def\angsqueeze{\mskip -6mu}
\def\smangsqueeze{\mskip -3.7mu}
\def\trip#1<#2,#3>{\langle\smangsqueeze\langle #2\ipcomma #3
\rangle\smangsqueeze\rangle_{\ripsqueeze\ipstrut\ipscriptstyle{#1}}}
\def\btrip#1<#2,#3>{\bigl\langle\angsqueeze\bigl\langle #2\ipcomma
#3
\bigr\rangle
\angsqueeze\bigr\rangle_{\ripsqueeze\ipstrut\ipscriptstyle{#1}}}
\def\tlip#1<#2,#3>{\mathopen{\relax_{\ipstrut\ipscriptstyle{
#1}}\lipsqueeze \langle\smangsqueeze\langle} #2\ipcomma #3
\rangle\smangsqueeze\rangle}
\def\btlip#1<#2,#3>{\mathopen{\relax_{\ipstrut\ipscriptstyle{
#1}}\lipsqueeze
\bigl\langle\angsqueeze\bigl\langle} #2\ipcomma #3
\bigr\rangle\angsqueeze\bigr\rangle}
\def\ip(#1|#2){(#1\mid #2)}
\def\bip(#1|#2){\bigl(#1 \mid #2\bigr)}
\def\Bip(#1|#2){\Bigl( #1 \bigm| #2 \Bigr)}
\def\om{\omega}
\newcommand{\xtg}{\ensuremath{X\rtimes G}}
\newcommand\usc{upper-semicontinuous}
\def\sa_#1(#2,#3){\Gamma_{#1}(#2,#3)}
\newcommand\bundlefont{\mathscr}
\newcommand\B{\bundlefont{B}}
\newcommand\A{\bundlefont{A}}
\newcommand\gux{G\backslash X}
\newcommand\lt{\operatorname{lt}}
\newcommand\kltg{\K\bigl(L^{2}(G)\bigr)}
\renewcommand\H{\mathcal{H}}
\newcommand\spec[1]{(#1)^{\wedge}}
\newcommand\ind{\operatorname{ind}}
\newcommand\stab{\operatorname{Stab}}
\newcommand\stabxbh{\ensuremath{\stab\spec{X_{\beta}}}}
\newcommand\stabybh{\ensuremath{\stab\spec{Y_{\beta}}}}
\newcommand\ltgvs{\mathcal{W}_{V,\sigma}}%  L^{2}(G\times_{K,V\tensor\sigma^*} 
\newcommand\ltgvt{\mathcal{W}_{V,\tau}}
\newcommand\ltgvxs{\mathcal{W}_{V_{x},\sigma}}
\newcommand\fix{\operatorname{fix}}
\newcommand\N{\mathbb{N}}
\newcommand\Rep{\operatorname{Rep}}
\newcommand\ab{\operatorname{ab}}
\newcommand\djunion{\amalg}
\def\timeofday{%    Must be computed when called if preloaded
\hours=\time
\minutes=\hours
\divide\hours by60
\multiply\hours by60
\advance\minutes by-\hours
\divide\hours by60
\ifnum\hours>9\else0\fi\the\hours:\ifnum\minutes>9\else
0\fi\the\minutes}
\def\predate{\date{\color{red}\bfseries \the\day\ \ifcase\month\or
  January\or February\or March\or April\or May\or June\or July\or
        August\or September\or October\or November\or
           December\fi\ \the\year\ --- \timeofday\ --- v3}}
\begin{document}

       \title[Crossed products by strictly proper actions] {Structure
         of crossed products by strictly proper actions on
         continuous-trace algebras}

       \author[Echterhoff]{Siegfried Echterhoff}
       \address{Westf\"alische Wilhelms-Universit\"at M\"unster,
         Mathematisches Institut, Einsteinstr. 62 D-48149 M\"unster,
         Germany} \email{echters@uni-muenster.de}

       \author{Dana P. Williams} \address{Department of Mathematics \\
         Dartmouth College \\ Hanover, NH 03755-3551}

       \email{dana.williams@Dartmouth.edu}

       \subjclass[2000]{46L55}

       \date{20 August 2012}

       \thanks{The research for this paper was partially supported by
         the German Research Foundation (SFB 478 and SFB 878) and the
         EU-Network Quantum Spaces Noncommutative Geometry (Contract
         No. HPRN-CT-2002-00280) as well as the Edward Shapiro Fund at
         Dartmouth College.}

\begin{abstract}
  We examine the ideal structure of crossed products
  $B\rtimes_{\beta}G$ where $B$ is a continuous-trace \cs-algebra and
  the induced action of $G$ on the spectrum of $B$ is proper.  In
  particular, we are able to obtain a concrete description of the
  topology on the spectrum of the crossed product in the cases where
  either $G$ is discrete or $G$ is a Lie group acting smoothly on the
  spectrum of~$B$.
\end{abstract}

\maketitle

%%%%%%%%%%%%%%%%%%%%%%%%%%%%%%%%%%%%%%%%%%%%%%%%%%%%%%%%%%%%%
%%% End Front Matter
%%%%%%%%%%%%%%%%%%%%%%%%%%%%%%%%%%%%%%%%%%%%%%%%%%%%%%%%%%%%

\section*{Introduction} A first step in understanding the structure of
a crossed product $B\rtimes_{\beta}G$ is to describe its spectrum
together with its Jacobson topology.  Unfortunately, such a task is
hopelessly out of reach in any sort of general setting.  If we at
least assume that the action of $G$ on the spectrum of $B$ is
well-behaved, then the Mackey-Rieffel-Green machine allows us to
describe $\spec{B\rtimes_{\beta}G}$ as a \emph{set} in terms of
projective representations of the stability groups for the action of
$G$ on the spectrum of $B$.  These representations are determined by
the Mackey obstructions which can vary wildly even in the seemingly
most benign situations.  This wild behavior is often an insurmountable
obstruction to describing the Jacobson topology.  In the case where
one assumes away the difficulty --- that is, where we assume all the
Mackey obstructions vanish --- there has been considerable progress if
it is also assumed that the stabilizers are constant, see, for example
\cite{raewil:iumj91,olerae:jfa90,raeros:tams88,phirae:jot84}, and when
the stabilizer map is continuous \cite{raewil:jfa88}.  When nontrivial
Mackey obstructions are allowed, the progress has been more modest and
is usually accompanied with robust hypotheses \cite{ech:ma92,
  ech:mams96, echros:pjm95,echwil:ma95,horr:etds86}.  In this article,
we take on the case where $G$ acts properly on the spectrum $X$ of a
continuous-trace \cs-algebra $B$.  This in particular guarantees that
the Mackey-Rieffel-Green machine works very nicely and gives us a tidy
set-theoretic description of $\spec{B\rtimes_{\beta}G}$.
Specifically, if $[\omega_{x}]\in H^{2}(G_{x},\T)$ is the Mackey
obstruction at $x\in X$, then we can form the set
\begin{equation*}
  \stabxbh=\set{(x,\sigma) :\text{$x\in X$ and $\sigma\in\widehat
      G_{x,\omega_{x}}$}}, 
\end{equation*} 
where $\widehat G_{x,\omega_{x}}$ is the collection of irreducible
$\omega_{x}$-representations of the stability group $G_{x}$.  Then
$\stabxbh$ carries a natural $G$-action and we can identify the orbit
space $G\backslash \stabxbh$ with the spectrum of the crossed product
via an induction process: $(x,\sigma)\mapsto \ind_{x}(\sigma)$.  Our
first result is to equip $\stabxbh$ with a natural topology so that
induction induces a homeomorphism with the spectrum.

However to give a more useful and concrete description of the topology
on \stabxbh, and hence on the spectrum of the crossed product, we
require some hypotheses on $G$ (to at least partially tame the Mackey
obstructions).  In our main results, we are able to give such
descriptions in the cases where (1)~$G$ is discrete, and (2)~when $X$
is a manifold and $G$ is a Lie group acting smoothly, as well as
properly, on $X$.  For example, in the case where $G$ is discrete, we
can show that $(x_{n},\sigma_{n})\to (x,\sigma)$ in \stabxbh\ if and
only if $x_{n}\to x$ in $X$ and we eventually have $G_{x_{n}}\subseteq
G_{x}$ and $\sigma_{n}$ equivalent to a subrepresentation of
$\sigma\restr {G_{x_{n}}}$.  (The result for Lie groups is similar,
but a bit more technical.)

Since the methods of proof in our main results seem to require
separability, we assume throughout that the spaces and groups that
appear are second countable and that our algebras are separable.
Representations and other homomorphisms between \cs-algebras are
assumed to be $*$-preserving.

The starting point for this article is a nice structure theorem, in
terms of a generalized fixed point algebras, for crossed products for
proper actions {on spaces} due to the first author and H.~Emerson
\cite{echeme:em11}.  In fact, we make considerable use of some of the
results in that paper as well as some ideas from the Ph.D.~thesis of
Katharina Neumann \cite{neu:phd11}.  In \S1, we review the
construction of the generalized fixed point algebra from
\cite{echeme:em11} as well as some preliminary results on \cs-bundles.
In \S2, we specialize to the case of {strictly} proper actions on
continuous-trace \cs-algebras and study in detail the fixed point
algebras that arise in our investigations.  We also state the
particular version of the Mackey-Rieffel-Green machine that we will
employ.  In \S3, we introduce the topology on \stabxbh, and show that
the spectrum of $B\rtimes_{\beta}G$ is the quotient.  Our main results
are stated and proved in \S4.  In \S5, we exhibit a class of
  group extensions --- namely compact extensions $G$ of abelian groups
  --- where we can apply our results to the spectrum of $G$.  In
  particular, we work out in detail the spectrum of the
  crystallographic group \textbf{p4g}. Even in this case, our
  description is very subtle and clearly demonstrates the difficulty
  of working with varying Mackey obstructions.

\section{The Bundle Structure of a $\xtg$-algebra}
\label{sec:bundle-structure-xtg}

Recall that we call a \cs-algebra {$B$} a \emph{$C_{0}(X)$-algebra} if
there is a nondegenerate map $\Phi:C_{0}(X)\to ZM(B)$.  For the basics
on $C_{0}(X)$-algebras and their associated \usc\ \cs-bundles, we
refer to \cite{wil:crossed}*{Appendix~C}.  For convenience, we recall
some standard facts and notations here.  In particular, every
$C_{0}(X)$-algebra $B$ is $C_{0}(X)$-isomorphic to the section algebra
$\sa_{0}(X,\B)$ of an \usc\ \cs-bundle $p:\B\to X$.  The fibre over
$x$ can be identified with the quotient $B(x)=B/I_{x}$, where $I_{x}$
is the ideal of $B$ generated by products $f\cdot a$ with $f\in
C_{0}(X)$ such that $f(x)=0$ (and where, as is standard, we have
written $f\cdot a$ in place of $\Phi(f)(a)$).  When the map $b\mapsto
\|b\|$ is continuous from $\B$ to $\R$ (or equivalently, when the map
$x\mapsto \|b(x)\|$ is continuous for $b\in B$), we say that $\B$ is a
\emph{continuous} \cs-bundle.  If $Y$ is a closed subset of $X$ and if
$B=\sa_{0}(X,\B)$ is a $C_{0}(X)$-algebra, then
$B_{Y}:=\sa_{0}(Y,\B\restr Y)$ is the restriction of $B$ to $Y$.  We
can also realize $B_{Y}$ as the quotient of $B$ by the ideal generated
by $C_{0}(X\setminus Y)\cdot B$.

As in \cite{echeme:em11}, if $B$ is a $C_{0}(X)$-algebra for a
$G$-space $X$, and if $\beta:G\to \Aut B$ is a $C_{0}(X)$-linear
dynamical system, then we call $B$ a \emph{\xtg-algebra} if the
structure map $\Phi:C_{0}(X)\to ZM(B)$ is $G$-equivariant.

In this paper, we will always assume that the action of $G$ on $X$ is
\emph{proper} in that the map $(s,x)\mapsto (s\cdot x, x)$ is a proper
map from $G\times X$ to $X\times X$.  If $X$ is
a proper $G$-space, then the orbit space $\gux$ is Hausdorff and that
the stability groups $G_{x}=\set{s\in G:s\cdot x=x}$ are
compact \cite{pal:aom61}*{Theorem~1.2.9 and Proposition~1.1.4}.

If $B$ is a \xtg-algebra for a {strictly} proper $G$-action --- in
which case we say that $B$ is a \emph{{strictly} proper} \xtg-algebra
--- then there is an associated \emph{generalized fixed point algebra}
$B^{G,\beta}$ which agrees with the usual fixed point algebra when $G$
is compact (\cite{echeme:em11}*{\S2}).  We sketch the details of its
construction here.  We first form the \emph{induced algebra}
$C_{0}(X\times_{G,\beta}B)$ which consists of continuous functions
$F:X\to B$ which have the property $F(s\cdot
x)=\beta_{s}\bigl(F(x)\bigr)$ for all $s\in G$ and $x\in X$ and such
that $G\cdot x\mapsto \|F(x)\|$ vanishes at infinity on $\gux$.
(Induced algebras have been studied extensively for free and proper
actions of $G$ on $X$ --- for example see \cite{raewil:tams85},
\cite{rae:ma88} and \cite{hrw:tams00} --- but other than
\cite{echeme:em11}*{Lemma~2.2}, we are not aware of any systematic
study of induced algebras for {strictly} proper actions that are not
necessarily free.)  The properness of the $G$-action on $X$ guarantees
that the diagonal action of $G$ on $X\times X$ is proper.  Hence the
orbit space $G\backslash (X\times X)$ is Hausdorff and we obtain a
nondegenerate homomorphism
\begin{equation*}
  \Phi:C_{0}\bigl(G\backslash(X\times X)\bigr) \to
  ZM\bigl(C_{0}(X\times_{G,\beta}B) \bigr) 
\end{equation*}
by $\Phi(f)(F)\bigl(x)(y)=f([x,y])F(x)(y)$.  Then the generalized
fixed point algebra $B^{G,\beta}$ is defined to be the restriction of
the $C_{0}(G\backslash (X\times X))$-algebra
$C_{0}(X\times_{G,\beta}B)$ to the closed subspace $G\backslash
\Delta(X)$.  After identifying $X$ with $\Delta(X)$, we see that
$B^{G,\beta}$ inherits a $C_{0}(\gux)$-algebra structure.  As
mentioned above, if $G$ is compact, then the generalized fixed point
algebra $B^{G,\beta}$ is just the usual fixed point algebra.  Also, if
$B=C_{0}(X,A)$ for some \cs-algebra $A$ admitting a $G$-action
$\beta$, then $C_{0}(X,A)^{G,\lt\tensor\beta}$ is just the induced
algebra $C_{0}(X\times_{G,\beta}A)$.

If $G_{x}$ is the stabilizer of $x$, then $I_{x}=C_{0}(X\setminus\sset
x)\cdot B$ is a $G_{x}$-invariant ideal of $B$ and we have an induced
action $\beta^{x}:G_{x}\to \Aut B(x)$ given by
$\beta^{x}_{s}(b(x))=\beta_{s}(b)(x)$.  In fact, if $B=\sa_{0}(X,\B)$,
then $G$ acts on $\B$ via $\beta_{x,s}:B(x)\to B(s\cdot x)$ given by
\begin{equation}
  \label{eq:1}
  \beta_{x,s}\bigl(b(x)\bigr) =\beta_{s}(b)(s\cdot x)\quad\text{for
    all $b\in B$.}
\end{equation}
Then
\begin{equation}
  \label{eq:2}
  \beta_{s\cdot x,h}\circ \beta_{s,x}=\beta_{x,sh}\quad\text{for all
    $s,h\in G$ and $x\in X$.}
\end{equation}
If $h\in G_{x}$, then $\beta_{x,h}$ is just the induced automorphism
$\beta^{x}_{h}$ as defined above.\footnote{The $\beta_{x,s}$ give an
  action of the transformation groupoid \xtg\ on $\B$ which explains
  our terminology and notation for \xtg-algebras.  We will not make
  use of groupoid technology in this paper.}

\begin{lemma}
  \label{lem-genfixedalg}
  Let $B$ be a {strictly} proper \xtg-algebra.  Then the fibre over
  $G\cdot x$ of the $C_{0}(\gux)$-algebra $B^{G,\beta}$ is isomorphic
  to $B(x)^{G_{x},\beta^{x}}$.  In fact, we can realize $B^{G,\beta}$
  as
  \begin{multline}\label{eq:3}
    \bigl\{\,b\in\sa_{b}(X,\B):\text{$b(s\cdot
      x)=\beta^{x}_{s}\bigl(b(x)\bigr)$ for all $s\in G$ and $x\in X$,
      and }\\ \text{such that $G\cdot x\mapsto \|b(x)\|$ is in
      $C_{0}(\gux)$}\, \bigr\}.
  \end{multline}
  Then the isomorphism of the fibre of $B^{G,\beta}$ over $G\cdot x$
  with $B(x)^{G_{x},\beta^{x}}$ is given via evaluation at $x$.  If
  $\B$ is a continuous \cs-bundle, then $B^{G,\beta}$ is the section
  algebra of a continuous \cs-bundle over $\gux$.
\end{lemma}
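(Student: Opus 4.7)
My plan is to realize $B^{G,\beta}$ concretely as the algebra of equivariant bounded sections displayed in (\ref{eq:3}) by means of the diagonal evaluation map, then deduce from this both the fibre description and the continuous-bundle statement.

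First I would introduce the $*$-homomorphism $\pi_{\Delta}:C_{0}(X\times_{G,\beta}B)\to\sa_{b}(X,\B)$ defined by $\pi_{\Delta}(F)(x):=F(x)(x)$. Continuity of $\pi_{\Delta}(F)$ as a section follows from continuity of $F:X\to B$ together with the fact that evaluation at a moving point $x_n\to x$ is continuous on continuous sections of a usc bundle (using the estimate $\|F(x_n)(x_n)-F(x)(x_n)\|\le\|F(x_n)-F(x)\|_B$ and continuity of the section $F(x)$); the equivariance $F(s\cdot x)=\beta_{s}(F(x))$ combined with (\ref{eq:1}) yields $\pi_{\Delta}(F)(s\cdot x)=\beta_{x,s}(\pi_{\Delta}(F)(x))$, which is the equivariance appearing in (\ref{eq:3}); and $\|F(x)(x)\|\le\|F(x)\|$ forces the orbit function of $\|\pi_{\Delta}(F)\|$ into $C_{0}(\gux)$. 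Since the restriction of the $C_{0}(G\backslash(X\times X))$-algebra $C_{0}(X\times_{G,\beta}B)$ to $G\backslash\Delta(X)$ is by construction the quotient by the ideal of $F$ vanishing on the diagonal orbit, which standard $C_{0}(Y)$-algebra theory identifies with $\ker\pi_{\Delta}$, the map $\pi_{\Delta}$ descends to an injective $*$-homomorphism $\overline{\pi}_{\Delta}:B^{G,\beta}\hookrightarrow\sa_{b}(X,\B)$ whose image lies in (\ref{eq:3}).

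The heart of the argument is the reverse inclusion. Given $b$ in the right hand side of (\ref{eq:3}), I would first use a $C_{c}(\gux)$-approximate identity to reduce to the case of compact orbit support. In that case properness of the $G$-action provides a cutoff function $c\in C_{c}(X)^{+}$ with $D(x):=\int_{G}c(s\cdot x)^{2}\,ds>0$ on a neighbourhood of $\supp b$, so that
\begin{equation*}
  \rho(x,y):=\frac{1}{D(x)}\int_{G}c(s\cdot x)\,c(s\cdot y)\,ds
\end{equation*}
is continuous, diagonally $G$-invariant, satisfies $\rho(x,x)=1$, and has $y$-sections of compact support (since $y\in\supp\rho(x,\cdot)$ forces $s\cdot y\in\supp c$ for $s$ in the compact set $\{s:s\cdot x\in\supp c\}$, by properness). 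Setting $F(x)(y):=\rho(x,y)\,b(y)$ and zero off $G\cdot\supp b$, a direct computation using the $G$-invariance of $\rho$ and the equivariance of $b$ yields $F(s\cdot x)=\beta_{s}(F(x))$ in $B$, while $\rho(x,x)=1$ gives $\pi_{\Delta}(F)=b$; the compact orbit support ensures $F\in C_{0}(X\times_{G,\beta}B)$.

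Once (\ref{eq:3}) is established, the fibre of $B^{G,\beta}$ over $G\cdot x$ is the quotient by the ideal of elements of (\ref{eq:3}) vanishing on the orbit of $x$, which by equivariance coincides with $\{b:b(x)=0\}$; hence the fibre equals the image of evaluation at $x$, and specializing the equivariance to $s\in G_{x}$ shows this image lies in $B(x)^{G_{x},\beta^{x}}$. For the reverse containment, given $b_{0}\in B(x)^{G_{x},\beta^{x}}$, I would lift $b_{0}$ to $b_{1}\in B$ with $b_{1}(x)=b_{0}$, average $b_{1}$ over the compact group $G_{x}$ to obtain a $G_{x}$-fixed lift, and then apply the cutoff construction above to extend it to a section in (\ref{eq:3}) with value $b_0$ at $x$. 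Finally, if $\B$ is a continuous \cs-bundle, then $x\mapsto\|b(x)\|$ is continuous on $X$ for each $b\in B^{G,\beta}$; the isometry property of $\beta_{x,s}$ makes this function $G$-invariant, so it descends to a continuous function on $\gux$, which is precisely the continuous-bundle condition for $B^{G,\beta}$. The main obstacle is the surjectivity step: the cutoff construction of $\rho$, with the delicate renormalization $\rho(x,x)=1$, and the partition-of-unity reduction to the compact-orbit-support case are what make the equivariant extension of $b$ possible.
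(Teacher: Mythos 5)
Your proof has the same skeleton as the paper's --- the diagonal evaluation map, the averaging $\int_{G_{x}}\beta_{s}(b)\,ds$ over the compact stabilizer to produce a $G_{x}$-fixed lift of a prescribed element of $B(x)^{G_{x},\beta^{x}}$, and the reduction of the continuity statement to the $G$-invariance of $x\mapsto\|b(x)\|$ --- but it diverges at the key surjectivity step. The paper never constructs an explicit preimage of an element of \eqref{eq:3}: it observes that diagonal evaluation induces a $C_{0}(\gux)$-homomorphism $\psi$ of $B^{G,\beta}$ into the algebra $D$ defined by \eqref{eq:3}, identifies both fibres over $G\cdot x$ with $B(x)^{G_{x},\beta^{x}}$ (quoting \cite{echeme:em11}*{Lemma~2.2} for the fibre of the induced algebra), and then invokes the general principle that a $C_{0}(Y)$-morphism which is an isomorphism on every fibre is an isomorphism. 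Your cutoff construction is a legitimate, more self-contained alternative --- it is essentially how \cite{echeme:em11}*{Lemma~2.2} is itself proved --- but note that your identification of $\ker\pi_{\Delta}$ with the restriction ideal is not pure ``standard $C_{0}(Y)$-theory'' either: it requires the faithfulness of the concrete evaluation $F\mapsto F(x)(x)$ on the abstract fibre of $C_{0}(X\times_{G,\beta}B)$ over $[x,x]$, which is again exactly the content of the cited lemma. So your route trades one appeal to that lemma for another, plus the explicit construction.

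The construction itself has a genuine (though repairable) defect: the formula $F(x)(y):=\rho(x,y)b(y)$ need not define an element of $C_{0}(X\times_{G,\beta}B)$, because $\rho(x,\cdot)$ can blow up as $x$ approaches the boundary of the open set $\{D>0\}$. Concretely, take $G=\Z$ acting on $X=\R$ by translation with $B=C_{0}(\R)$, let $c$ be supported in $[0,1]$ and positive on $(0,1)$, and let $b$ be a bounded $\Z$-periodic section supported in $\bigcup_{n}[n+0.4,n+0.6]$ with $b(1/2)\neq 0$; then $D>0$ on a neighborhood of $\supp b$ as you require, but for $x\in(0,1)$ one computes $\rho(x,1/2)=c(1/2)/c(x)\to\infty$ as $x\to 0^{+}$, so $\|F(x)\|$ is unbounded. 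Declaring $F$ to be ``zero off $G\cdot\supp b$'' does not help: $\supp b$ is already $G$-invariant, the formula does not vanish for $x$ outside it, and the blow-up occurs precisely at such $x$. The fix is to insert a cutoff in the first variable: choose $\phi\in C_{c}(\gux)$ with $\phi\equiv 1$ on the (compact) image of $\supp b$ and $\supp\phi$ contained in the image of $\{D>0\}$, and set $F(x)(y):=\phi(G\cdot x)\rho(x,y)b(y)$. This preserves equivariance and still gives $\pi_{\Delta}(F)=b$, while confining the first variable to a region where $\rho$ is continuous with locally uniform control, so that $F$ really lies in $C_{0}(X\times_{G,\beta}B)$. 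The same caveat applies to the extension step in your fibre computation. With that correction your argument goes through.
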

\begin{proof}
  {By \cite[Lemma 2.2]{echeme:em11}, the fibre
    $C_{0}(X\times_{G,\beta}B)(G\cdot x)$ is isomorphic to the fixed
    point algebra $B^{G_{x},\beta}$ via evaluation at $x$.  On the
    other hand, the $(G\cdot x)$-fibre of $B^{G,\beta}$ is
    $\set{F(x)(x):F\in C_{0}(X\times_{G,\beta}B)}$.  Since
    $B^{G_{x},\beta}= \set{F(x):F\in C_{0}(X\times_{G,\beta}B)}$, we
    just need to see that evaluation at $x$ takes $B^{G_{x},\beta}$
    onto $B(x)^{G_{x},\beta^{x}}$.  (It clearly takes values in
    $B(x)^{G_{x},\beta^{x}}$.)  However, if $b_{x}\in
    B(x)^{G_{x},\beta^{x}}$, then there is a $b\in B$ such that
    $b(x)=b_{x}$.  But if $s\in G_{x}$, then
    $\beta_{s}(b)(x)=(\beta^{x})_{g}(b(x))=b_{x}$.  Thus we can let
    $\tilde b=\int_{G_{x}}\beta_{s}(b)\,ds$.  Then $\tilde b\in
    B^{G_{x},\beta}$ and $\tilde b(x)=b_{x}$.

    Clearly, we can identify $C_{0}(X\times_{G,\beta}B)$ with
    continuous sections of the \cs-bundle $X\times\B$ over $X\times X$
    such that
    \begin{equation}
      \label{eq:4}
      F(s\cdot x,s\cdot y)=\beta_{s}\bigl(F(x,\cdot)\bigr)(s\cdot y)
      =\beta_{y,s}\bigl(F(x,y)\bigr) \quad\text{and such that}
    \end{equation}
    $G\cdot x\mapsto \sup_{y}\|F(x,y)\|$ vanishes at infinity on
    $\gux$.

    On the other hand, \eqref{eq:3} is a closed
    $C_{0}(\gux)$-subalgebra $D$ of $\sa_{b}(X,\B)$.  The map sending
    $F$ to $(x\mapsto F(x,x))$ defines a homomorphism of
    $C_{0}(X\times_{G,\beta}B)$ into $D$ which factors through a
    $C_{0}(\gux)$-homomorphism $\psi$ of $B^{G,\beta}$ into $D$.  Then
    $\psi$ induces a map of the fibre $B^{G,\beta}(G\cdot x)$ into
    $D(G\cdot x)$.  Evaluation at $x$ clearly identifies $D(G\cdot x)$
    with a subalgebra of $B(x)^{G_{x},\beta^{x}}$, and the first part
    of the lemma implies the image is all of $B(x)^{G_{x},\beta^{x}}$
    and that $\psi$ induces an isomorphism of the fibres over $G\cdot
    x$.  Hence $\psi$ is an isomorphism so that we can identify $D$
    with $B^{G,\beta}$ as claimed.

    To verify the statement about continuous bundles, it suffices to
    see that $G\cdot x \mapsto \|b(x)\|$ is continuous for $b$ in
    \eqref{eq:3}.  But this is automatic if $\B$ is a continuous
    bundle.}
\end{proof}

If $B$ is a {strictly} proper \xtg-algebra, then so is $B\tensor
\kltg$ with $C_{0}(X)$ acting on the first factor and with respect to
the diagonal {$G$-}action $\beta\tensor\Ad\rho$, where $\rho:G\to
U\bigl(L^{2}(G)\bigr)$ is the right regular representation.  Clearly,
$B\tensor \kltg$ is the section algebra {of} a bundle $\B\tensor
\kltg$ over $X$ built from the fibres $B(x)\tensor \kltg$ and for
which $x\mapsto b(x)\tensor T$ is a continuous section for every $b\in
B$ {and $T\in \kltg$}.  Since we can identify $\Prim B$ and $\Prim
(B\tensor\kltg)$, it follows from \cite{kirwas:ma95} or
\cite{wil:crossed}*{Theorem~C.26} that $\B\tensor\kltg$ is continuous
when $\B$ is.  Hence we can quote \cite{echeme:em11}*{Theorem~2.14} to
deduce the following.
\begin{thm}[\cite{echeme:em11}*{Theorem~2.14}]
  \label{thm-EE2.14}
  Let $B$ be a {strictly} proper \xtg-algebra.  Then
  \mbox{$B\rtimes_{\beta}G$} is isomorphic to the generalized fixed
  point algebra $\bigl(B\tensor\kltg\bigr)^{G,\beta\tensor\Ad\rho}$,
  which is a $C_{0}(\gux)$-algebra with fibres over $G\cdot x$
  isomorphic to $\bigl(B(x)\tensor
  \kltg\bigr)^{G_{x},\beta^{x}\tensor\Ad\rho}$.  Moreover if $B$ is
  (the section algebra of a) continuous bundle of \cs-algebras, then
  so is $\bigl(B\tensor\kltg\bigr)^{G,\beta\tensor\Ad\rho}$.
\end{thm}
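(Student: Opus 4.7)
The plan is to assemble three ingredients: the preceding paragraph's identification of $B\tensor\kltg$ as a strictly proper \xtg-algebra, the quoted \cite{echeme:em11}*{Theorem~2.14}, and our Lemma~\ref{lem-genfixedalg}. Since the statement here is advertised as a direct consequence of the cited Echterhoff--Emerson theorem together with the bundle lemma, the work is almost entirely one of verifying hypotheses.

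First I would invoke \cite{echeme:em11}*{Theorem~2.14} to obtain the isomorphism $B\rtimes_{\beta}G\cong(B\tensor\kltg)^{G,\beta\tensor\Ad\rho}$. Its hypotheses are already verified in the discussion immediately preceding the present statement: $C_{0}(X)$ acts on the first tensor factor of $B\tensor\kltg$, the unitary component $\Ad\rho$ of the diagonal action is trivial on this $C_{0}(X)$-subalgebra, so the structure map remains $G$-equivariant, and the $G$-action on $X$ is strictly proper by hypothesis. Hence $B\tensor\kltg$ is indeed a strictly proper \xtg-algebra and the cited theorem applies.

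Next, for the fibre description, I would apply Lemma~\ref{lem-genfixedalg} to the strictly proper \xtg-algebra $B\tensor\kltg$. The lemma identifies the fibre of $(B\tensor\kltg)^{G,\beta\tensor\Ad\rho}$ over $G\cdot x$ with the fixed point algebra of the fibre, namely $\bigl((B\tensor\kltg)(x)\bigr)^{G_{x},(\beta\tensor\Ad\rho)^{x}}$. But the fibre at $x$ of the bundle $\B\tensor\kltg$ is canonically $B(x)\tensor\kltg$, and the induced $G_{x}$-action on this fibre is $\beta^{x}\tensor\Ad\rho$ --- this is just the definition of the diagonal action on fibres combined with \eqref{eq:1}. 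This yields the claimed description.

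Finally, the continuous-bundle assertion reduces to showing that $\B\tensor\kltg$ is a continuous \cs-bundle over $X$ whenever $\B$ is, and then invoking the last clause of Lemma~\ref{lem-genfixedalg}. Continuity of the tensored bundle is already established in the paragraph preceding the theorem via the identification $\Prim B\cong\Prim(B\tensor\kltg)$ together with \cite{kirwas:ma95} or \cite{wil:crossed}*{Theorem~C.26}. The substantive content of the theorem is thus imported from \cite{echeme:em11}*{Theorem~2.14} and from the stability-under-tensoring fact for continuous bundles; the only real obstacle in the present argument is bookkeeping, namely making sure that $B\tensor\kltg$ simultaneously satisfies the hypotheses of both cited results, which the preceding paragraph handles.
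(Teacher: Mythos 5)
Your proposal is correct and follows essentially the same route as the paper: the paper's entire argument is the paragraph preceding the theorem, which verifies that $B\tensor\kltg$ is a strictly proper \xtg-algebra, establishes continuity of the bundle $\B\tensor\kltg$ via the identification of $\Prim B$ with $\Prim(B\tensor\kltg)$ and \cite{kirwas:ma95} (or \cite{wil:crossed}*{Theorem~C.26}), and then quotes \cite{echeme:em11}*{Theorem~2.14}, with the fibre description supplied by Lemma~\ref{lem-genfixedalg}. Your write-up just makes the hypothesis-checking explicit; there is nothing to add or correct.
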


\section{Proper Actions on \cs-Alebras with Continuous Trace}
\label{sec:proper-actions-cs}

Now we restrict to the situation where $B$ is a continuous-trace
\cs-algebra with spectrum $X$.  Then we may assume that $B$ is the
section algebra $\sa_{0}(X,\B)$ of a \emph{continuous} \cs-bundle $\B$
with fibres $B(x)=\K(\H_{x})$ for Hilbert spaces $\H_{x}$.  We will
also assume that we have a dynamical system $\beta:G\to \Aut B$ such
that the induced action of $G$ on $X$ is proper.  In the sequel, we
will simply say that \emph{$G$ acts {strictly} properly on $B$}.  We
use the term ``strictly properly'' to make sure that our notion is not
mistaken with the much weaker notion of proper actions on C*-algebras
due to Rieffel  \cite{rie:pm88,rie:em04}.

If $\H$ is a Hilbert space, then $U(\H)$ will denote the group of
unitary operators on $\H$ endowed with the strong operator topology.
Then $\Aut \K(\H)$ can be identified with the projective unitary group
$U(\H)/\T1$ {in} a canonical way, and we can view
$\beta^{x}:G_{x}\to\Aut B(x)$ as a continuous homomorphism of $G_{x}$
into $PU(\H_{x})$.  (Thus, $\beta^{x}$ is often called a
\emph{projective representation} of $G_{x}$ on $\H_{x}$.\footnote{For
  an elementary discussion of projective representations, see
  \cite{wil:crossed}*{\S D.2}.}  Since each $\H_{x}$ is separable,
there is a Borel map $V_{x}:G_{x}\to U(\H_{x})$ such that
$\beta^{x}=\Ad V_{x}$.  Our choice of $V_{x}$ determines a Borel
$2$-cocycle $\omega_{x}:G\times G\to \T$ such that
\begin{equation}
  \label{eq:5}
  V_{x}(s)V_{x}(t)=\omega_{x}(s,t)V_{x}(st)\quad\text{for all $s,t\in
    G_{x}$.} 
\end{equation}
Assuming, as we do, that we have arranged that
${V_{x}}(e)=1_{\H_{x}}$, then our cocycle is \emph{normalized} in that
$\omega_{x}(e,s)=1=\omega_{x}(s,e)$ for all $s\in G$.  A normalized
2-cocycle is called a \emph{multiplier} on $G$.  The class
$[\omega_{x}]\in H^{2}(G_{x}\,T)$ depends only on $\beta_{x}$ and
represents the obstruction to $\beta_{x}$ being implemented by a
unitary representation of $G_{x}$ on $\H_{x}$.  It is called the
\emph{Mackey obstruction} at $x$.

Our task in this section is to examine the structure of the fibres
$\bigl(B(x) \tensor {\kltg}\bigr)^{G_{x},\beta^{x}\tensor\Ad\rho}$ of
$\bigl(B\tensor\kltg\bigr)^{G,\beta\tensor\Ad\rho}$. Then if
$B(x)=\K(\H_{x})$ and $\beta^{x}=\Ad V_{x}$ as above, we want to
examine $ \K\bigl({\H_{x}}\tensor
L^{2}(G)\bigr)^{G_{x},\Ad(V_{x}\tensor \rho)}$.  Thus, dropping the
``$x$'', we want to consider the fixed point algebra
\begin{equation*}
  \K\bigl({\H}\tensor L^{2}(G)\bigr) ^{K,\Ad(V\tensor\rho)}
\end{equation*}
where $K$ is a compact subgroup of $G$, and where $V:K\to U(\H)$ is a
Borel lift of a projective representation $\beta:K\to PU(\H)$.  We let
$\omega\in H^{2}(K,\T)$ be the multiplier associated to $V$ as in
\eqref{eq:5}.

A Borel map $\sigma:K\to U(\H_{\sigma})$ which satisfies
$\sigma(s)\sigma(t)=\omega(s,t)\sigma(st)$ is called {\emph{a
    multiplier representation with multiplier $\omega$} or, more
  simply, an \emph{$\omega$-representation}} of $K$. We have two
canonical $\omega$-representations of $K$ on $L^{2}(K)$, called the
left- and right-regular $\omega$-representations, which are given,
respectively, by
\begin{equation*}
  \lambda_{K}^{\omega}(s)\xi(t)=\omega(s,s^{-1}t)\xi(s^{-1}t)\quad\text{and}
  \quad \rho_{K}^{\omega}(s)\xi(t)=\omega(t,s)\xi(ts).
\end{equation*}
The multiplier $\omega$ determines {a} group structure on $K\times\T$:
\begin{equation*}
  (s,z)(t,w)=(st,\omega(s,t)zw)\quad\text{and}\quad(s,z)^{-1}=(s^{-1},
  \overline{z\omega(s,s^{-1})}). 
\end{equation*}
A nontrivial result due to Mackey implies that the resulting group,
denoted by $K\times_{\omega}\T$, has a compact topology {(since $K$ is
  compact)} such that $K\times_{\omega}\T$ is a central extension of
$K$ by $\T$. (For references and more details, see page~376 of
\cite{wil:crossed}.)  Furthermore, the $\omega$-representations of $K$
are then in one-to-one correspondence with the unitary representations
of $K\times_{\omega}\T$ whose restriction to $\T$ is a multiple of the
identity character $z\mapsto z$ (see, for example,
\cite{wil:crossed}*{Proposition~D.28}).
% Thus if $\sigma$ is a $\omega$ representation of $K$, the
% corresponding unitary representation $\tilde \sigma$ of
% $K\times_{\omega}\T$ is given by $\tilde \sigma(s,z)=z\sigma(s)$.
% On the other hand, if $\tilde \sigma$ is an appropriate unitary
% representation of $K\times_{\omega}\T$, then the corresponding
% $\omega$-representation is given by $\sigma(s)=\tilde\sigma(s,1)$.
In a similar way, we get a correspondence between the
$\bar\omega$-representations of $K$ and the unitary representations of
$K\times_{\omega}\T$ that restrict to a multiple of the character
$z\mapsto \bar z$ on $\T$.  Since $K\times_{\omega}\T$ is compact,
every irreducible $\omega$-representation of $K$ is
finite-dimensional.  We will write $\widehat K_{\omega}$ for the set
of irreducible $\omega$-representations.

\begin{remark}
  \label{rem-basic-omega}
  Using the above correspondence and the well-known theory of unitary
  representations of compact groups (for example, see
  \cite{deiech:principles09}*{\S\S7.2--3}), we see that the space
  $\H_{W}$ of any $\omega$-representation $W:K\to U(\H_{W})$ on a
  separable Hilbert space decomposes as
  \begin{equation*}
    \H_{W}=\bigoplus_{\sigma\in\widehat K_{\omega}}\H(\sigma),
  \end{equation*}
  where $\H(\sigma)$ is the \emph{isotope} of $\sigma$.  That is,
  $\H(\sigma)$ is the union of closed $K$-invariant subspaces $\H'$ of
  $\H_{W}$ such that the restriction of $W$ to $\H'$ is equivalent to
  $\sigma$.  Then each $\H(\sigma)$ decomposes as a tensor product
  $\H'_{\sigma} \tensor \H_{\sigma}$ in such a way that
  \begin{equation*}
    \H_{W}\cong \bigoplus_{\sigma\in\widehat K_{\omega}}\H'_{\sigma}\tensor
    \H_{\sigma} \quad\text{and}\quad W\cong \bigoplus_{\sigma\in \widehat
      K_{\sigma}} 1_{\H'_{\sigma}}\tensor \sigma.
  \end{equation*}
  Then, and this is the point, the fixed point algebra is given by
  \begin{equation*}
    \K(\H_{W})^{K,\Ad W} \cong \bigoplus_{\sigma\in\widehat K_{\omega}}
    \K(\H'_{\sigma}) \tensor 1_{\H_{\sigma}}.
  \end{equation*}
\end{remark}

Therefore to understand the structure of $\K\bigl(\H\tensor
L^{2}(G)\bigr)^{K,\Ad V\tensor\rho}$, we need to decompose the Hilbert
space $\H\tensor L^{2}(G)$ into isotopes as above via an analogue of
the Peter-Weyl Theorem for compact groups.  Although the result is
certainly well-known, we include a statement and proof as we lack a
direct reference.  First, for any $\omega$-representation $\sigma:K\to
U(\H_{\sigma})$, we write $\H_{\sigma}^{*}$ for the conjugate Hilbert
space and let $\sigma^{*}:K\to U(\H_{\sigma}^{*})$ be the conjugate
representation of $\sigma$:
${\sigma^{*}(s)}(v^{*})=\bigl(\sigma(s)(v)\bigr)^{*}$.  Then
$\sigma^{*}$ is a $\bar\omega$-representation and the map
$\sigma\mapsto\sigma^{*}$ is a bijection between $\widehat K_{\omega}$
and $\widehat K_{\bar\omega}$.  There is a unique linear map
\begin{equation}
  \label{eq:7}
  \H_{\sigma}\tensor\H_{\sigma}^{*}\to L^{2}(K),
\end{equation}
sending the elementary tensor $v\tensor w^{*}$ to $g^{\sigma}_{v,w}$,
where $g_{v,w}^{\sigma}(k)=\sqrt{d_{\sigma}}\bip(v|\sigma(k)w)$ and
$d_{\sigma}=\dim\H_{\sigma}$.

\begin{lemma}[Peter-Weyl]
  \label{lem-peter-weyl}
  Let $\omega$ be a multiplier on the compact group $K$.  Then the
  maps \eqref{eq:7} above induce a Hilbert-space isomorphism of
  \begin{equation*}
    \bigoplus_{\sigma\in \widehat K_{\omega}}
    \H_{\sigma}\tensor\H_{\sigma}^{*} \quad\text{with}\quad L^{2}(K)
  \end{equation*}
  which intertwines the representation $\bigoplus_{\sigma}
  \sigma\tensor 1_{\H_{\sigma}^{*}} $ with the left regular
  $\omega$-representation $\lambda_{K}^{\omega}$, as well as the
  representation $\bigoplus_{\sigma} 1_{\H_{\sigma}}\tensor
  \sigma^{*}$ with the right regular
  $\bar\omega$-representation~$\rho_{K}^{\bar\omega}$.
\end{lemma}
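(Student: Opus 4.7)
The strategy is to reduce to the classical Peter--Weyl theorem applied to the compact central extension $\widetilde K := K \times_{\omega} \T$. Recall from the preceding discussion that $\omega$-representations $\sigma$ of $K$ correspond bijectively, via $\tilde\sigma(s,z) := z\sigma(s)$, to the irreducible unitary representations $\tilde\sigma \in \widehat{\widetilde K}$ satisfying $\tilde\sigma(e,z) = z\cdot\mathrm{id}$. Fixing the normalized product Haar measure on $\widetilde K$, the map $h \mapsto \tilde h$ defined by $\tilde h(s,z) := \bar z\, h(s)$ is an isometric embedding of $L^{2}(K)$ onto the closed subspace $L^{2}(\widetilde K)_{\bar\chi} \subseteq L^{2}(\widetilde K)$ on which the right-regular $\T$-action is by the character $\bar\chi$. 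A direct calculation using the cocycle identity shows that this embedding intertwines $\lambda^{\omega}_{K}$ and $\rho^{\bar\omega}_{K}$ with the restrictions to $L^{2}(\widetilde K)_{\bar\chi}$ of the left and right regular representations of $\widetilde K$, evaluated at elements of the form $(t,1)$.

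Applying the classical Peter--Weyl theorem to the compact group $\widetilde K$ yields
\[
L^{2}(\widetilde K) \cong \bigoplus_{\tilde\pi \in \widehat{\widetilde K}} \H_{\tilde\pi} \otimes \H_{\tilde\pi}^{*}
\]
via the matrix-coefficient isomorphism, intertwining the left regular representation with $\bigoplus \tilde\pi \otimes 1$ and the right regular representation with $\bigoplus 1 \otimes \tilde\pi^{*}$. Intersecting with $L^{2}(\widetilde K)_{\bar\chi}$ selects exactly those summands for which $\tilde\pi^{*}|_{\T}$ contains $\bar\chi$, equivalently those $\tilde\pi$ with $\tilde\pi(e,z) = z\cdot\mathrm{id}$ --- precisely the lifts $\tilde\sigma$ of $\sigma \in \widehat K_{\omega}$. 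Identifying $\H_{\tilde\sigma} = \H_{\sigma}$ and $d_{\tilde\sigma} = d_{\sigma}$, and noting that $\tilde\sigma^{*}$ at $(t,1)$ is exactly the $\bar\omega$-representation $\sigma^{*}$ on $\H_{\sigma}^{*}$, the restricted decomposition reads
\[
L^{2}(K) \cong \bigoplus_{\sigma \in \widehat K_{\omega}} \H_{\sigma} \otimes \H_{\sigma}^{*}.
\]
Under the identification $\tilde h \leftrightarrow h$, the $\widetilde K$-matrix coefficient $\sqrt{d_{\sigma}}\,\bip(v|\tilde\sigma(s,z)w) = \sqrt{d_{\sigma}}\,\bar z\,\bip(v|\sigma(s)w)$ descends to $g^{\sigma}_{v,w}(s) = \sqrt{d_{\sigma}}\,\bip(v|\sigma(s)w)$, and the claimed intertwining of $\lambda^{\omega}_{K}$ with $\bigoplus \sigma \otimes 1$ and of $\rho^{\bar\omega}_{K}$ with $\bigoplus 1 \otimes \sigma^{*}$ follows from the analogous intertwinings on $\widetilde K$. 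The factor $\sqrt{d_{\sigma}}$ is precisely the Plancherel weight inherited from Peter--Weyl on $\widetilde K$, using $d_{\tilde\sigma} = d_{\sigma}$ and the product structure of Haar measure.

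The argument requires no further ingredient beyond the classical Peter--Weyl theorem and the central-extension correspondence already recalled. The main technical obstacle is purely the bookkeeping of conventions: choosing the correct $\T$-character ($\bar\chi$ vs.\ $\chi$) and side (left- vs.\ right-$\T$-action) for the isotypic restriction so as to recover the formulas for $\lambda^{\omega}_{K}$ and $\rho^{\bar\omega}_{K}$. The asymmetric appearance of $\omega$ on the left and $\bar\omega$ on the right in the conclusion is forced by the fact that $\sigma^{*}$ is a $\bar\omega$-representation and must occupy the second tensor slot $\H_{\sigma}^{*}$.
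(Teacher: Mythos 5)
Your proposal is correct and follows essentially the same route as the paper: both reduce to the classical Peter--Weyl theorem for the compact central extension $K\times_{\omega}\T$ and extract the isotypic component on which the central $\T$ acts by (the conjugate of) the identity character, which picks out exactly the lifts of $\sigma\in\widehat K_{\omega}$. The only cosmetic difference is that you obtain the intertwining of $\lambda_{K}^{\omega}$ and $\rho_{K}^{\bar\omega}$ from the equivariance of the embedding $h\mapsto\tilde h$ into $L^{2}(K\times_{\omega}\T)$, whereas the paper verifies the left-regular intertwining by a direct cocycle computation on the matrix coefficients $g^{\sigma}_{v,w}$.
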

\begin{proof}
  As we will see, the result follows readily from the classical
  Peter-Weyl Theorem for the compact group $K\times_{\omega}\T$ which
  gives us the decomposition
  \begin{equation*}
    L^{2}(K\times_{\omega}\T)\cong
    \bigoplus_{\tau\in\spec{K\times_{\omega}\T}} \H_{\tau}\tensor\H_{\tau}^{*}
  \end{equation*}
  in which an elementary tensor $v\tensor w^{*}$ in $\H_{\tau}\tensor
  \H_{\tau}^{*}$ corresponds to the function $f_{v,w}^{\tau}$ given by
  \begin{equation*}
    f_{v,w}^{\tau}(k,z):=\sqrt{d_{\tau}}\bip(v|\tau(k,z)w).
  \end{equation*}
  Since $\T$ is central in $K\times_{\omega}\T$, the restriction to
  $\T$ of any irreducible representation $\tau$ of
  $K\times_{\omega}\T$ must be a multiple of a character
  $\chi_{n}\in\widehat\T$ of the form $\chi_{n}(z)=z^{n}$ (with
  $n\in\Z$).  Thus
  $\tau(k,z)=\tau\bigl((e,z)(k,1)\bigr)=\chi_{n}(z)\tau(k,1)$, and
  \begin{equation*}
    f_{v,w}^{\tau}(k,z)=\bar\chi_{n}(z)g^{\tau}_{v,w}(k)\quad\text{where}\quad
    g^{\tau}_{v,w}(k):=\sqrt{d_{\tau}}\bip(v|\tau(k,1)w).
  \end{equation*}
  Recall that if $\set{v^{\tau}_{1},\dots,v^{\tau}_{n_{\tau}}}$ is a
  basis for $\H_{\tau}$ if we let
  $f^{\tau}_{ij}=f^{\tau}_{v_{i},v_{j}}$, then
  \begin{equation*}
    \set{f^{\tau}_{ij}:\text{$\tau\in\spec{K\times_{\omega}\T}$ and
        $1\le i,j\le n_{\tau}$}}
  \end{equation*}
  is an orthonormal basis for $L^{2}(K\times_{\omega}\T)$.  Thus if we
  let $g_{ij}^{\tau}(k)=f_{ij}^{\tau}(k,1)$ and let
  $\spec{K\times_{\omega}\T}_{n}$ be the collection of $\tau$
  restricting to a multiple of $\chi_{n}$ on $\T$, then we get an
  orthonormal basis of $L^{2}(K)\tensor L^{2}(\T) \cong
  L^{2}(K\times_{\omega}{\T})$ of the form
  \begin{equation*}
    \bigcup_{n\in\Z}\set{g_{ij}^{\tau}\tensor\bar\chi_{n}:\text{$\tau\in
        \spec{K\times_{\omega}\tau}_{n}$ and $1\le i,j\le n_{\tau}$}}.
  \end{equation*}
  Since $\set{\bar\chi_{n}:n\in\Z}$ is an orthonormal basis for
  $L^{2}(\T)$, we can conclude that for each $n\in\Z$,
  \begin{equation*}
    \set{g_{ij}^{\tau}:\text{$\tau\in\spec{K\times_{\omega}\T}_{n}$ and
        $1\le i,j\le n_{\tau}$}}
  \end{equation*}
  is an orthonormal basis for $L^{2}(K)$.  In the case, $n=1$ we get a
  decomposition as in the lemma.

  We still need to see that this isomorphism intertwines the given
  representations.  But the computation
  \begin{align*}
    \bigl(\lambda_{K}^{\omega}(l)g^{\sigma}_{v,w}\bigr) (k) &=
    \omega(l,l^{-1}k)g_{v,w} ^{\sigma}(l^{-1}k) \\
    &= \omega(l,l^{-1}k) \sqrt{d_{\sigma}} \bip(v|\sigma(l^{-1}k)w) \\
    \intertext{which, since $\sigma$ is an $\omega$-representation,
      is} &= \omega(l,l^{-1}k)\omega(l^{-1},k) \sqrt{d_{\sigma}}
    \bip(v|\sigma(l^{-1}) \sigma(k) w) \\
    \intertext{which, since
      $\sigma(l^{-1})=\omega(l,l^{-1})\sigma(l)^{*}$, is} &=
    \omega(l,l^{-1}k)\omega(l^{-1},k) \bar\omega(l,l^{-1})
    \sqrt{d_{\sigma}} \bip(v|\sigma(l)^{*} \sigma(k) w) \\
    \intertext{which, since the cocycle identity implies
      $\omega(l,l^{-1}k)\omega(l^{-1},k)
      \bar\omega(l,l^{-1})=\omega(e,k)=1$, is}
    &= \sqrt{d_{\sigma}} \bip(\sigma(l)v|{\sigma(k)w})\\
    &= g_{\sigma(l)v,w}^{\sigma}(k)
  \end{align*}
  shows that this isomorphism intertwines $\bigoplus_{\sigma}
  \sigma\tensor 1_{\H_{\sigma}^{*}}$ and the left-regular
  $\omega$-representation $\lambda_{K}^{\omega}$.  The computation
  involving the right-regular $\omega$ representation is similar, but
  less messy.
\end{proof}

\begin{remark}\label{rem-switch}
  We can apply the above lemma to $\bar\omega$ and then use the
  bijection between $\widehat K_{\omega}$ and $\widehat
  K_{\bar\omega}$ to obtain a decomposition $L^{2}(K)$ with
  $\bigoplus_{\sigma\in \widehat K_{\omega}} \H_{\sigma}^{*}\tensor
  \H_{\sigma}$ which intertwines $\bigoplus _{\sigma}
  \sigma^{*}\tensor 1_{\H_{\sigma}}$ with $\lambda_{K}^{\bar\omega}$
  and $\bigoplus_{\sigma} 1_{\H_{\sigma}^{*}}\tensor \sigma$ with
  $\rho_{K}^{\omega}$.  This is the decomposition we will employ
  below.
\end{remark}

We will need a twisted version of the absorption principle for regular
representations of $G$.  The proof is a straightforward calculation
which we omit.
\begin{lemma}
  \label{lem-absorbtion}
  Suppose that $\omega$ and $\mu$ are multipliers on a locally compact
  group $G$.  Let $V:G\to U(\H)$ be an $\omega$-representation of $G$
  and $W:L^{2}(G,\H)\to L^{2}(G,\H)$ the unitary operator given by
  $(W\xi)(s)=V_{s} \xi(s)$ for $\xi\in L^{2}(G,\H)$ and $s\in G$.
  Then for all $s\in G$,
  \begin{equation*}
    W(V\tensor \rho_{G}^{\mu})(s)W^{*} =
    1_{\H}\tensor\rho_{G}^{\omega\mu}(s) \quad\text{and}\quad
    {W^*}(V\tensor \lambda_{G}^{\mu})(s)W=1_{\H}\tensor
    \lambda_{G}^{\omega\mu}(s). 
  \end{equation*}
  In particular, $V\tensor \rho_{G}\cong 1_{\H}\tensor
  \rho_{G}^{\omega}$ and $V\tensor \lambda_{G}\cong 1_{H}\tensor
  \lambda_{G}^{\omega}$.
\end{lemma}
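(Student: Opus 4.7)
The plan is a direct verification via the natural identification $L^{2}(G,\H)\cong \H\tensor L^{2}(G)$, under which an elementary tensor $v\tensor \xi$ corresponds to the vector-valued function $\eta(t)=\xi(t)v$. First I would check that $W$ is in fact unitary: since $V_{s}$ is unitary for every $s$, pointwise multiplication by the field $s\mapsto V_{s}$ preserves the $L^{2}$-norm, and the adjoint is $(W^{*}\eta)(s)=V_{s}^{*}\eta(s)$.

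Next I would translate the two representations into the $L^{2}(G,\H)$ picture. A routine check on elementary tensors shows that
\begin{equation*}
  \bigl((V\tensor\rho_{G}^{\mu})(s)\eta\bigr)(t)=\mu(t,s)V_{s}\eta(ts),
  \qquad
  \bigl((V\tensor\lambda_{G}^{\mu})(s)\eta\bigr)(t)=\mu(s,s^{-1}t)V_{s}\eta(s^{-1}t).
\end{equation*}
Conjugating the first by $W$ and using $(W^{*}\eta)(t)=V_{t}^{*}\eta(t)$, I get
\begin{equation*}
  \bigl(W(V\tensor\rho_{G}^{\mu})(s)W^{*}\eta\bigr)(t)
  =\mu(t,s)V_{t}V_{s}V_{ts}^{*}\eta(ts).
\end{equation*}
Now the key algebraic step: because $V$ is an $\omega$-representation, $V_{t}V_{s}=\omega(t,s)V_{ts}$, so $V_{t}V_{s}V_{ts}^{*}=\omega(t,s)\mathbf{1}_{\H}$, and the right-hand side becomes $(\omega\mu)(t,s)\eta(ts)$, which is exactly $\bigl((\mathbf{1}_{\H}\tensor\rho_{G}^{\omega\mu})(s)\eta\bigr)(t)$.

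The left-regular identity is symmetric. Conjugating $(V\tensor\lambda_{G}^{\mu})(s)$ by $W^{*}$ on the left and $W$ on the right produces $\mu(s,s^{-1}t)V_{t}^{*}V_{s}V_{s^{-1}t}\eta(s^{-1}t)$; from $V_{s}V_{s^{-1}t}=\omega(s,s^{-1}t)V_{t}$ we get $V_{t}^{*}V_{s}V_{s^{-1}t}=\omega(s,s^{-1}t)\mathbf{1}_{\H}$, yielding $\mathbf{1}_{\H}\tensor\lambda_{G}^{\omega\mu}(s)$. The ``in particular'' statement then follows by specializing $\mu$ to the trivial multiplier.

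There is no real obstacle here: the only substantive input is the defining cocycle identity for $V$, and the calculation is essentially forced once one writes everything in the $L^{2}(G,\H)$ model. The one small thing to be careful about is the convention for $\rho_{G}^{\mu}$ and $\lambda_{G}^{\mu}$ used in the paper (as displayed earlier in \S2), to ensure the cocycle factors land in the right places; with those conventions fixed, the $\omega$- and $\mu$-factors combine cleanly to $\omega\mu$ without any extra conjugates or inverse-multiplier corrections.
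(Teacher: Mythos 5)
Your calculation is correct and is precisely the ``straightforward calculation'' that the paper omits: both conjugation identities follow from the cocycle relation $V_tV_s=\omega(t,s)V_{ts}$ exactly as you write, with the paper's conventions $\rho_G^\mu(s)\xi(t)=\mu(t,s)\xi(ts)$ and $\lambda_G^\mu(s)\xi(t)=\mu(s,s^{-1}t)\xi(s^{-1}t)$ making the $\omega$- and $\mu$-factors combine to $\omega\mu$ with no conjugates. Nothing further is needed.
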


Suppose now that $K$ is a compact subgroup of a locally compact group
$G$.  If $\tau$ is a unitary representation of $K$ on $\H_{\tau}$,
then the induced representation $U^{\tau}$ acts by left-translation on
the Hilbert space $L^{2}(G\times_{K,\tau}\H)$ of (almost everywhere
equivalence classes of) square integrable functions from $G$ to
$\H_{\tau}$ satisfying $\xi(sk)=\tau(k^{-1})\bigl(\xi(s)\bigr)$ for
all $s\in G$ and $k\in K$.  Alternatively, we can realize
$L^{2}(G\times_{G,\tau}\H)$ as $L^{2}$-sections of the vector bundle
$K\backslash (G\times\H_{\tau})$ for the diagonal action $k\cdot
(g,v)=(gk^{-1},\tau(k)(v))$.  In particular, the isomorphism
$U:L^{2}(G)\to L^{2}(G\times_{G,\lambda_{K}}L^{2}(K))$, given by
$(U\xi)(s)(k)=\xi(sk)$ for $s\in G$ and $k\in K$, intertwines the
left-regular representation $\lambda_{G}$ of $G$ with
$U^{\lambda_{K}}$.  It also intertwines the restriction of the
right-regular representation $\rho_{G}\restr K$ with the pointwise
action of the {right regular representation} $\rho_{K}$ {of $K$} on
the elements of $L^{2}(G\times_{G,\lambda_{K}}L^{2}(K))$.

Thus if $V:K\to U(\H)$ is an $\omega$-representation, we get a unitary
\begin{equation*}
  \tilde U:\H\tensor L^{2}(G)\to L^{2}\big(G\times_{K,1_{\H}\tensor
    \lambda_{K}}(\H\tensor L^{2}(K))\big)
\end{equation*}
which intertwines the representation $1_{\H}\tensor \lambda_{G}$ of
$G$ on $\H\tensor L^{2}(G)$ with the induced representation
$U^{1_{\H}\tensor \lambda_{K}}$, as well as the representation
$V\tensor \rho_{G}\restr K$ with the representation of $K$ given by
the pointwise action of $V\tensor \rho_{K}$ on elements of
$L^{2}\big(G\times_{K,1_{\H}\tensor \lambda_{K}}{(\H\otimes
  L^{2}(K))}\big)$.

Let $W$ be the unitary on $L^{2}(K,\H)$ from
Lemma~\ref{lem-absorbtion} which intertwines $1_{\H}\tensor
\lambda_{K}$ with $V\tensor\lambda_{K}^{\bar\omega}$ and $V\tensor
\rho_{K}$ with $1_{\H}\tensor \rho_{K}^{\omega}$.  Then, after
identifying $\H\tensor L^{2}(K)$ with $L^{2}(K,\H)$, we can apply $W$
pointwise to elements in $L^{2}(G\times_{K,1_{\H}\tensor
  \lambda_{K}}(\H\tensor L^{2}(K)))$ to obtain a unitary
\begin{equation*}
  \tilde W:L^{2}(G\times_{K,1_{\H}\tensor\lambda_{K}}(\H\tensor
  L^{2}(K))) \to L^{2}(G\times_{K,V\tensor
    \lambda_{K}^{\bar\omega}}(\H\tensor L^{2}(K)))
\end{equation*}
which intertwines the induced representations $U^{1_{\H}\tensor
  \lambda_{K}}$ and $U^{V\tensor \lambda_{K}^{\bar\omega}}$ of $G$,
and the pointwise acting $\omega$-representation $V\tensor \rho_{K}$
with the pointwise acting $\omega$-representation $1\tensor
\rho_{K}^{\omega}$.

Now we plug in the Peter-Weyl decomposition $L^{2}(K)\cong
\bigoplus_{\sigma\in \widehat K_{\omega}} \H_{\sigma}^{*}\tensor
\H_{\sigma}$ of Lemma~\ref{lem-peter-weyl} (using
Remark~\ref{rem-switch}) together with the descriptions of
$\lambda_{K}^{\bar\omega}$ and $\rho_{K}^{\omega}$ with respect to
this decomposition to obtain the following.
\begin{prop}
  \label{prop-decomp-1}
  Let $V$ be an $\omega$-representation of a {compact} subgroup $K$ of
  $G$ on $\H$.  Then there is a unitary
  \begin{equation*}
    \Phi:\H\tensor L^{2}(G)\to \bigoplus_{\sigma\in\widehat K_{\omega}}
    L^{2} (G\times_{K,V\tensor
      \sigma^{*}}(\H\tensor\H_{\sigma}^{*}))\tensor \H_{\sigma}
  \end{equation*}
  which intertwines the $\omega$-representations $V\tensor
  \rho_{G}\restr K$ of $K$ on $\H\tensor L^{2}(G)$ with the
  $\omega$-representation $\bigoplus_{\sigma\in\widehat K_{\omega}}
  1_{L^{2}(G\times_{K}\H_{\sigma}\tensor\H_{\sigma}^{*})}\tensor\sigma$.
  Moreover,
  \begin{equation*}
    \K(\H\tensor L^{2}(G))^{K,\Ad(V\tensor \rho_{G})} \cong
    \bigoplus_{\sigma\in\widehat K_{\omega}}
    \K(L^{2}(G\times_{K,V\tensor\sigma^{*}}(\H\tensor
    \H_{\sigma}^{*})))\tensor 1_{\H_{\sigma}}.
  \end{equation*}
\end{prop}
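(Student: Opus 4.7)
The plan is to assemble the unitary $\Phi$ from the three ingredients that precede the statement: the unitary $\tilde U$ identifying $\H\tensor L^{2}(G)$ with the induced Hilbert space $L^{2}(G\times_{K,1_{\H}\tensor\lambda_{K}}(\H\tensor L^{2}(K)))$; the unitary $\tilde W$ conjugating the bundle data to $V\tensor \lambda_{K}^{\bar\omega}$ and turning the pointwise $V\tensor\rho_{K}$ into $1_{\H}\tensor\rho_{K}^{\omega}$; and the Peter--Weyl decomposition (Lemma~\ref{lem-peter-weyl} in the form given by Remark~\ref{rem-switch}). First I would compose $\tilde U$ and $\tilde W$ to produce a unitary
\begin{equation*}
  \H\tensor L^{2}(G)\xrightarrow{\;\tilde W\circ \tilde U\;}
  L^{2}\bigl(G\times_{K,V\tensor\lambda_{K}^{\bar\omega}}(\H\tensor L^{2}(K))\bigr),
\end{equation*}
which intertwines $V\tensor\rho_{G}\restr K$ with the pointwise action of $1_{\H}\tensor\rho_{K}^{\omega}$ on the right-hand side (and sends $1_{\H}\tensor\lambda_{G}$ to $U^{V\tensor\lambda_{K}^{\bar\omega}}$, though that piece is not needed here).

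Next I would plug in the Peter--Weyl isomorphism $L^{2}(K)\cong \bigoplus_{\sigma\in\widehat K_{\omega}}\H_{\sigma}^{*}\tensor\H_{\sigma}$ on the coefficient Hilbert space. Under this identification, $\lambda_{K}^{\bar\omega}$ becomes $\bigoplus_{\sigma}\sigma^{*}\tensor 1_{\H_{\sigma}}$ and $\rho_{K}^{\omega}$ becomes $\bigoplus_{\sigma}1_{\H_{\sigma}^{*}}\tensor\sigma$, so the coefficient representation $V\tensor\lambda_{K}^{\bar\omega}$ becomes $\bigoplus_{\sigma}(V\tensor\sigma^{*})\tensor 1_{\H_{\sigma}}$ and $1_{\H}\tensor\rho_{K}^{\omega}$ becomes $\bigoplus_{\sigma} 1_{\H\tensor\H_{\sigma}^{*}}\tensor\sigma$. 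The key identification is then
\begin{equation*}
  L^{2}\Bigl(G\times_{K,\bigoplus_{\sigma}(V\tensor\sigma^{*})\tensor
    1_{\H_{\sigma}}}\bigoplus_{\sigma}(\H\tensor\H_{\sigma}^{*}\tensor\H_{\sigma})\Bigr)
  \cong \bigoplus_{\sigma\in\widehat K_{\omega}}
  L^{2}\bigl(G\times_{K,V\tensor\sigma^{*}}(\H\tensor\H_{\sigma}^{*})\bigr)\tensor\H_{\sigma}.
\end{equation*}
This isomorphism holds because the trivial action on the right tensor factor $\H_{\sigma}$ in $(V\tensor\sigma^{*})\tensor 1_{\H_{\sigma}}$ lets one pull $\H_{\sigma}$ outside the $K$-equivariance constraint, and because the direct sum decomposition of the coefficient bundle is $K$-equivariant so the induced $L^{2}$-space splits accordingly. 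Under this splitting the pointwise action of $1_{\H}\tensor\rho_{K}^{\omega}$ becomes $\bigoplus_{\sigma}1_{L^{2}(G\times_{K,V\tensor\sigma^{*}}(\H\tensor\H_{\sigma}^{*}))}\tensor\sigma$. Composing with $\tilde W\circ\tilde U$ yields the desired unitary $\Phi$ with the asserted intertwining property.

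The statement about fixed-point algebras is then immediate from Remark~\ref{rem-basic-omega}: the $\omega$-representation of $K$ on $\H\tensor L^{2}(G)$ has been put, via $\Phi$, in the canonical isotopic form $\bigoplus_{\sigma}1_{\H'_{\sigma}}\tensor\sigma$ with $\H'_{\sigma}=L^{2}(G\times_{K,V\tensor\sigma^{*}}(\H\tensor\H_{\sigma}^{*}))$, and Remark~\ref{rem-basic-omega} directly gives
\begin{equation*}
  \K(\H\tensor L^{2}(G))^{K,\Ad(V\tensor\rho_{G})}
  \cong \bigoplus_{\sigma\in\widehat K_{\omega}}
  \K(\H'_{\sigma})\tensor 1_{\H_{\sigma}}.
\end{equation*}
The main obstacle I anticipate is the careful bookkeeping in the displayed identification above: one must check that the Peter--Weyl isomorphism, applied pointwise over $G/K$ inside the induced $L^{2}$-construction, really does commute with the formation of induced spaces and split the induced space as a Hilbert direct sum. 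This reduces to the elementary fact that for a $K$-equivariant orthogonal direct sum decomposition $\mathcal{E}=\bigoplus_{\sigma}\mathcal{E}_{\sigma}\tensor\H_{\sigma}$ of a $K$-Hilbert space with $K$ acting trivially on each $\H_{\sigma}$, the induced space $L^{2}(G\times_{K}\mathcal{E})$ is naturally $\bigoplus_{\sigma}L^{2}(G\times_{K}\mathcal{E}_{\sigma})\tensor\H_{\sigma}$; but it is exactly this verification that carries the content of the proposition.
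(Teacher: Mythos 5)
Your proposal is correct and follows essentially the same route as the paper: the paper's proof is literally ``the first assertion follows from the above discussion and the second from Remark~\ref{rem-basic-omega},'' where the discussion consists of exactly the unitaries $\tilde U$ and $\tilde W$ and the Peter--Weyl decomposition in the form of Remark~\ref{rem-switch} that you compose. The splitting of the induced $L^{2}$-space along the $K$-equivariant decomposition of the coefficient space, which you correctly flag as the step carrying the content, is precisely the point the paper leaves implicit.
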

\begin{proof}
  The first assertion follows from the above discussion and the second
  from Remark~\ref{rem-basic-omega}.
\end{proof}

\begin{notation}
  Since the Hilbert spaces
  $L^{2}\bigl(G\times_{K,V\tensor\sigma^{*}}(\H\tensor\H_{\sigma}^{*})\bigr)$
  are going to be ubiquitous in the sequel, we are going to introduce
  {the} notation
  \begin{equation*}
    \ltgvs:=L^{2}\bigl(G\times_{K,V\tensor\sigma^{*}}(\H\tensor\H_{\sigma}^{*})\bigr)
  \end{equation*}
  in an attempt to make some of our formulas easier to parse.
\end{notation}

\begin{remark}
  \label{rem-summary}
  The above proposition shows that the fixed-point algebra
  $\K(\H\tensor L^{2}(G))^{K,\Ad(V\tensor\rho_{G})}$ decomposes into
  blocks of compact operators such that each $\sigma\in\widehat
  K_{\omega}$ provides the block $\K(\ltgvs) $ with multiplicity
  $d_{\sigma}:=\dim \H_{\sigma}$.  Therefore, as a \cs-algebra, the
  fixed-point algebra is isomorphic to $\bigoplus_{\sigma\in\widehat
    K_{\omega}} \K(\ltgvs)$.  Thus there is a bijection
  \begin{equation*}
    \ind_{K}:\widehat K_{\omega}\to \bigl(\K(\H\tensor
    L^{2}(G))^{K,\Ad(V\tensor\rho_{G})}\bigr)^{\wedge}
  \end{equation*}
  which sends $\sigma\in\widehat K_{\omega}$ to the projection
  \begin{equation*}
    \ind_{K}\sigma: \K(\H\tensor
    L^{2}(G))^{K,\Ad(V\tensor\rho_{G})} \to
    \K(\ltgvs).
  \end{equation*}
\end{remark}

\begin{remark}
  By composing with the natural map of $U(\H_{\sigma})$ onto
  $PU(\H_{\sigma})$ each $\omega$-representation $\sigma$ determines a
  projective representation.  If $\sigma'$ is an
  $\omega'$-representation representing the same projective
  representation, then $[\omega]=[\omega']\in H^{2}(K,\T)$.  Thus we
  can speak of the collection of equivalence classes $\widehat
  K_{[\omega]}$ of irreducible projective representations with class
  $[\omega]\in H^{2}(K,\T)$.  (Of course, there is an obvious
  bijection of $\widehat K_{\omega}$ with $\widehat K_{[\omega]}$.)
  Thus if $B:=\K(\H)$, then an action $\beta:K\to \Aut B$ determines a
  class $[\omega]\in H^{2}(K,\T)$ which does not depend on our choice
  of lift $V:K\to U(\H)$.  {Therefore}, the previous discussion says
  we have an isomorphism
  \begin{equation}
    \label{eq:9}
    \bigl(B\tensor \K(L^{2}(G))\bigr)^{\beta\tensor\Ad\rho_{G}} \cong
    \bigoplus_{\sigma\in \widehat K_{[\omega]}} \K_{\sigma}(\ltgvs),
  \end{equation}
  \emph{for any choice of lift} $V:K\to U(\H)$ for $\beta$. Moreover,
  as a subalgebra of $\K(\H\tensor L^{2}(G))$, each summand
  $\K_{\sigma}(\ltgvs)$ appears with multiplicity $d_{\sigma}$.

  In what follows, we call the summand $\K_{\sigma}(\ltgvs)$ in
  \eqref{eq:9} the summand of $\bigl(B\tensor
  \kltg\bigr)^{\beta\tensor\Ad \rho_{G}}$ \emph{of type $\sigma$}.
\end{remark}

As an immediate consequence of the above results (and conventions in
Remark~\ref{rem-summary}), we obtain the following.

\begin{prop}
  \label{prop-ind-gx}
  Suppose that $\beta:G\to \Aut B$ is a {strictly} proper action of
  $G$ on the continuous-trace \cs-algebra $B$ with spectrum $X$.  Let
  $[\omega_{x}]\in H^{2}(G_{x},\T)$ denote the Mackey obstruction of
  $\beta^{x}:G_{x}\to \Aut B(x)$.  Then there is a canonical bijection
  \begin{equation*}
    \ind_{x}:\widehat G_{x,[\omega_{x}]}\to \bigl(\bigl(B(x)\tensor
    \kltg\bigr)^{G_{x} , \beta^{x}\tensor \Ad\rho_{G}}\bigr)^{\wedge}.
  \end{equation*}
  Moreover, the diagram
  % \begin{equation*}
  %   \xymatrix @C=-5pc{\widehat G_{x,[\omega_{x}]} \ar[r]^{C_{s}}
  %   \ar[d]_{\ind_{x}}
  %   & \widehat G_{s\cdot x,[\omega_{s\cdot x}]} \ar[d]^{\ind_{s\cdot x}} \\
  %   **[l]\bigl(\bigl(B(x)\tensor
  %   \kltg\bigr)^{G_{x},\beta^{x}\tensor\Ad\rho_{G}} \bigr)^{\wedge}&
  %   **[r]\bigl(\bigl( B(s\cdot x)\tensor\kltg\bigr)^{G_{s\cdot
  %   x},\beta^{s\cdot x}
  %   \tensor\Ad\rho_{G}}\bigr)^{\wedge}\ar[l]^{(\beta^{x}\tensor\Ad\rho_{G}(s))
  %   \circ {}}}
  % \end{equation*}
  \begin{equation*}
    \xymatrix @C+3pc @R+1pc{\widehat G_{x,[\omega_{x}]}
      \ar[r]^-{\ind_{x}} \ar[d]_{C_{s}}& 
      \bigl(\bigl(B(x)\tensor
      \kltg\bigr)^{G_{x},\beta^{x}\tensor\Ad\rho_{G}} \bigr)^{\wedge}
      \ar@<-9.25ex>[d]^{(\beta_{x,s}\tensor\Ad\rho_{G}(s))\circ{}}\\ 
      \widehat G_{s\cdot x,[\omega_{s\cdot x}]} \ar[r]_-{\ind_{s\cdot x}} &
      \bigl(\bigl( B(s\cdot x)\tensor\kltg\bigr)^{G_{s\cdot x},\beta^{s\cdot
          x }
        \tensor\Ad\rho_{G}}\bigr)^{\wedge}
     }  
  \end{equation*}
  commutes, where $\beta_{x,s}:B(x)\to B(s\cdot x)$ is the isomorphism
  given in \eqref{eq:1} and $C_{s}$ is given by $\sigma\mapsto
  s\cdot\sigma$ with $s\cdot \sigma(k)=\sigma(s^{-1}ks)$.
\end{prop}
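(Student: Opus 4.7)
The first assertion is immediate from Proposition~\ref{prop-decomp-1} and Remark~\ref{rem-summary}, applied to the compact subgroup $K=G_x$, the Hilbert space $\H=\H_x$, a Borel lift $V=V_x$ of $\beta^x$, and the multiplier $\omega=\omega_x$. This produces the decomposition
\begin{equation*}
  \bigl(B(x)\tensor\kltg\bigr)^{G_x,\beta^x\tensor\Ad\rho_G}\cong
  \bigoplus_{\sigma\in\widehat G_{x,[\omega_x]}}\K(\ltgvxs)\tensor 1_{\H_\sigma},
\end{equation*}
and one defines $\ind_x(\sigma)$ to be the projection onto the $\sigma$-summand.

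For the commutative diagram, the plan is to implement $\Psi_s:=\beta_{x,s}\tensor\Ad\rho_G(s)$ spatially. Since $\beta_{x,s}:\K(\H_x)\to\K(\H_{s\cdot x})$ is an isomorphism of elementary \cs-algebras, it has the form $\Ad U_s$ for some unitary $U_s:\H_x\to\H_{s\cdot x}$, so $\Psi_s=\Ad(U_s\tensor\rho_G(s))$. Using \eqref{eq:2} twice, once as $\beta_{s\cdot x,h}\circ\beta_{x,s}=\beta_{x,hs}$ and once as $\beta_{x,s}\circ\beta_{x,k}=\beta_{x,sk}$ for $k\in G_x$, one verifies
\begin{equation*}
  (\beta^{s\cdot x}_{sks^{-1}}\tensor\Ad\rho_G(sks^{-1}))\circ\Psi_s
  = \Psi_s\circ(\beta^x_k\tensor\Ad\rho_G(k))\qquad(k\in G_x),
\end{equation*}
which shows that $\Psi_s$ restricts to an isomorphism between the two fixed-point algebras appearing in the diagram.

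To identify which summand is carried to which, I would apply Proposition~\ref{prop-decomp-1} on both sides using $V_x$ on the left and the Borel lift $V_{s\cdot x}(h):=U_sV_x(s^{-1}hs)U_s^*$ of $\beta^{s\cdot x}$ on the right (a short computation shows that $\Ad V_{s\cdot x}=\beta^{s\cdot x}$ and that the associated cocycle $\omega_{s\cdot x}(h_1,h_2)=\omega_x(s^{-1}h_1s,s^{-1}h_2s)$ represents the class $[\omega_{s\cdot x}]$). Conjugation by $U_s\tensor\rho_G(s)$ then carries the $G_x$-representation $V_x\tensor\rho_G|_{G_x}$ on $\H_x\tensor L^2(G)$ to the $G_{s\cdot x}$-representation $h\mapsto V_{s\cdot x}(h)\tensor\rho_G(h)$ on $\H_{s\cdot x}\tensor L^2(G)$. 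By the uniqueness of the isotypical decomposition (Remark~\ref{rem-basic-omega}), this identifies the $\sigma$-isotype on the left with the $(s\cdot\sigma)$-isotype on the right, since $(s\cdot\sigma)(h)=\sigma(s^{-1}hs)$ is precisely the representation of $G_{s\cdot x}$ obtained from $\sigma$ via the conjugation isomorphism $G_{s\cdot x}\to G_x$. This is the commutativity asserted by the diagram.

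The main technical hurdle will be bookkeeping with the Borel lifts and their multipliers as they transform under conjugation by $s$; however, the phase ambiguity in the choice of $U_s$ is absorbed by passing to adjoint actions in $\Psi_s$, and $[\omega_x]\in H^2(G_x,\T)$ depends only on $\beta^x$ and not on the specific lift. Hence the verification ultimately reduces to routine unitary manipulations applied to the Peter-Weyl-type decomposition already established in Proposition~\ref{prop-decomp-1}.
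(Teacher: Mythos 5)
Your proposal is correct and takes essentially the same route as the paper: the bijection is read off from Proposition~\ref{prop-decomp-1} and Remark~\ref{rem-summary}, and the commutativity of the diagram is reduced, via \eqref{eq:2}, to the fact that conjugating $\beta^{x}\tensor\Ad\rho_{G}$ by $\beta_{x,s}\tensor\Ad\rho_{G}(s)$ yields $\beta^{s\cdot x}\tensor\Ad\rho_{G}$. The paper dismisses the remaining summand-tracking as ``straightforward computations,'' and your spatial implementation by $U_{s}\tensor\rho_{G}(s)$ together with the lift $V_{s\cdot x}(h)=U_{s}V_{x}(s^{-1}hs)U_{s}^{*}$ is precisely the correct way to carry those out.
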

\begin{proof}
  The only issue is the commutativity of the diagram.  But
  \eqref{eq:2} implies that conjugation of
  $\beta^{x}\tensor\Ad\rho_{G}$ by the isomorphism $\beta_{x,s}\tensor
  \Ad\rho_{G}(s)$ gives the action $\beta^{s\cdot
    x}\tensor\Ad\rho_{G}$.  The rest follows from straightforward
  computations.
\end{proof}

Recalling that every irreducible representation of {a}
$C_{0}(Y)$-algebra $A$ factors through a fibre, so that $\widehat
A=\coprod_{y\in Y} \widehat A(y)$, Theorem~\ref{thm-EE2.14} and our
Proposition~\ref{prop-ind-gx} imply the following version of the
Mackey-Green-Rieffel machine.

\begin{thm}[Mackey-Green-Rieffel]
  \label{thm-mackey-machine}
  Suppose that $\beta:G\to\Aut B$ is a {strictly} proper action on the
  continuous-trace \cs-algebra with spectrum $X$.  For each $x\in X$,
  let $[\omega_{x}]\in H^{2}(G_{x},\T)$ be {the} Mackey obstruction
  for $\beta^{x}:G_{x}\to \Aut B(x)$ (where $\beta^{x}$ is the induced
  action of the stabilizer subgroup $G_{x}$ on the fibre $B(x)$).  Let
  \begin{equation*}
    \stabxbh:= \set{(x,\sigma):\text{$x\in X$ and
        $\sigma\in \widehat G_{x,[\omega_{x}]}$}},
  \end{equation*}
  and let $G$ act on $\stabxbh$ via $s\cdot (x,\sigma):=(s\cdot
  x,s\cdot \sigma)$ with $s\cdot \sigma(k)=\sigma(s^{-1}ks)$.  Then
  there is a surjective map
  \begin{equation*}
    \Ind:\stabxbh\to \spec{B\rtimes_{\beta}G} \cong
    \bigl(\bigl(B\tensor\kltg\bigr)^{G,\beta\tensor\Ad\rho_{G}}\bigr)^{\wedge},
  \end{equation*}
  given by sending $(x,\sigma)\in\stabxbh$ to the corresponding
  representation of the fibre
  $\bigl(B(x)\tensor\kltg\bigr)^{G_{x},\beta^{x}\tensor\Ad \rho_{G}}$
  (as in Proposition~\ref{prop-ind-gx}), which factors through a
  bijection of $G\backslash \stabxbh$ onto $\spec{B\rtimes_{\beta}G}$.
\end{thm}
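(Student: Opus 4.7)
The plan is to read the statement off directly from Theorem~\ref{thm-EE2.14}, Proposition~\ref{prop-ind-gx}, and the standard decomposition of the spectrum of a $C_{0}(Y)$-algebra into its fibre spectra. By Theorem~\ref{thm-EE2.14} we may identify $B\rtimes_{\beta}G$ with the $C_{0}(\gux)$-algebra $A:=\bigl(B\tensor\kltg\bigr)^{G,\beta\tensor\Ad\rho_{G}}$, and Lemma~\ref{lem-genfixedalg} identifies its fibre over $G\cdot x$, via evaluation at $x$, with $A_{x}:=\bigl(B(x)\tensor\kltg\bigr)^{G_{x},\beta^{x}\tensor\Ad\rho_{G}}$. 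Since every irreducible representation of a $C_{0}(\gux)$-algebra factors through a unique fibre, we have a canonical set-theoretic decomposition
\begin{equation*}
\spec{B\rtimes_{\beta}G}\;\cong\;\coprod_{G\cdot x\in\gux}\spec{A_{x}}.
\end{equation*}

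Proposition~\ref{prop-ind-gx} next supplies a bijection $\ind_{x}\colon\widehat G_{x,[\omega_{x}]}\to\spec{A_{x}}$ for each $x\in X$. Composing with the canonical inclusion $\spec{A_{x}}\hookrightarrow\spec{B\rtimes_{\beta}G}$ of the previous display defines the map $\Ind$ of the statement, and surjectivity is immediate from the fibre decomposition together with the bijectivity of each $\ind_{x}$.

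The substantive point is that $\Ind$ is constant on $G$-orbits and separates distinct orbits. If $(x',\sigma')=s\cdot(x,\sigma)=(s\cdot x,s\cdot\sigma)$, then $G\cdot x=G\cdot x'$, so $A_{x}$ and $A_{x'}$ are two realizations of the same fibre of $A$; the two evaluation identifications from Lemma~\ref{lem-genfixedalg} differ precisely by the isomorphism $\beta_{x,s}\tensor\Ad\rho_{G}(s)\colon A_{x}\to A_{x'}$ induced by the $G$-action on $\B\tensor\kltg$ (compare \eqref{eq:1}). The commuting diagram of Proposition~\ref{prop-ind-gx} is exactly the statement that under this isomorphism $\ind_{x}(\sigma)$ corresponds to $\ind_{x'}(s\cdot\sigma)=\ind_{x'}(\sigma')$, so both pairs induce the same irreducible representation of $A$. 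Conversely, if $\Ind(x,\sigma)=\Ind(x',\sigma')$, then disjointness of the fibre spectra in the display forces $G\cdot x=G\cdot x'$; writing $x'=s\cdot x$ and using the same commuting diagram to transport $(x',\sigma')$ back to the fibre $A_{x}$, the injectivity of $\ind_{x}$ forces $\sigma=s^{-1}\cdot\sigma'$, so $(x,\sigma)$ and $(x',\sigma')$ lie in the same orbit. The only real obstacle in the argument is this compatibility under change of basepoint within an orbit, and it is already encoded in Proposition~\ref{prop-ind-gx}; everything else is a direct quotation of previously established results.
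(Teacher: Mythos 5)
Your argument is correct and is exactly the route the paper takes: the paper presents this theorem as an immediate consequence of Theorem~\ref{thm-EE2.14}, the fibre decomposition $\widehat A=\coprod_{y}\widehat{A(y)}$ for a $C_{0}(Y)$-algebra, and the equivariance diagram of Proposition~\ref{prop-ind-gx}, which is precisely the structure of your proof. You have merely written out the routine orbit/injectivity bookkeeping that the paper leaves implicit.
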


\begin{remark}
  We should compare our version of the Mackey-Green-Rieffel machine
  with the classical approach.  There we start with the irreducible
  representation $\pi_{x}:B\to \K(\H_{x})$ (essentially evaluation at
  $x$), and let $V_{x}:G_{x}\to U(\H_{x})$ and $\omega_{x}\in
  Z^{2}(G_{x},\T)$ be as above.  Then if $\sigma\in \widehat
  G_{x,\omega_{x}}$ we obtain an irreducible unitary representation
  $(\pi_{x}\tensor 1_{\H_{\sigma}^{*}})\rtimes (V_{x}\tensor
  \sigma^{*})$ of $B\rtimes_{\beta}G_{x}$ on
  $\H_{x}\tensor\H_{\sigma}^{*}$.  Then we obtain an irreducible
  representation of $B\rtimes_{\beta}G$ via induction:
  $\Ind_{G_{x}}^{G} \bigl((\pi_{x}\tensor 1_{\H_{\sigma}^{*}})\rtimes
  (V_{x}\tensor \sigma^{*})\bigr)$.  Both this induced representation
  and our $\ind_{x}(\sigma)$ (as in Theorem~\ref{thm-mackey-machine})
  act on $L^{2}(G\times_{G_{x},V_{x}\tensor\sigma^{*}}(\H_{x}\tensor
  \H_{\sigma}^{*}))=\ltgvxs$.  It is not difficult to check that the
  two representations are the same.
\end{remark}

In order to understand the topology on $\spec{B\rtimes_{\beta}G}$, we
will need to compare $\bigl(B(x)\tensor
\kltg\bigr)^{G_{x},\beta^{x}\tensor\Ad \rho_{G}}$ with the fixed point
algebra $\bigl(B(x)\tensor\kltg\bigr)^{L,\beta^{x}\tensor\Ad\rho_{G}}$
for a closed subgroup $L$ of $G_{x}$.  To simplify the notation, we
consider the following set-up.  We suppose that $L$ is a closed
subgroup of a compact subgroup $K$ of $G$ and $\omega\in Z^{2}(K,\T)$.
We let $V:K\to U(\H)$ be an $\omega$-representation with $\beta:=\Ad
V$.  Then we may restrict everything to $L$ so that the fixed point
algebra $\K\bigl(\H\tensor L^{2}(G)\bigr)^{K,\Ad(V\tensor \rho_{G})}$
is a subset of $\K\bigl(\H\tensor L^{2}(G)\bigr)^{L,\Ad(V\tensor
  \rho_{G})}$.  Thus any block $\K(\ltgvs)$ of type $\sigma\in\widehat
K_{[\omega]}$ of the fixed-point algebra for the $K$-action must be
contained in a block $\K(L^{2}(G\times_{L,V\tensor
  \tau^{*}}(\H\times\H_{\tau}^{*})))=\K(\ltgvt)$ of type
$\tau\in\widehat L_{[\omega]}$ in the fixed-point algebra for the
$L$-action.  We aim to determine how many blocks of a given type
$\sigma$ line a block of type $\tau$.

Towards this end, we note that Proposition~\ref{prop-decomp-1} implies
that we can decompose $\H\tensor L^{2}(G)$ as
\begin{equation}\label{eq:10}
  \bigoplus_{\sigma\in\widehat K_{\omega}}
  \ltgvs\tensor\H_{\sigma}.
\end{equation}
Furthermore, the decomposition in \eqref{eq:10} is such that
$V\tensor\rho_{G}$ is intertwined with the $\omega$-representation
$\bigoplus_{\sigma} 1_{\ltgvs}\tensor\sigma$.  For each
$\sigma\in\widehat K_{\sigma}$, we can decompose $\sigma\restr L$ into
a direct sum $\bigoplus_{\tau\in\widehat L_{\omega}}
m_{\tau}^{\sigma}\cdot \tau$ for appropriate multiplicities
$m_{\tau}^{\sigma}$.  Thus $\H_{\sigma}=\bigoplus_{\tau\in\widehat
  L_{\omega}}\H_{m_{\tau}^{\sigma}}\tensor\H_{\tau}$, and
\eqref{eq:10} becomes
\begin{equation}
  \label{eq:11}
  \bigoplus_{\tau\in\widehat L_{\omega}} \Bigl(
  \bigoplus_{\sigma\in\widehat K_{\omega}} \ltgvs \tensor
  \H_{m_{\tau}^{\sigma}}\Bigr)
  \tensor\H_{\tau},
\end{equation}
and the restriction of $V\tensor \rho_{G}$ to $L$ is given by
\begin{equation*}
  (V\tensor\rho_{G})\restr L \cong \bigoplus_{\tau\in\widehat L_{\omega}}
  1_{\bigoplus_{\sigma\in \widehat
      K_{\omega}}\ltgvs\tensor\H_{m_{\tau}^{\sigma}}}\tensor\tau .
\end{equation*}
This induces an isomorphism of the fixed-point algebra
\begin{equation}
  \label{eq:12}
  \K(\H\tensor L^{2}(G))^{L,\Ad (V\tensor \rho_{G})} \cong
  \bigoplus_{\tau\in \widehat L_{\omega}} \K \Bigl(
  \bigoplus_{\sigma\in\widehat K_{\omega}} \ltgvs\tensor
  \H_{m_{\tau}^{\sigma}} \Bigr) \tensor 1_{\H_{\tau}}.
\end{equation}
Since the decomposition of $(V\tensor\rho_{G})\restr L$ into isotopes
$\K_{\tau}$ for the representations $\tau\in\widehat L_{\omega}$ is
unique, we conclude that
\begin{equation*}
  \K_{\tau} = \K(\ltgvt)\cong\K\Bigl( \bigoplus_{\sigma\in\widehat K_{\omega}}
  \ltgvs\tensor\H_{m_{\tau}^{\sigma}}\Bigr) ,
\end{equation*}
where as above, we have written $\ltgvt$ for
$L^{2}(G\times_{L,V\tensor \tau^{*}}(\H\tensor\H_{\tau}^{*}))$.

Summing up, we have the following proposition.
\begin{prop}
  \label{prop-mult-sig-in-tau}
  In the above setting, each block of type $\sigma\in\widehat
  K_{\omega}$ in the decomposition of $\K(\H\tensor
  L^{2}(G))^{K,\Ad(V\tensor\rho_{G})}$ appears with multiplicity
  $m_{\tau}^{\sigma}$ in each block of type $\tau\in\widehat
  L_{\omega}$ in the decomposition of $\K(\H\tensor
  L^{2}(G))^{L,\Ad(V\tensor \rho_{G})}$, where $m_{\tau}^{\sigma}$ is
  the multiplicity of $\tau$ in $\sigma\restr L$.
\end{prop}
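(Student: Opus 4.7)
The plan is to apply Proposition~\ref{prop-decomp-1} to both $K$ and $L$ and compare the two resulting isotope decompositions of $\H\tensor L^{2}(G)$. By that proposition we already have
\begin{equation*}
  \H\tensor L^{2}(G) \cong \bigoplus_{\sigma\in\widehat K_{\omega}} \ltgvs \tensor \H_{\sigma},
\end{equation*}
under which $V\tensor \rho_{G}$ corresponds to $\bigoplus_{\sigma} 1_{\ltgvs}\tensor\sigma$. First I would plug in the branching rule $\sigma\restr L \cong \bigoplus_{\tau\in\widehat L_{\omega}} m_{\tau}^{\sigma}\cdot\tau$ and reorganize the direct sum by $\tau$, realizing $\H\tensor L^{2}(G)$, as an $L$-representation under $(V\tensor \rho_{G})\restr L$, in the form
\begin{equation*}
  \bigoplus_{\tau\in\widehat L_{\omega}} \Bigl( \bigoplus_{\sigma\in\widehat K_{\omega}} \ltgvs \tensor \H_{m_{\tau}^{\sigma}}\Bigr) \tensor \H_{\tau},
\end{equation*}
with the action trivial on the outer factor and equal to $\tau$ on the inner one. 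Taking fixed points then gives
\begin{equation*}
  \K(\H\tensor L^{2}(G))^{L,\Ad(V\tensor\rho_{G})} \cong \bigoplus_{\tau\in\widehat L_{\omega}} \K\Bigl(\bigoplus_{\sigma\in\widehat K_{\omega}} \ltgvs \tensor \H_{m_{\tau}^{\sigma}}\Bigr)\tensor 1_{\H_{\tau}}.
\end{equation*}

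Second, I would apply Proposition~\ref{prop-decomp-1} directly to the compact subgroup $L$ to obtain the ``canonical'' description of the very same fixed-point algebra as $\bigoplus_{\tau\in\widehat L_{\omega}} \K(\ltgvt)\tensor 1_{\H_{\tau}}$. Uniqueness of the isotopic decomposition of an $\omega$-representation of the compact group $L$ --- which follows from Peter-Weyl applied to the central extension $L\times_{\omega}\T$, as recalled in Remark~\ref{rem-basic-omega} --- then forces a $\tau$-by-$\tau$ unitary identification
\begin{equation*}
  \ltgvt \cong \bigoplus_{\sigma\in\widehat K_{\omega}} \ltgvs \tensor \H_{m_{\tau}^{\sigma}}.
\end{equation*}
Reading off this decomposition of the type-$\tau$ block shows that each type-$\sigma$ block $\K(\ltgvs)$ sits inside $\K(\ltgvt)$ with multiplicity $\dim \H_{m_{\tau}^{\sigma}} = m_{\tau}^{\sigma}$, which is exactly the assertion of the proposition.

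The argument is essentially bookkeeping once Proposition~\ref{prop-decomp-1} and the classical branching rule for $\sigma\restr L$ are in hand, so I do not expect a serious obstacle. The one step requiring care is the appeal to uniqueness of isotopes: I must confirm that the canonical decomposition produced by applying Proposition~\ref{prop-decomp-1} to $L$ really is the isotopic decomposition of $(V\tensor \rho_{G})\restr L$, so that the two right-hand sides above can be matched index-by-index. This is automatic since in both cases the outer summand indexed by $\tau$ is characterized intrinsically as the maximal $L$-invariant subspace on which $(V\tensor \rho_{G})\restr L$ acts as a multiple of~$\tau$.
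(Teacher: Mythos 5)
Your proposal is correct and follows essentially the same route as the paper: decompose $\H\tensor L^{2}(G)$ via Proposition~\ref{prop-decomp-1} for $K$, insert the branching rule $\sigma\restr L\cong\bigoplus_{\tau}m_{\tau}^{\sigma}\cdot\tau$, regroup by $\tau$, and invoke uniqueness of the isotypic decomposition of $(V\tensor\rho_{G})\restr L$ to identify $\ltgvt$ with $\bigoplus_{\sigma}\ltgvs\tensor\H_{m_{\tau}^{\sigma}}$. The paper states this identification directly as $\K_{\tau}=\K(\ltgvt)\cong\K\bigl(\bigoplus_{\sigma}\ltgvs\tensor\H_{m_{\tau}^{\sigma}}\bigr)$, which is exactly your comparison of the two descriptions of the type-$\tau$ block.
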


We will also need to know how exterior equivalence effects our fixed
point algebras.  Recall that $\alpha,\beta:L\to \Aut A$ are called
exterior equivalent if there is a strictly continuous map $u:L\to
U(A)$ such that
\begin{equation}
  \label{eq:13}
  \alpha(l)={\Ad(u)(l)}\circ \beta(l)\quad\text{and}\quad
  u(lk)=u(l)\beta_{l}(u(k)) \quad\text{for all $k,l\in L$.}
\end{equation}
In our case, $A=\K(\H)$ and the map $u$ in \eqref{eq:13} is a strongly
continuous map into $U(\H)$ such that $\alpha=\Ad (uV)$.  Since $G$ is
second countable, we can choose a Borel cross-section $c:G/L\to G$ and
decompose $L^{2}(G)\cong L^{2}(G/L)\tensor L^{2}(L)$ by sending
$\xi\in L^{2}(G)$ to the element $\tilde \xi\in L^{2}(G/L)\tensor
L^{2}(L)\cong L^{2}(G/L\times L)$ given by $\tilde
\xi(sL,l)=\xi(c(sL)l)$.  Then we obtain a decomposition
\begin{equation*}
  \H\tensor L^{2}(G)\cong L^{2}(G/L)\tensor\H\tensor L^{2}(L)
\end{equation*}
which intertwines the representation $(V\tensor \rho_{G})\restr L$
with the representation $1_{L^{2}(G/L)}\tensor (V\tensor \rho_{L})$.
Then it follows from Lemma~\ref{lem-absorbtion} (applied twice), after
identifying $\H\tensor L^{2}(L)$ with $L^{2}(L,\H)$, that the unitary
$W:L^{2}(L,\H)\to L^{2}(K,\H)$ given by
\begin{equation}\label{eq:8}
  (W\xi)(l):= V(l)^{*}u(l)^{*}V(l)\xi(l)
\end{equation}
intertwines $V\tensor \rho_{L}$ and $uV\tensor \rho_{L}$.  Therefore,
we get the following result.
\begin{lemma}
  \label{lem-ext-equ}
  Let $W$ be as above and let $\tilde W\in U(\H\tensor L^{2}(G))$
  corresponding to $1_{L^{2}(G/L)}\tensor W$ under the decomposition
  $L^{2}(G)\cong L^{2}(G/L)\tensor L^{2}(L)$.  Then
  \begin{equation*}
    \K(\H\tensor L^{2}(G))^{L,\Ad(uV\tensor\rho_{G})} \cong \tilde W
    \bigl( \K(\H\tensor L^{2}(G))^{L,\Ad(V\tensor\rho_{G})}
    \bigr)\tilde W^{*}.
  \end{equation*}
\end{lemma}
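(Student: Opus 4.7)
The plan is to show that $\tilde W$ directly intertwines the $L$-representations $(V\tensor\rho_G)|_L$ and $(uV\tensor\rho_G)|_L$, so that conjugation by $\tilde W$ carries one fixed-point algebra isomorphically onto the other.

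First, I would unpack the three-fold tensor decomposition $\H\tensor L^{2}(G)\cong L^{2}(G/L)\tensor\H\tensor L^{2}(L)$ coming from the Borel cross-section $c\colon G/L\to G$, which was set up in the paragraph preceding the lemma. The essential point is that under this decomposition $\rho_G|_L$ is realized as the pointwise action of $\rho_L$ on the third tensor factor, so $(V\tensor\rho_G)|_L$ corresponds to $1_{L^{2}(G/L)}\tensor(V\tensor\rho_L)$, and by the identical argument $(uV\tensor\rho_G)|_L$ corresponds to $1_{L^{2}(G/L)}\tensor(uV\tensor\rho_L)$. Note that $uV$ is itself an $\omega$-representation of $L$, which follows from the cocycle identity \eqref{eq:13} together with the $\omega$-representation property of $V$, so that the right-hand fixed-point algebra makes sense in the same framework.

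Next, I would invoke the already-established fact in the paragraph just above the lemma that $W$ intertwines $V\tensor\rho_L$ and $uV\tensor\rho_L$ on $L^{2}(L,\H)\cong\H\tensor L^{2}(L)$; this is a direct consequence of Lemma~\ref{lem-absorbtion} applied twice (once to $V\tensor\rho_L$ and once to $uV\tensor\rho_L$, both of which absorb to $1_{\H}\tensor\rho_L^{\omega}$ since $V$ and $uV$ share the multiplier $\omega$). Tensoring with the identity on $L^{2}(G/L)$, the operator $\tilde W$ intertwines $(V\tensor\rho_G)|_L$ with $(uV\tensor\rho_G)|_L$, i.e.,
\[
\tilde W\bigl(V(l)\tensor\rho_G(l)\bigr)=\bigl(u(l)V(l)\tensor\rho_G(l)\bigr)\tilde W\quad\text{for every }l\in L.
\]

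The identity of fixed-point algebras then follows from a standard intertwining argument: for any $T\in\K(\H\tensor L^{2}(G))^{L,\Ad(V\tensor\rho_G)}$ and any $l\in L$,
\[
\Ad\bigl(u(l)V(l)\tensor\rho_G(l)\bigr)(\tilde W T\tilde W^{*})=\tilde W\,\Ad\bigl(V(l)\tensor\rho_G(l)\bigr)(T)\,\tilde W^{*}=\tilde W T\tilde W^{*},
\]
so $\tilde W(\,\cdot\,)\tilde W^{*}$ sends the $\Ad(V\tensor\rho_G)$-fixed algebra into the $\Ad(uV\tensor\rho_G)$-fixed algebra, with the reverse inclusion coming from conjugating by $\tilde W^{*}$. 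The main obstacle here is purely bookkeeping — correctly tracking how each representation acts on the three tensor factors and checking that $uV$ is still an $\omega$-representation. There is no new mathematical content beyond Lemma~\ref{lem-absorbtion} and the cross-section decomposition already assembled just above the lemma.
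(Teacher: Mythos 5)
Your proposal is correct and follows essentially the same route as the paper: the paper's proof consists precisely of the cross-section decomposition $\H\tensor L^{2}(G)\cong L^{2}(G/L)\tensor\H\tensor L^{2}(L)$ intertwining $(V\tensor\rho_{G})\restr L$ with $1_{L^{2}(G/L)}\tensor(V\tensor\rho_{L})$, followed by the double application of Lemma~\ref{lem-absorbtion} showing that $W$ intertwines $V\tensor\rho_{L}$ with $uV\tensor\rho_{L}$, after which the conjugation statement for the fixed-point algebras is immediate. Your added checks --- that $uV$ is again an $\omega$-representation and the explicit $\Ad$ computation at the end --- are details the paper leaves implicit, but they introduce no new ideas.
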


\section{The Space $\stabxbh$}
\label{sec:space-stabxbh}

Our object in this section is to equip $\stabxbh$ with a natural
topology so that the induction map $\Ind:\stabxbh\to
\spec{B\rtimes_{\beta}G}$ of Theorem~\ref{thm-mackey-machine} induces
a homeomorphism of the quotient topological space $G\backslash
\stabxbh$ onto $\spec{B\rtimes_{\beta}G}$ for any {strictly} proper
action $\beta$ of $G$ on a continuous-trace \cs-algebra $B$ with
spectrum $X$.

{First, we need some general observations.  Let $A$ be a
  $\xtg$-algebra with respect to $\alpha:G\to \Aut A$ so that we can
  form the generalized fixed point algebra $A^{G,\alpha}$, and recall
  that $A^{G,\alpha}$ is a $C_{0}(\gux)$-algebra.  If $q:X\to\gux$ is
  the orbit map, we can form the pull-back
  \begin{equation*}
    q^{*}A^{G,\alpha}=C_{0}(X)\tensor_{C_{0}(\gux)}A^{G,\alpha}.
  \end{equation*}
  Using the description of the primitive ideal space from
  \cite{raewil:tams85}*{Lemma~1.1}, it is easy {to} identify the fibre
  $q^{*}A^{G,\alpha}(x)$ with $A^{G,\alpha}(G\cdot x)$.  Recall that
  Lemma~\ref{lem-genfixedalg} implies $A^{G,\alpha}(G\cdot x)$ is
  isomorphic to $A(x)^{G_{x},\alpha^{x}}$ via evaluation at $x$.

  We let
  \begin{equation*}
    A_{\fix}:=\set{a\in A=\sa_{0}(X,\A):\text{$a(x)\in
        A(x)^{G_{x},\alpha^{x}}$ for all $x$}}.
  \end{equation*}
  Then $A_{\fix}$ is a $C_{0}(X)$-subalgebra of $A$.  In fact it is
  $G$-invariant.  To see this, let $a\in \A_{\fix}$ and
  $b:=\alpha_{s}(a)$ for some $s\in G$.  Let $k\in G_{x}$.  Appealing
  to \eqref{eq:1} and \eqref{eq:2} as necessary, we have
  \begin{align*}
    \alpha_{k}^{x}\bigl(b(x)\bigr) &= \alpha_{x,k}\bigl(b(x)\bigr) = \alpha_{ks}(a)(x) \\
    &= \alpha_{s^{-1}\cdot x,ks} \bigl(a(s^{-1}\cdot x)\bigr) \\
    &= \alpha_{s^{-1}\cdot x,s}\circ \alpha_{s^{-1}\cdot
      x,s^{-1}ks}\bigl(a(s^{-1}\cdot x)\bigr) \\
    \intertext{which, since $s^{-1}ks\in G_{s^{-1}\cdot x}$,
      $\alpha_{s^{-1}\cdot x,s^{-1}ks}=\alpha_{s^{-1}ks}^{s^{-1}\cdot
        x}$ and $a\in A_{\fix}$, is}
    &=\alpha_{s^{-1}\cdot x,s}\bigl(a(s^{-1}\cdot x)\bigr) \\
    &=\alpha_{s}(a)(x)=b(x).
  \end{align*}

  \begin{lemma}
    \label{lem-closed-mult}
    Viewing $A^{G,\alpha}{\subseteq} \sa_{b}(X,\A)$ as in
    Lemma~\ref{lem-genfixedalg}, it follows that if $\phi\in C_{0}(X)$
    and $a\in A^{G,\alpha}$, then $\phi\cdot a\in A_{\fix}$ (where
    $(\phi\cdot a)(x)=\phi(x)a(x)$).  In particular, $A_{\fix}$ is a
    $C_{0}(X)$-algebra with fibres $A_{\fix}(x)\cong
    A(x)^{G_{x},\alpha^{x}}$ via evaluation at $x$.  Its spectrum can
    be identified with the set $\widehat
    A_{\fix}=\set{(x,\pi):\text{$x\in X$ and $\pi\in
        \spec{A(x)^{G_{x},\alpha^{x}}}$}}$.  Furthermore, the induced
    $G$-action on $\widehat A_{\fix}$ {is} given by $s\cdot
    (x,\pi)=(s\cdot x, \pi \circ \alpha_{s^{-1}\cdot x, s})$.
  \end{lemma}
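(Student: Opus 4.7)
The plan is to address the four assertions in sequence, with the multiplier claim \emph{(that $\phi\cdot a\in A_{\fix}$)} doing most of the real work. The overarching observation is that $\phi\in C_0(X)\subseteq ZM(A)$ acts on each fibre $A(x)$ by the scalar $\phi(x)$, which commutes trivially with every fibrewise automorphism.

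For the multiplier claim, I would first view $a\in A^{G,\alpha}$ as a bounded continuous section of $\A$ via Lemma~\ref{lem-genfixedalg}, so that $\phi\cdot a$ automatically lies in $A=\sa_0(X,\A)$. Specializing the equivariance $a(s\cdot x)=\alpha_{x,s}(a(x))$ from \eqref{eq:1} to $s\in G_x$ gives $a(x)\in A(x)^{G_x,\alpha^x}$, and linearity of $\alpha^x_s$ then yields $\phi(x)a(x)\in A(x)^{G_x,\alpha^x}$, so $\phi\cdot a\in A_{\fix}$. This same scalar argument shows $A_{\fix}$ is a $C_0(X)$-subalgebra of $A$; it is also norm-closed because each $A(x)^{G_x,\alpha^x}\subseteq A(x)$ is closed and the evaluation maps are continuous.

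Standard $C_0(X)$-bundle theory then realises the fibre $A_{\fix}(x)$ as the closure of $\{a(x):a\in A_{\fix}\}$, which visibly sits inside $A(x)^{G_x,\alpha^x}$. The reverse inclusion is where the first part earns its keep: given $b_x\in A(x)^{G_x,\alpha^x}$, Lemma~\ref{lem-genfixedalg} supplies $a\in A^{G,\alpha}$ with $a(x)=b_x$, and multiplying by any $\phi\in C_0(X)$ with $\phi(x)=1$ produces an element of $A_{\fix}$ whose value at $x$ is $b_x$. The spectrum identification $\widehat{A_{\fix}}=\coprod_{x\in X}\widehat{A(x)^{G_x,\alpha^x}}$ is then the standard fact that every irreducible representation of a $C_0(X)$-algebra factors through a fibre, combined with the fibre identification just established.

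For the final assertion, $A_{\fix}$ is $\alpha$-invariant by the direct computation preceding the lemma, so $\alpha$ descends to an action on $\widehat{A_{\fix}}$ via $(s\cdot\tilde\pi)(a)=\tilde\pi(\alpha_{s^{-1}}(a))$. Writing a fibre representation as $\tilde\pi(a)=\pi(a(x))$ and applying \eqref{eq:1} to $\alpha_s(\alpha_{s^{-1}}(a))=a$ gives $\alpha_{s^{-1}}(a)(x)=\alpha_{x,s}^{-1}(a(s\cdot x))=\alpha_{s\cdot x,s^{-1}}(a(s\cdot x))$, where the last equality uses \eqref{eq:2}. Reading this off yields the new base point and the new fibre representation in the stated form. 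The main obstacle I anticipate is purely bookkeeping --- keeping the various base points $x$, $s\cdot x$, $s^{-1}\cdot x$ straight and using \eqref{eq:2} together with $G_{s\cdot x}=sG_xs^{-1}$ to check that the fibrewise isomorphisms $\alpha_{y,t}$ restrict correctly to the relevant fixed-point subalgebras.
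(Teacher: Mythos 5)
Your argument is correct and follows essentially the same route as the paper: the multiplier claim and the fibre identification via a cut-off function $\phi$ with $\phi(x)=1$ together with the surjectivity supplied by Lemma~\ref{lem-genfixedalg}, and then the standard disjoint-union description of the spectrum of a $C_{0}(X)$-algebra. Your explicit computation of the $G$-action lands on $\pi\circ\alpha_{s\cdot x,s^{-1}}=\pi\circ\alpha_{x,s}^{-1}$, which is the type-correct form of the formula the lemma intends, so no issue there.
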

  \begin{proof}
    The first assertion is straightforward as any $a\in A^{G,\alpha}$
    must satisfy $a(x)\in A(x)^{G_{x},\alpha^{x}}$. But evaluation at
    $x$ clearly defines an injection of $A_{\fix}(x)$ into
    $A(x)^{G_{x},\alpha^{x}}$.  But if $a_{0}\in
    A(x)^{G_{x},\alpha^{x}}$, then Lemma~\ref{lem-genfixedalg} implies
    that there is a $a\in A^{G,\alpha}$ such that $a(x)=a_{0}$.  We
    let $\phi\in C_{0}(X)$ be such that $\phi(x)=1$.  Then $\phi\cdot
    a\in A_{\fix}$ and $(\phi\cdot a)(x)=a_{0}$.  Hence $A_{\fix}$ is
    a $C_{0}(X)$-algebra with fibres as claimed.  The remaining
    assertions are straightforward consequences of the fact that the
    spectrum of a $C_{0}(X)$-algebra is the disjoint union of the
    spectrums of its fibres.
  \end{proof}

  \begin{prop}
    \label{prop-pull-back-iso}
    Let $A$ be a \xtg-algebra as above.  Then the map $\phi\tensor
    a\mapsto \phi\cdot a$ induces a $C_{0}(X)$-isomorphism of the
    pull-back $q^{*}A^{G,\alpha}$ onto $A_{\fix}$.
  \end{prop}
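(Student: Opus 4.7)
The plan is to establish the isomorphism by verifying well-definedness, checking that it is a $C_{0}(X)$-linear $*$-homomorphism, and then reducing to a fibrewise computation using Lemmas~\ref{lem-genfixedalg} and~\ref{lem-closed-mult}.

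First, I would show that the rule $\phi \tensor a \mapsto \phi \cdot a$ (with $(\phi\cdot a)(x) = \phi(x) a(x)$, using the realization of $A^{G,\alpha}$ inside $\sa_{b}(X,\A)$ from Lemma~\ref{lem-genfixedalg}) is $C_{0}(\gux)$-balanced on $C_{0}(X)\odot A^{G,\alpha}$: for $f \in C_{0}(\gux)$, both $(\phi(f\circ q))\tensor a$ and $\phi \tensor (f\cdot a)$ map to the section $x\mapsto \phi(x)f(q(x))a(x)$, so the rule descends to a map $\Psi\colon q^{*}A^{G,\alpha} \to \sa_{b}(X,\A)$. Its image lies in $A_{\fix}$: Lemma~\ref{lem-genfixedalg} gives $a(x)\in A(x)^{G_{x},\alpha^{x}}$, hence also $\phi(x)a(x)\in A(x)^{G_{x},\alpha^{x}}$, and the norm function $x\mapsto \|\phi(x)a(x)\|$ lies in $C_{0}(X)$ since $\phi\in C_{0}(X)$ and the generalized fixed-point norm on $A^{G,\alpha}$ is bounded. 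It is immediate from the formula that $\Psi$ is a $*$-homomorphism and $C_{0}(X)$-linear (where $C_{0}(X)$ acts on the first tensor factor of the pull-back and by pointwise multiplication on $A_{\fix}$).

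Next I would analyze the map $\Psi$ on fibres. Using \cite{raewil:tams85}*{Lemma~1.1}, the fibre $q^{*}A^{G,\alpha}(x)$ identifies with $A^{G,\alpha}(q(x))$, and Lemma~\ref{lem-genfixedalg} identifies this in turn with $A(x)^{G_{x},\alpha^{x}}$ via evaluation at $x$, sending the class of $\phi \tensor a$ to $\phi(x) a(x)$. On the other side, Lemma~\ref{lem-closed-mult} identifies $A_{\fix}(x)$ with $A(x)^{G_{x},\alpha^{x}}$, also via evaluation at $x$. Under these two identifications the induced map on fibres is the identity automorphism of $A(x)^{G_{x},\alpha^{x}}$; in particular it is an isomorphism at every $x\in X$.

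Finally I would invoke the standard principle that a $C_{0}(X)$-linear $*$-homomorphism between $C_{0}(X)$-algebras that induces an isomorphism on every fibre is itself an isomorphism (see, e.g., \cite{wil:crossed}*{Appendix~C}). Applied to $\Psi$, this gives the desired $C_{0}(X)$-isomorphism $q^{*}A^{G,\alpha}\cong A_{\fix}$.

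The main bookkeeping obstacle is matching the two fibre identifications: one has to see that the evaluation-at-$x$ isomorphism $A^{G,\alpha}(G\cdot x)\to A(x)^{G_{x},\alpha^{x}}$ from Lemma~\ref{lem-genfixedalg} is compatible, via $\Psi$, with the evaluation-at-$x$ isomorphism $A_{\fix}(x)\to A(x)^{G_{x},\alpha^{x}}$ from Lemma~\ref{lem-closed-mult}. Once the diagram is seen to commute, the rest is routine.
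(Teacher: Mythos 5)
Your proof is correct and follows essentially the same route as the paper's (which simply notes that, by Lemma~\ref{lem-closed-mult}, the map is a $C_{0}(X)$-homomorphism inducing isomorphisms on fibres, hence an isomorphism). You have merely spelled out the balancing, the fibre identifications via Lemmas~\ref{lem-genfixedalg} and~\ref{lem-closed-mult}, and the standard fibrewise-isomorphism principle that the paper leaves implicit.
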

  \begin{proof}
    In view of Lemma~\ref{lem-closed-mult}, it is clear that
    $\phi\tensor a\mapsto \phi\cdot a$ induces a
    $C_{0}(X)$-homomorphism which is an isomorphism on the fibres.
    Hence it is an isomorphism as claimed.
  \end{proof}

  By Lemma~\ref{lem-genfixedalg}, $A^{G,\alpha}$ is a
  $C_{0}(\gux)$-algebra with evaluation at $x$ inducing an isomorphism
  of the fibres $A^{G,\alpha}(G\cdot x)$ with
  $A(x)^{G_{x},\alpha^{x}}$.  Hence the irreducible representations of
  $A^{G,\alpha}$ are given by pairs $[x,\pi]$ with $x\in X$ and
  $\pi\in \spec{A^{G_{x},\alpha^{x}}}$ so that
  $[x,\pi](a)=\pi\bigl(a(x)\bigr)$.  Furthermore $[x,\pi]=[y,\rho]$
  exactly when $(y,\rho)=(s\cdot x, \pi\circ \alpha_{s^{-1}\cdot x,
    s})$. Combining this with Lemma~\ref{lem-closed-mult}, we see that
  the map $(x,\pi)\mapsto [x,\pi]$ induces a bijection of $G\backslash
  \widehat A_{\fix}$ onto $\spec{A^{G,\alpha}}$.

  \begin{prop}
    \label{prop-gen-homeo}
    Let $A$ be a \xtg-algebra as above.  Then the map $(x,\pi)\mapsto
    [x,\pi]$ induces a homeomorphism of $G\backslash \widehat
    A_{\fix}$ and $\spec{A^{G,\alpha}}$.
  \end{prop}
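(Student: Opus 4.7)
The plan is to derive the homeomorphism from the pullback identification $A_{\fix}\cong q^{*}A^{G,\alpha}$ of Proposition~\ref{prop-pull-back-iso}. That pullback carries a canonical nondegenerate $*$-homomorphism $\iota:A^{G,\alpha}\to M(A_{\fix})$ acting by pointwise multiplication $(\iota(a)\cdot b)(x)=a(x)b(x)$. Under the fibre identifications of Lemmas~\ref{lem-genfixedalg} and~\ref{lem-closed-mult}, its fibre map $\iota_{x}:A^{G,\alpha}\to A_{\fix}(x)\cong A(x)^{G_{x},\alpha^{x}}$ is simply evaluation at $x$. Hence the continuous map $\iota^{*}:\widehat A_{\fix}\to \spec{A^{G,\alpha}}$ induced on spectra sends $(x,\pi)$ to $\pi\circ\iota_{x}=[x,\pi]$; it is $G$-invariant by the relation $[s\cdot x,\pi\circ\alpha_{s^{-1}\cdot x,s}]=[x,\pi]$, so it factors through the continuous bijection $\bar p:G\backslash\widehat A_{\fix}\to\spec{A^{G,\alpha}}$.

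The remaining task is to show $\bar p$ is open; equivalently, that $\iota^{*}$ is a quotient map. For this I would invoke the natural fibered-product description of the spectrum of a pullback $C_{0}$-algebra: for a $C_{0}(Y)$-algebra $B$ and a continuous open map $f:X\to Y$, one has a homeomorphism
\begin{equation*}
 \widehat{f^{*}B}\cong X\times_{Y}\spec{B}
\end{equation*}
compatible with the structural fibrings and restricting on fibres to the identity. Applied with $B=A^{G,\alpha}$ and $f=q$ this identifies $\widehat A_{\fix}\cong X\times_{\gux}\spec{A^{G,\alpha}}$, in such a way that $\iota^{*}$ becomes projection onto the second factor and the $G$-action on the left becomes the $G$-action on the first factor alone (the second factor being fixed by the cocycle relation above). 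The second-factor projection is the pullback along $\spec{A^{G,\alpha}}\to\gux$ of the orbit map $q:X\to\gux$; since $q$ is an open surjection, and both openness and surjectivity of continuous maps are preserved under pullback in $\mathrm{Top}$, this projection is itself an open continuous surjection, hence a quotient map. Therefore $\iota^{*}$ is a quotient map and $\bar p$ is a homeomorphism.

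The main obstacle is the fibered-product identification $\widehat{q^{*}A^{G,\alpha}}\cong X\times_{\gux}\spec{A^{G,\alpha}}$ itself, which requires knowing that the topology on the spectrum of a pullback $C_{0}(Y)$-algebra is determined by fibrewise and base data in the expected way. Should that prove subtle, an alternative route works directly with the Jacobson topology: $G$-invariant open subsets of $\widehat A_{\fix}$ correspond to $G$- and $C_{0}(X)$-invariant closed ideals of $A_{\fix}\cong q^{*}A^{G,\alpha}$, which by the pullback structure are exactly the pullbacks of closed ideals of $A^{G,\alpha}$; this yields the required bijection between open sets of $G\backslash\widehat A_{\fix}$ and open sets of $\spec{A^{G,\alpha}}$, showing $\bar p$ is open.
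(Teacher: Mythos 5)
Your argument is correct and follows essentially the same route as the paper: both rest on the identification $A_{\fix}\cong q^{*}A^{G,\alpha}$ of Proposition~\ref{prop-pull-back-iso} together with the description of the spectrum of a pullback as a fibered product $X\times_{\gux}\spec{A^{G,\alpha}}$ with the relative product topology, which is precisely the ingredient (\cite{raewil:tams85}*{Lemma~1.1}) the paper invokes. The only difference is cosmetic: where you deduce openness of the second-factor projection from the stability of open surjections under pullback in $\mathrm{Top}$, the paper verifies the same openness by lifting convergent sequences using the criterion of \cite{wil:crossed}*{Proposition~1.15}.
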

  \begin{proof}
    There is a natural homomorphism of $A^{G,\alpha}$ into the
    multiplier algebra $M(A_{\fix})$ and $[x,\pi]$ is just the
    restriction of the natural extension of $(x,\pi)$ to
    $M(A_{\fix})$.  Hence $(x,\pi)\mapsto [x,\pi]$ is continuous by
    general nonsense (see \cite{gre:am78}*{Proposition~9}).

    Thus it will suffice to see that the map is open. For this, it
    suffices to show that if $[x_{i},\pi_{i}]\to [x,\pi]$, then we can
    pass to a subsequence, relabel, and find $(y_{i},\rho_{i})\to
    (x,\pi)$ such that $[y_{i},\rho_{i}]=[x_{i},\pi_{i}]$ (see
    \cite{wil:crossed}*{Proposition~1.15}).  However, since
    $A^{G,\alpha}$ is a $C_{0}(\gux)$-algebra, we must have $G\cdot
    x_{i}\to G\cdot x$, and after passing to a subsequence, relabeling
    and adjusting the $\pi_{i}$ as necessary, we can assume that
    $x_{i}\to x$.  But \cite{raewil:tams85}*{Lemma~1.1} implies that
    \begin{equation*}
      \spec{q^{*}A^{G,\alpha}}=\set{\bigl(x,[y,\rho]\bigr)\in X\times
        \spec{A^{G,\alpha}}:y\in G\cdot x}
    \end{equation*}
    has the relative product topology.  Hence
    $\bigl(x_{i},[x_{i},\pi_{i}]\bigr) \to \bigl(x,[x,\pi]\bigr)$ in
    $\spec{q^{*}A^{G,\alpha}}$.  But the isomorphism of
    $q^{*}B^{G,\alpha}$ with $A_{\fix}$ given in
    Proposition~\ref{prop-pull-back-iso} intertwines $(x,[x,\pi])$
    with $(x,\pi)$.  Hence we must have $(x_{i},\pi_{i})\to (x,\pi)$
    in $\widehat A_{\fix}$.
  \end{proof}

  We want to apply the previous discussion to $(A,\alpha)=(B\tensor
  \kltg, \beta\tensor\Ad\rho_{G})$. Proposition~\ref{prop-ind-gx}
  implies that the map $(x,\sigma)\mapsto (x,\ind_{x}\sigma)$ is a
  $G$-equivariant map of $\stabxbh$ onto
  $\bigl(\bigl(B\tensor\kltg\bigr)_{\fix}\bigr)^{\wedge}$.  This gives
  us a natural choice of a topology for $\stabxbh$.

  \begin{definition}
    \label{def-stab}
    We equip $\stabxbh$ with the topology making its identification
    with $\bigl(\bigl(B\tensor\kltg\bigr)_{\fix}\bigr)^{\wedge}$ a
    homeomorphism.
  \end{definition}

  Having made this definition, our desired description of the spectrum
  of $B\rtimes_{\beta}G$ follows immediately from
  Proposition~\ref{prop-gen-homeo} and Theorem~\ref{thm-EE2.14}.

  \begin{thm}
    \label{thm-main-spec}
    Suppose that $B$ is a {strictly} proper continuous-trace
    \cs-algebra with spectrum $X$.  Then the map
    \begin{equation*}
      \Ind:\stabxbh \to \spec{B\rtimes_{\beta}G}
    \end{equation*}
    is continuous and open, and hence factors through a homeomorphism
    of $G\backslash \stabxbh$ with $\spec{B\rtimes_{\beta}G}$.
  \end{thm}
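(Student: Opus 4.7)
The plan is to simply assemble the theorem from the machinery already in place. Set $(A,\alpha):=\bigl(B\tensor\kltg,\beta\tensor\Ad\rho_{G}\bigr)$, which is itself a \xtg-algebra. Theorem~\ref{thm-EE2.14} identifies $A^{G,\alpha}$ with $B\rtimes_{\beta}G$, so in particular $\spec{A^{G,\alpha}}=\spec{B\rtimes_{\beta}G}$.

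Next, I would apply Proposition~\ref{prop-gen-homeo} to $(A,\alpha)$ to obtain a homeomorphism $G\backslash \widehat A_{\fix}\cong \spec{A^{G,\alpha}}$ induced by $(x,\pi)\mapsto[x,\pi]$. On the other hand, Definition~\ref{def-stab} declares that the map $\Psi:(x,\sigma)\mapsto(x,\ind_{x}\sigma)$ is, by fiat, a homeomorphism of $\stabxbh$ onto $\widehat A_{\fix}$. The commuting diagram in Proposition~\ref{prop-ind-gx} ensures that $\Psi$ is $G$-equivariant (where $G$ acts on $\widehat A_{\fix}$ via $s\cdot(x,\pi)=(s\cdot x,\pi\circ \alpha_{s^{-1}\cdot x,s})$ as in Lemma~\ref{lem-closed-mult}), so $\Psi$ descends to a homeomorphism of orbit spaces $G\backslash\stabxbh\cong G\backslash\widehat A_{\fix}$.

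Composing these two homeomorphisms produces a homeomorphism $G\backslash\stabxbh\to \spec{B\rtimes_{\beta}G}$, and lifting to $\stabxbh$ gives a continuous open surjection. The only remaining point is to verify that this composite agrees with the map $\Ind$ as defined in Theorem~\ref{thm-mackey-machine}. But this is essentially bookkeeping: by Lemma~\ref{lem-genfixedalg}, evaluation at $x$ identifies the fibre of $A^{G,\alpha}$ over $G\cdot x$ with $\bigl(B(x)\tensor\kltg\bigr)^{G_{x},\beta^{x}\tensor\Ad\rho_{G}}$, and under this identification $[x,\ind_{x}\sigma]$ is precisely the irreducible representation of $B\rtimes_{\beta}G$ described in Theorem~\ref{thm-mackey-machine}. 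Hence $\Ind=[\,\cdot\,]\circ\Psi$ factors through the homeomorphism claimed.

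I do not expect any genuine obstacle here; the only mildly technical point is identifying $\Ind$ with the composite $[\,\cdot\,]\circ\Psi$, and this is an immediate consequence of combining Lemma~\ref{lem-genfixedalg} with Proposition~\ref{prop-ind-gx}. All the real work has already been done in setting up $\stabxbh$ so that $\Psi$ is a homeomorphism and in proving the general Proposition~\ref{prop-gen-homeo} for \xtg-algebras.
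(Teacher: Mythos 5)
Your proposal is correct and follows exactly the route the paper takes: the paper states that the theorem ``follows immediately from Proposition~\ref{prop-gen-homeo} and Theorem~\ref{thm-EE2.14}'' after noting, via Proposition~\ref{prop-ind-gx}, that $(x,\sigma)\mapsto(x,\ind_{x}\sigma)$ is a $G$-equivariant identification of $\stabxbh$ with $\bigl(\bigl(B\tensor\kltg\bigr)_{\fix}\bigr)^{\wedge}$. Your write-up merely makes explicit the bookkeeping (equivariance of $\Psi$ and the identification of the composite with $\Ind$) that the paper leaves implicit.
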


  %% End of new stuff
}
\section{Actions of Countable Groups and Lie Groups}
\label{sec:acti-count-groups}

Our Theorem~\ref{thm-main-spec} reduces the problem of understanding
the topology on the spectrum of $B\rtimes_{\beta}G$ to understanding
the topology of $\stabxbh$.  In this section we want to give a nice
description of this topology first {in} the case that $G$ is discrete,
and then in the case $X=\widehat B$ is a manifold and $G$ is Lie group
acting smoothly on $X$.

Before stating our results, we need to recall some common terminology
regarding projective representations and the corresponding
$\omega$-representations.  First if $\tau$ is an
$\omega'$-representation and $\sigma$ is an $\omega$-representation of
the group $K$, then we say $\tau$ is a \emph{subrepresentation} of
$\sigma$ (written $\tau\le \sigma$) if $[\omega]=[\omega']$ in
$H^{2}(K,\T)$ and if $f:G\to\T$ is a Borel map such that
$\omega=\delta f\cdot \omega'$, then the $\omega$-representation
$\tau':= f\cdot \tau$ is unitarily equivalent to a subrepresentation
of $\sigma$.  {Note also that} if $L$ is a closed subgroup of $K$ and
if $\omega\in Z^{2}(K,\T)$, then the class of $\omega\restr L$ in
$H^{2}(L,\T)$ depends only on the class of $\omega$ in $H^{2}(K,\T)$.

Our main results are as follows.

\begin{thm}
  \label{thm-main1}
  Suppose that $\beta:G\to \Aut B$ is a {strictly} proper action of a
  countable group $G$ on a separable continuous-trace \cs-algebra $B$
  with spectrum $X$.  Then $(x_{n},\sigma_{n})\to (x,\sigma)$ in
  $\stabxbh$ if and only if
  \begin{enumerate}
  \item $x_{n}\to x$ in $X$, and
  \item there is a $N\in\N$ such that for all {$n\ge N$} we have
    $G_{x_{n}}{\subseteq} G_{x}$ and $\sigma_{n}\le
    \sigma\restr{G_{x_{n}}}$.
  \end{enumerate}
\end{thm}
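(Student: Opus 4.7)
By Theorem~\ref{thm-main-spec} the space $\stabxbh$ is, by definition, the spectrum of the $C_{0}(X)$-algebra $(B\tensor\kltg)_{\fix}$ from Lemma~\ref{lem-closed-mult}. Continuity of the projection $(y,\tau)\mapsto y$ gives condition~(a) at once, and the eventual inclusion $G_{x_{n}}\subseteq G_{x}$ in~(b) is a purely topological consequence of properness together with discreteness: only finitely many $g\in G$ translate a given compact neighborhood of $x$ into itself, and for each such $g\notin G_{x}$ continuity of the action yields a smaller neighborhood of $x$ on which $g$ fails to stabilize. The nontrivial content is therefore the subrepresentation condition $\sigma_{n}\le\sigma\restr{G_{x_{n}}}$.

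I would attack this via a local model. Write $K:=G_{x}$ and choose an open $K$-invariant neighborhood $U$ of $x$ with $gU\cap U=\emptyset$ for $g\notin K$ and with $G_{y}\subseteq K$ for every $y\in U$; on $U$ the $G$-fixed-point analysis coincides with the $K$-fixed-point analysis on $B\restr{U}$. Using the continuous-trace structure to trivialize $\B\restr{U}$ (stabilizing if necessary, which is harmless), and invoking Lemma~\ref{lem-ext-equ} to absorb an exterior-equivalent cocycle, arrange that $B\restr{U}\cong C_{0}(U,\K(\H))$ with the $K$-action in the concrete form $\alpha_{k}(f)(y)=V(k)f(k^{-1}\cdot y)V(k)^{*}$ for a single $\omega_{x}$-representation $V\colon K\to U(\H)$; the induced $G_{y}$-action on $B(y)$ is then $\Ad V\restr{G_{y}}$ for each $y\in U$, and $[\omega_{y}]=[\omega_{x}\restr{G_{y}}]$.

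The crucial tool is a nonzero $K$-invariant element $e_{\sigma}:=p\tensor 1_{\H_{\sigma}}$ in the $\sigma$-summand $\K(\ltgvs)\tensor 1_{\H_{\sigma}}$ of $\K(\H\tensor L^{2}(G))^{K,\Ad(V\tensor\rho_{G})}$ from Remark~\ref{rem-summary}, where $p$ is a nonzero finite-rank projection. Because $e_{\sigma}$ is $K$-invariant it is $G_{y}$-invariant for every $y\in U$, so after multiplication by a bump function it gives a global section $e\in(B\tensor\kltg)_{\fix}$ with $e(y)=e_{\sigma}$ near $x$. Proposition~\ref{prop-mult-sig-in-tau}, applied to the subgroup $L=G_{x_{n}}\subseteq K$, tells us that the $\tau$-component of $e_{\sigma}$ in the $G_{x_{n}}$-block decomposition is nonzero precisely when $\tau\le\sigma\restr{G_{x_{n}}}$. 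This settles necessity of the subrepresentation condition: if $\sigma_{n}\not\le\sigma\restr{G_{x_{n}}}$ along a subsequence, then $[x,\sigma](e)\neq 0$ while $[x_{n},\sigma_{n}](e)=0$ along that subsequence, contradicting that the Jacobson-open set $\{\pi:\pi(e)\neq 0\}$ must eventually contain $(x_{n},\sigma_{n})$.

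For sufficiency, the same apparatus runs in reverse. Under (a) and~(b) the section $e$ shows that the $\sigma$-block at $y=x$ extends to a continuous sub-bundle of the fixed-point bundle over a neighborhood of $x$; by Lemma~\ref{lem-genfixedalg} and Theorem~\ref{thm-EE2.14} this sub-bundle is a continuous bundle of $\K$'s, and Proposition~\ref{prop-mult-sig-in-tau} together with the hypothesis $\sigma_{n}\le\sigma\restr{G_{x_{n}}}$ places the $\sigma_{n}$-summand at $x_{n}$ inside it; the standard characterization of convergence in the spectrum of a continuous bundle of type-I algebras then yields $(x_{n},\sigma_{n})\to(x,\sigma)$. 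The main obstacle throughout is producing the clean local form $\Ad V$ in paragraph~2: realizing the $K$-action at a non-free fixed point in this form combines the Dixmier--Douady classification of continuous-trace \cs-algebras with equivariant local triviality and the exterior-equivalence bookkeeping of Lemma~\ref{lem-ext-equ}, and this is where the most delicate analysis is concentrated; once the local model is in place, the rest of the proof is driven by the multiplicity identity of Proposition~\ref{prop-mult-sig-in-tau}.
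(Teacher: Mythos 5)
Your overall strategy---reduce to a local model in which the $K=G_{x}$-action is implemented by a single $\omega_{x}$-representation $V$, and then read off convergence from the block-multiplicity statement of Proposition~\ref{prop-mult-sig-in-tau}---is the same strategy the paper uses. But the step you defer to the end (``producing the clean local form $\Ad V$ in paragraph~2'') is not a technical footnote; it is the crux of the theorem, and as you have set it up it does not go through. You want a single trivialization $B\restr U\cong C_{0}(U,\K(\H))$ over an open neighborhood $U$ of $x$ in which $\beta^{y}=\Ad(V\restr{G_{y}})$ \emph{simultaneously for all} $y\in U$. The available classification tool (\cite{echwil:jot01}*{Theorem~5.4}, which the paper invokes) classifies actions of a \emph{fixed} compact group on $C_{0}(Y,\K(\H))$ up to exterior equivalence by the Mackey-obstruction map into the fixed group $H^{2}(L,\T)$; over your $U$ the stabilizers $G_{y}$ vary and the obstructions $[\omega_{y}]$ live in different groups $H^{2}(G_{y},\T)$, so no such result applies directly, and nothing in your sketch supplies the simultaneous equivariant trivialization. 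The paper's proof is organized precisely to avoid this: it first passes to a subsequence on which $G_{x_{n}}$ equals a fixed subgroup $L\subseteq G_{x}$ (possible because $G_{x}$ is finite, hence has only finitely many subgroups), restricts the ``fix'' algebra to $S=\set{x_{n}}\cup\sset{x}\cong\N_{\infty}$, and only there---using that every bundle over $\N_{\infty}$ is trivial and that $H^{2}(L,\T)$ is finite, so the obstruction is eventually constant equal to $[\omega_{x}\restr L]$---obtains the exterior equivalence with the constant field $\id\tensor\Ad(V\restr L)$ and the identification with $\set{a\in C(\N_{\infty},D):a(\infty)\in C}$ (Lemma~\ref{lem-restr-to-ninf}). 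Without that reduction your element $e$ with $e(y)=e_{\sigma}$ near $x$ need not lie in $(B\tensor\kltg)_{\fix}$ at all, since $e_{\sigma}$ is invariant for $\Ad(V\restr{G_{y}}\tensor\rho_{G})$ but not for the actual (a priori only exterior-equivalent, with $y$-dependent cocycle) action at $y$.

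Granting the local model, your necessity argument via the open set $\set{\pi:\pi(e)\ne 0}$ is correct and is a mildly different (and arguably slicker) way to extract the contradiction than the paper's Lemma~\ref{lem-n-infinity}, which instead uses continuity of restriction in the Fell topology and \cite{sch:aim80}*{Theorem~2.2}. Your sufficiency argument, however, is also underdeveloped: the corner cut out by $e$ over $U$ is not obviously a continuous bundle of elementary algebras (the fibres $\bigoplus_{\tau\le\sigma\restr{G_{y}}}$ jump with $G_{y}$), and what is actually needed is the estimate in the converse half of Lemma~\ref{lem-n-infinity} --- namely that $\sigma_{n}\le\sigma\restr{G_{x_{n}}}$ forces $\|\sigma(a(\infty))\|\le\|\rho_{n}(a(n))\|+\|a(n)-a(\infty)\|$ for sections $a$, whence $[x_{n},\sigma_{n}](a)=0$ for all $n$ implies $[x,\sigma](a)=0$. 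I would regard the proposal as having the right skeleton but a genuine gap at its load-bearing step, which is exactly the content of Lemma~\ref{lem-restr-to-ninf} and the constant-stabilizer subsequence trick.
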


We say that a Lie group action $\beta:G\to\Aut B$ on a \cs-algebra $B$
with spectrum $X$ is \emph{differentiable} if $X$ is a manifold and
each $s\in G$ acts by a diffeomorphism on $X$.
\begin{thm}
  \label{thm-main2}
  Suppose that $\beta:G\to \Aut B$ is a {strictly} proper
  differentiable action of a Lie group $G$ on a separable
  continuous-trace \cs-algebra $B$.  Then $(x_{n},\sigma_{n})\to
  (x,\sigma)$ if and only if
  \begin{enumerate}
  \item $x_{n}\to x$ in $X$, and
  \item each subsequence of $\sset{(x_{n},\sigma_{n})}$ has a
    subsequence $\sset{(y_{l},\rho_{l})}$ such that there is a
    sequence $\sset{s_{l}}$ in $G$ such that $s_{l}\to e$ in $G$ and a
    closed subgroup $L\subseteq G_{x}$ such that for all $l$,
    \begin{enumerate}
    \item $G_{s_{l}\cdot y_{l}}=s_{l}G_{y_{l}}s_{l}^{-1}=L$, and
    \item $s_{l}\cdot \rho_{l} \le \sigma\restr L$.
    \end{enumerate}
  \end{enumerate}
\end{thm}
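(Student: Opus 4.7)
The plan is to reduce, via Theorem~\ref{thm-main-spec} and Definition~\ref{def-stab}, to understanding convergence in the spectrum of $(B\tensor\kltg)_{\fix}$. Beyond the discrete case (Theorem~\ref{thm-main1}), the essential new ingredient is the slice (tube) theorem for proper differentiable actions of a Lie group on a manifold: near each $x\in X$ there is a $G_x$-invariant slice $S$ with tubular neighborhood $G\times_{G_x}S$, and the action of the compact Lie group $G_x$ on $S$ is equivariantly linearizable by Bochner's theorem.

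For necessity, $x_n\to x$ is automatic from the $C_0(X)$-algebra structure. For condition (b), start with an arbitrary subsequence, relabeled $\{(y_l,\rho_l)\}$. The tube theorem provides $s_l\to e$ in $G$ with $s_l\cdot y_l\in S$ and $s_lG_{y_l}s_l^{-1}\subseteq G_x$. The principal-orbit theorem for the linear $G_x$-action on $S$ reduces, after a further subsequence, all of these stabilizers to a single conjugacy class inside $G_x$; within that class the subgroups form the smooth compact manifold $G_x/N_{G_x}(L_0)$, and by a Chabauty-convergence plus adjustment step we may choose elements $g_l\to e$ in $G_x$ so that, after replacing $s_l$ by $g_l^{-1}s_l\to e$, the subgroups $s_lG_{y_l}s_l^{-1}$ become exactly equal to some fixed closed $L\subseteq G_x$. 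The key remaining assertion is $s_l\cdot\rho_l\le\sigma\restr L$: here I would pass to a smaller $L$-invariant neighborhood of $x$ in $S$ on which $L$ is contained in every stabilizer, form the $L$-fixed-point section algebra via Lemma~\ref{lem-genfixedalg}, and observe that $(B\tensor\kltg)_{\fix}$ includes into it. Because $(s_l\cdot y_l,\,s_l\cdot\rho_l)\to(x,\sigma)$ in $\stabxbh$ by continuity of the $G$-action, the induced convergence in the spectrum of the $L$-fixed-point algebra, combined with Proposition~\ref{prop-mult-sig-in-tau} (which at $x$ identifies the $L$-blocks containing the $\sigma$-block as exactly those of types $\tau\le\sigma\restr L$), forces $s_l\cdot\rho_l$ to satisfy the required subordination for all large $l$. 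Lemma~\ref{lem-ext-equ} is invoked to coherently identify the Borel lifts $V_y$ of $\beta^y$ along the slice so that this inclusion respects the block decompositions.

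Sufficiency proceeds by reversing the construction: after a subsequence setup we are reduced to the case $G_{y_n}=L\subseteq G_x$ and $\rho_n\le\sigma\restr L$ (the $s_n=e$ case), and continuity of sections of the $L$-fixed-point section algebra over the slice, together with Proposition~\ref{prop-mult-sig-in-tau}, produces a witness section of $(B\tensor\kltg)_{\fix}$ meeting the $\sigma$-block at $x$ and the $\rho_n$-blocks at $y_n$; translating back by $s_n^{-1}\to e$ via the continuous $G$-action then yields $(x_n,\sigma_n)\to(x,\sigma)$. The main obstacle is the simultaneous handling of the varying Mackey obstructions $\omega_y$ on the nearby stabilizers $G_y$: although these are coherently the restrictions of $[\omega_x]$ at the cohomological level, fixing concrete Borel representatives and unitary lifts $V_y$ compatibly along $S$ --- so that the subrepresentation relation $\le$ and the exterior-equivalence Lemma~\ref{lem-ext-equ} can be applied uniformly across the sequence --- is the delicate technical heart of the argument, and it is what forces the statement to allow both a conjugation by $s_l\to e$ and the passage to a subsequence.
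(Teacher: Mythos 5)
Your overall architecture matches the paper's: reduce to the spectrum of $(B\tensor\kltg)_{\fix}$, use Palais's slice theorem to conjugate the sequence into a slice and its stabilizers into $G_{x}$, use local finiteness of orbit types to fix a single conjugacy class of stabilizers after passing to a subsequence, and compare blocks via Proposition~\ref{prop-mult-sig-in-tau}. However, there is a genuine gap at exactly the point you flag as ``the delicate technical heart'': you never actually produce the coherent identification of the fibrewise fixed-point algebras along the sequence, and without it the deduction of $s_{l}\cdot\rho_{l}\le\sigma\restr L$ does not follow. The block of type $\tau$ in $\bigl(B(y)\tensor\kltg\bigr)^{L,\beta^{y}\tensor\Ad\rho_{G}}$ is only defined relative to a choice of Borel lift $V_{y}$ of $\beta^{y}\restr L$, and the actions $\beta^{y_{l}}\restr L$ are a priori only pointwise conjugate to $\Ad V_{x}\restr L$, not given by a constant field; so ``the inclusion respects the block decompositions'' is precisely what has to be proved, not observed. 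The paper closes this in Lemma~\ref{lem-restr-to-ninf}: restrict to the closed set $S=\sset{y_{l}}\cup\sset{x}\cong\N_{\infty}$, invoke the classification of locally inner actions over $\N_{\infty}$ (\cite{echwil:jot01}*{Theorem~5.4}) together with the discreteness of $H^{2}(L,\T)$ for compact $L$ to obtain an exterior equivalence with the constant field $\id\tensor\Ad V\restr L$, normalize the cocycle so that $u(\cdot,\infty)\equiv 1$, and check that the implementing unitaries $W_{n}$ of Lemma~\ref{lem-ext-equ} then converge strongly to $1$; only after this does the restricted algebra become $\set{a\in C(\N_{\infty},D):a(\infty)\in C}$, to which Lemma~\ref{lem-n-infinity} and Example~\ref{ex-c-in-d} apply to give the subordination in both directions.

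A secondary but real problem is your proposed domain for the $L$-fixed-point section algebra: an $L$-invariant neighborhood of $x$ in the slice on which $L$ is contained in every stabilizer generally does not exist, since points arbitrarily close to $x$ in the slice can have trivial stabilizer. The restriction must be taken to the sequence itself (together with its limit), as in Lemma~\ref{lem-fin-orb-types}, which is also where the convergence $[\omega_{y_{l}}]\to[\omega_{x}\restr L]$ is turned into eventual equality. With those two repairs --- restricting to $S\cong\N_{\infty}$ and running the exterior-equivalence trivialization of Lemma~\ref{lem-restr-to-ninf} --- your outline coincides with the paper's proof.
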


First some preliminary observations.  Recall that if $D$ is a
\cs-subalgebra of $\K(\H)$, then $D\cong \bigoplus_{\tau\in\widehat D}
\K(\H_{\tau})$.  Let $p_{\tau}$ be the projection in $M(D)$
corresponding to $\K(\H_{\tau})$.  If $C$ is a \cs-subalgebra of $D$,
then $\tau\restr C$ maps $C$ onto
$C_{\tau}:=p_{\tau}Cp_{\tau}\subseteq \K(\H_{\tau})$.  If we decompose
$C_{\tau}\cong \bigoplus_{\sigma\in \widehat C_{\tau}}$, then
$\widehat C_{\tau}$ can be identified with those $\sigma\in \widehat
C$ which appear as subrepresentations of $\tau\restr C$; that is,
$\widehat C_{\tau}=\set{\sigma\in \widehat C:\sigma\le \tau\restr C}$.
We want to apply these observations to the following situation.

\begin{example}
  \label{ex-c-in-d}
  Let $L$ be a closed subgroup of a compact subgroup $K$ {of} $G$.
  Suppose that $V:K\to U(\H)$ is an $\omega$-representation.  Then
  \begin{equation*}
    D:=\bigl(\K\bigl(\H\tensor
    L^{2}(G)\bigr)\bigl)^{L,\Ad(V\tensor\rho_{G})}
  \end{equation*}
  is a subalgebra of $ \K(\H\tensor L^{2}(G))$ which contains the
  fixed-point algebra
  \begin{equation*}
    C:= \bigl(\K\bigl(\H\tensor L^{2}(G)\bigr)\bigr)^{K,\Ad(V\tensor\rho_{G})}.
  \end{equation*}
  Thus if we let $\ind_{K}:\widehat K_{\omega}\to \widehat C$ and
  $\ind_{L}:\widehat L_{\omega}\to \widehat D$ be the bijections from
  Remark~\ref{rem-summary}, then after combining the above
  considerations with Proposition~\ref{prop-mult-sig-in-tau}, we see
  that
  \begin{equation*}
    \ind_{K}\sigma\le (\ind_{L}\tau)\restr C\Longleftrightarrow \tau\le
    \sigma\restr L.
  \end{equation*}
\end{example}

\begin{lemma}
  \label{lem-n-infinity}
  Let $\N_{\infty}$ be the one-point compactification of $\N$ and let
  $C\subseteq D\subseteq \K(\H)$ be \cs-subalgebras.  Let
  \begin{equation*}
    A:=\set{f\in C(\N_{\infty},D):f(\infty)\in C}.
  \end{equation*}
  Then $\widehat A=(\N\times \widehat D)\coprod \widehat C$ and a
  sequence $\sset{(n,\rho_{n})}$ in $\N\times\widehat D$ converges to
  $\sigma\in \widehat C$ if and only if there is an $N\in\N$ such that
  $\sigma \le \rho_{n}\restr C$ for all $n\ge N$.
\end{lemma}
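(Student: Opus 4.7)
The plan is first to identify $\widehat{A}$ as a set using the $C(\N_{\infty})$-algebra structure of $A$, and then to characterise convergence $(n,\rho_{n})\to \sigma$ via lower semicontinuity of the seminorms $\pi\mapsto\|\pi(a)\|$ on $\widehat A$. The set-theoretic identification is essentially tautological: $A$ is a $C(\N_{\infty})$-algebra whose fibre at $n<\infty$ is $D$ (since $\{n\}$ is clopen in $\N_{\infty}$, so any $d\in D$ can be realised as $f(n)$ for some $f\in A$ vanishing at~$\infty$), and whose fibre at $\infty$ is $C$ (using an approximate unit in $C_{0}(\N)$ to cut off sections vanishing at $\infty$). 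Hence $\widehat{A}$ is the disjoint union of its fibre spectra, giving the bijection $\widehat A \cong (\N\times \widehat D)\coprod \widehat C$.

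For the easy direction of the convergence statement, suppose $\sigma\le \rho_{n}|_{C}$ for all $n\ge N$. Then there is an isometry $V_{n}\colon \H_{\sigma}\to \H_{\rho_{n}}$ intertwining $\sigma$ with a subrepresentation of $\rho_{n}|_{C}$, so $\|\sigma(c)\|\le \|\rho_{n}(c)\|$ for every $c\in C$ and $n\ge N$. Given $f\in A$, the continuity $\|f(n)-f(\infty)\|\to 0$ combined with this bound applied to $c=f(\infty)$ yields
\begin{equation*}
  \|\sigma(f(\infty))\| \le \|\rho_{n}(f(\infty))\| \le \|\rho_{n}(f(n))\| + \|f(n)-f(\infty)\|,
\end{equation*}
whence $\|\pi_{\infty}(f)\|\le \liminf_{n}\|\pi_{n}(f)\|$, where $\pi_{n}$ and $\pi_{\infty}$ denote the irreducible representations of $A$ corresponding to $(n,\rho_{n})$ and $\sigma$. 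Since $A$ is separable, $\widehat A$ is second countable and convergence there is characterised precisely by this liminf inequality holding for every $a\in A$; hence $(n,\rho_{n})\to\sigma$.

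The converse is the main step. Assume $(n,\rho_{n})\to \sigma$ and, for a contradiction, pass to a subsequence (which still converges to $\sigma$) along which $\sigma\not\le \rho_{n_{k}}|_{C}$. The key structural input is that a \cs-subalgebra of $\K(\H)$ is a $c_{0}$-direct sum of elementary \cs-algebras, so $C\cong\bigoplus_{\mu\in\widehat C}\K(\H_{\mu})$, with central projections $e_{\mu}\in M(C)$ onto each summand. Choose a nonzero $c_{0}\in e_{\sigma}C$; then $\sigma(c_{0})\ne 0$ while $\mu(c_{0})=0$ for every $\mu\in\widehat C$ distinct from $\sigma$. Decomposing $\rho_{n_{k}}|_{C}$ into its $\widehat C$-isotypes and using that $\sigma$ does not appear, we conclude $\rho_{n_{k}}(c_{0})=0$. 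Letting $f\in A$ be the constant section $f\equiv c_{0}$ (which lies in $A$ because $c_{0}\in C\subseteq D$), we obtain $\pi_{n_{k}}(f)=0$ while $\pi_{\infty}(f)=\sigma(c_{0})\ne 0$, contradicting the liminf inequality forced by $(n_{k},\rho_{n_{k}})\to\sigma$. The main obstacle is exactly this $c_{0}$-direct sum decomposition of $C\subseteq\K(\H)$, which supplies the diagonal separating element $c_{0}$; once that is in hand, the rest of the argument is a routine comparison of norms on a constant section.
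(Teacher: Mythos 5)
Your proposal is correct. The set-theoretic identification of $\widehat A$ and the ``easy'' direction are essentially the paper's own argument: the paper verifies $\bigcap_{n}\ker(n,\rho_{n})\subseteq\ker\sigma$ via exactly the norm estimate you use ($\|\sigma(a(\infty))\|\le\|\rho_{n}(a(n))\|+\|a(n)-\tilde a(n)\|$), applied to subsequences; you merely repackage this as the lower-semicontinuity criterion $\|\pi_{\infty}(f)\|\le\liminf_{n}\|\pi_{n}(f)\|$, which is an equivalent formulation. The genuinely different part is the hard direction. The paper argues by contradiction through representation theory of the \emph{pair} $(A,B)$ with $B=C(\N_{\infty},C)$: it uses continuity of restriction $\Rep(A)\to\Rep(B)$ in the Fell topology to get $\rho_{n}\restr C\to\sigma$ in $\Rep(C)$, then invokes Schochetman's theorem to extract irreducible subrepresentations $\sigma_{n}\le\rho_{n}\restr C$ with $\sigma_{n}\to\sigma$ in $\widehat C$, and finally uses discreteness of $\widehat C$ to force $\sigma_{n}=\sigma$ eventually. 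You instead exploit the structure of $C$ as a \cs-subalgebra of $\K(\H)$ --- a $c_{0}$-sum of elementary algebras --- to produce a single element $c_{0}$ supported on the $\sigma$-block, observe that $\sigma\not\le\rho_{n_{k}}\restr C$ forces $\rho_{n_{k}}(c_{0})=0$ (since every nondegenerate representation of such a $C$ is a direct sum of irreducibles, each killing all blocks but its own), and contradict lower semicontinuity on the constant section $f\equiv c_{0}$. Your route is more elementary and self-contained: it avoids the Fell topology on $\Rep$ and the external citation entirely, at the price of using the block decomposition of $C$ explicitly rather than implicitly. The paper's route via Schochetman is the one that would survive if $\widehat C$ were not discrete, but in the present setting both arguments ultimately rest on that discreteness, and yours gets there faster.
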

\begin{proof}
  Assume that $(n,\rho_{n})\to \sigma$ in $\widehat A$.  If the
  assertion in the lemma is false, then we can pass to a subsequence,
  relabel, and assume that for all $n$, $\sigma\not\le \rho_{n}\restr
  C$.

  Let $B=C(\N_{\infty}C)$ viewed as a subalgebra of $A$.  Since
  restriction gives a continuous map from $\Rep(A)\to \Rep(B)$ in the
  Fell topology (see \cite{ech:mams96}*{\S1.2}), we have
  $(n,\rho_{n})\restr C\to \sigma$ in $\Rep(B)$.  Then, identifying
  $C$ with the constant functions in $C(\N_{\infty},C)$, we see that
  $\rho_{n}\restr C\to \sigma$ in $\Rep(C)$.  Since $\rho_{n}\restr C$
  decomposes as a direct sum of irreducibles,
  \cite{sch:aim80}*{Theorem~2.2} implies that, after passing to a
  subsequence and relabeling, we can find irreducible
  subrepresentations $\sigma_{n}\le \rho_{n}\restr C$ such that
  $\sigma_{n}\to \sigma$ in $\widehat C$.  Since $\widehat C$ is
  clearly discrete, we eventually have $\sigma_{n}=\sigma$ which is a
  contradiction.

  Conversely, assume that $\sigma\le \rho_{n}\restr C$ for all $n\ge
  N$.  Let $a\in C(\N_{\infty},D)$ be such that
  $(n,\rho_{n})(a)=\rho_{n}(a(n))=0$ for all $n\in \N$.  We want to
  see that $a(\infty)=0$.  Let $\tilde a\in A$ be the constant
  function with value $a(\infty)$.  Then $\|a(n)-\tilde a(n)\|\to 0$
  with $n$.  Since $\sigma$ is a subrepresentation of $\rho_{n}\restr
  C$, this implies that
  \begin{equation*}
    \|\sigma(a(\infty))\|\le \|\rho_{n}(a(\infty))\| =
    \|\rho_{n}\bigl(a(n)-\tilde a(n)\bigr) \| \le \|a(n)-\tilde
    a(n)\|
  \end{equation*}
  for any $n\ge N$.  Hence $\sigma(a(\infty))=0$.  Since we can apply
  this to any subsequence of $\sset{(n,\rho_{n})}$, it follows that
  $(n,\rho_{n})\to \sigma$ as claimed.
\end{proof}

\begin{lemma}
  \label{lem-restr-to-ninf}
  Suppose that $\beta:G\to\Aut B$ is a {strictly} proper action of a
  second countable locally compact group on a separable, \emph{stable}
  continuous-trace \cs-algebra $B$ with spectrum $X$.  Assume that
  $\sset{x_{n}}$ is a sequence in $X$ converging to $x$ such that
  \begin{enumerate}
  \item for all $n\not=m$, $x_{n}\not= x_{m}\not=x$, and
  \item there is a fixed subgroup $L\subseteq G_{x}:=K$ such that
    $G_{x_{n}}=L$ for all $n$.
  \end{enumerate}
  Moreover, let $\pi:B\to \K(\H)$ be an irreducible representation of
  $B$ corresponding to $x$ and let $V:K\to U(\H)$ be an
  $\omega$-representation implementing the action of $K$ on $B(x)\cong
  \K(\H)$.  Let
  \begin{equation*}
    C:=\bigl(\K(\H\tensor
    L^{2}(G))\bigr)^{K,\Ad(V\tensor\rho_{G})}\quad\text{and}\quad D=\bigl(\K(\H\tensor
    L^{2}(G))\bigr)^{L,\Ad(V\tensor\rho_{G})}.
  \end{equation*}
  Then there exists $N\in \N$ such that, after identifying
  $S=\set{x_{n}:n\ge N}\cup \sset x$ with $\N_{\infty}$, we get an
  isomorphism~of
  \begin{equation*}
    \bigl(B\tensor\kltg\bigr)_{\fix,S}\quad\text{with}\quad \set{a\in
      C(\N_{\infty},D): a(\infty)\in C},
  \end{equation*}
  where $\bigl(B\tensor\kltg\bigr)_{\fix,S}$ is the restriction of the
  $C_{0}(X)$-algebra $(B\tensor\kltg)_{\fix}$ to the closed subset $S$
  of $X$.
\end{lemma}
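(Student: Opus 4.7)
The plan is to exploit the continuous-trace structure to trivialize the bundle near $x$, and then to reduce the comparison of the various fibre fixed-point algebras to the exterior-equivalence machinery of Lemma~\ref{lem-ext-equ}. First, since $B$ has continuous trace, there is an open neighborhood $U$ of $x$ and a $C_{0}(U)$-algebra isomorphism $B_{U}\cong C_{0}(U,\K(\H))$ with $\H:=\H_{x}$. Enlarging $N$ we may assume $\{x_{n}:n\ge N\}\cup\{x\}\subseteq U$, so that $S$ is homeomorphic to $\N_{\infty}$ and each fibre $(B\tensor\kltg)(y)$ is identified with $\K(\H\tensor L^{2}(G))$. Since $L\subseteq G_{y}$ for every $y\in S$, each $l\in L$ induces fibre automorphisms, and under the trivialization $y\mapsto \beta^{y}_{l}$ is a continuous map from $S$ into $PU(\H)$; in particular $\beta^{x_{n}}_{l}\to\beta^{x}_{l}$ in $PU(\H)$.

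The technical heart is to produce, for each $n\ge N$, a strongly continuous cocycle $u_{n}\colon L\to U(\H)$ in the sense of \eqref{eq:13} realising an exterior equivalence between the actions $\beta^{x_{n}}\restr L$ and $\beta^{x}\restr L$ of $L$ on $\K(\H)$, with the additional property that $u_{n}(l)\to 1_{\H}$ in the strong operator topology as $n\to\infty$. Since $\beta^{x_{n}}\restr L\to \beta^{x}\restr L$ pointwise in $PU(\H)$ and the Mackey obstructions are locally constant in $H^{2}(L,\T)$ along the sequence, the obstructions of $\beta^{x_{n}}\restr L$ and of $\beta^{x}\restr L$ coincide for $n$ large. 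Starting from any Borel lift $V_{n}$ of $\beta^{x_{n}}\restr L$ with multiplier $\omega\restr L$, an averaging argument over the compact group $L$ produces such a $u_{n}$ satisfying $u_{n}V\restr L=V_{n}$ and $u_{n}\to 1$ pointwise in the strong operator topology.

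With the $u_{n}$ in hand, Lemma~\ref{lem-ext-equ} supplies unitaries $\tilde W_{n}\in U(\H\tensor L^{2}(G))$ so that conjugation by $\tilde W_{n}$ maps $D$ onto $D_{n}:=\K(\H\tensor L^{2}(G))^{L,\Ad(u_{n}V\tensor \rho_{G})}$, which is precisely the $L$-fixed-point subalgebra of the fibre at $x_{n}$. Setting $\tilde W_{\infty}:=1_{\H\tensor L^{2}(G)}$, the convergence $u_{n}\to 1$ forces $\tilde W_{n}\to 1$ strongly. Define
\begin{equation*}
  \Psi\colon (B\tensor\kltg)_{\fix,S}\to \{a\in C(\N_{\infty},D):a(\infty)\in C\}
\end{equation*}
by $\Psi(a)(n):=\tilde W_{n}^{*}a(x_{n})\tilde W_{n}$ for finite $n$ and $\Psi(a)(\infty):=a(x)$. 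Fibre-wise, $\Psi$ is a bijection by Lemma~\ref{lem-ext-equ}, and $\Psi(a)(\infty)=a(x)\in C$ by the definition of the fixed-point subalgebra. Continuity at $\infty$ follows from the decomposition
\begin{equation*}
  \tilde W_{n}^{*}a(x_{n})\tilde W_{n}-a(x)=\tilde W_{n}^{*}\bigl(a(x_{n})-a(x)\bigr)\tilde W_{n}+\bigl(\tilde W_{n}^{*}a(x)\tilde W_{n}-a(x)\bigr),
\end{equation*}
where the first summand tends to zero by norm-continuity of the section $a$, and the second because $a(x)$ is a compact operator and $\tilde W_{n}\to 1$ strongly. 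A symmetric argument produces the inverse of $\Psi$.

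The principal obstacle is the construction of the cocycles $u_{n}$ with the strong-operator convergence to the identity; once these are in place, the identification of fibres via Lemma~\ref{lem-ext-equ} and the continuity verification above are essentially routine bookkeeping.
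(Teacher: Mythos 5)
Your overall strategy coincides with the paper's: trivialize $B$ over $S\cong\N_{\infty}$ (legitimate, since $B$ is stable with continuous trace), realize the fibrewise $L$-actions as $\Ad\bigl(u_{n}(\cdot)V\restr L\bigr)$ for cocycles $u_{n}$ tending strongly to $1$, and then transport everything to the constant field by the unitaries $\tilde W_{n}$ of Lemma~\ref{lem-ext-equ}. The bookkeeping at the end is fine; in particular your continuity check at $\infty$ (norm-continuity of the section plus compactness of $a(x)$ and strong convergence $\tilde W_{n}\to 1$) is correct.

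The gap sits exactly where you locate the ``technical heart,'' and the one-sentence argument offered there does not close it. First, the assertion that the Mackey obstructions stabilize for large $n$ requires both the continuity of $y\mapsto[\omega_{y}]\in H^{2}(L,\T)$ (\cite{echwil:jot01}*{Lemma~5.3}) and the discreteness of $H^{2}(L,\T)$ for compact $L$ (Moore); you state the conclusion without either ingredient. More seriously, the proposed ``averaging argument over $L$'' fails as stated. Given an arbitrary Borel lift $V_{n}$ of $\beta^{x_{n}}\restr L$ with multiplier $\omega\restr L$, the natural candidate $T_{n}=\int_{L}V_{n}(l)V(l)^{*}\,dl$ does intertwine $V_{n}$ and $V$, but it can vanish: the lift $V_{n}$ is determined only up to multiplication by a character $\chi\in\widehat L_{\ab}$, and already in the degenerate case $\beta^{x_{n}}\restr L=\beta^{x}\restr L$ with $V_{n}=\chi_{n}V\restr L$ for nontrivial $\chi_{n}$ one gets $T_{n}=\bigl(\int_{L}\chi_{n}(l)\,dl\bigr)1_{\H}=0$. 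So before any averaging can produce $u_{n}\to 1$, the lifts $V_{n}$ must be chosen coherently in $n$ and compatibly with $V$ at $\infty$ --- that is, one must trivialize an $\widehat L_{\ab}$-torsor over $\N_{\infty}$. This is precisely what the paper gets from \cite{echwil:jot01}*{Theorem~5.4} (together with the observation that every $\widehat L_{\ab}$-bundle over $\N_{\infty}$ is trivial): a single jointly continuous $u:L\times\N_{\infty}\to U(\H)$ with $u(\cdot,\infty)$ scalar-valued, which is then normalized to be $1$ at $\infty$. Without this input, or an explicit hands-on normalization of the $V_{n}$ replacing it, the convergence $u_{n}\to 1$ --- the only non-routine step in the whole proof --- is unsupported.
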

\begin{proof}
  Let $B_{S}$ be the restriction of $B$ to $S\cong\N_{\infty}$.  Since
  $B$ is a stable continuous-trace \cs-algebra, $B_{S}\cong
  {C(\N_{\infty},\K(\H))}$ and the isomorphism transports the actions
  $\beta^{x_{n}}$ on $L=G_{x_{n}}$ on $B(x_{n})$ to appropriate
  actions $\beta^{n}$ of $L$ on $\K(\H)$.  Also let $\beta^{\infty}$
  be the restriction {of} the given $K$-action $\beta^{x}$ on $\K(\H)$
  to $L$.  We obtain a {$C(\N_{\infty})$}-linear action $\beta^{L}$ of
  $L$ on $C(\N_{\infty},\K(\H))$ by
  $\beta^{L}_{l}(a)(n)=\beta^{n}\bigl(a(n)\bigr)$.  Since every
  $\widehat L_{\ab}$-bundle over $\N_{\infty}$ is trivial, it follows
  from \cite{echwil:jot01}*{Theorem~5.4} that $\beta^{L}$ is
  classified up to exterior equivalence by the continuous
  Mackey-obstruction map $y\mapsto [\omega_{y}]\in H^{2}(L,\T)$ on
  $\N_{\infty}$.  Since $L$ is compact, $H^{2}(L,\T)$ is discrete
  {(combine \cite[Corollary 1]{moo:tams64} with the results at the
    beginning of \cite[Chapter III]{moo:tams64b} which imply that
    $H^2(L,\T)$ is a countable, locally compact Hausdorff group, hence
    discrete).}
  % \footnote{\dpw I'm sorry Siegfried, I couldn't find a decent
  % reference.  It must be in \cite{moo:tams64b} somewhere, but I
  % couldn't pin it down.  If $G$ is compact, then $H^{2}(G,\T)$ is
  % locally compact and Hausdorff since $G$ is smooth in the sense of
  % our \cite{echwil:jot01}*{Definition~4.1} --- see
  % \cite{echwil:jot01}*{Remark~4.2}.  This means that $H^{2}(G,\T)$
  % is a locally compact \emph{group}!?! Then
  % \cite{moo:tams64b}*{Proposition~1.3} implies that if $G$ is
  % compact (or almost connected), then $H^{2}(G,\T)$ is countable.
  % Of course, a countable locally compact group is discrete.  Can you
  % find a better reference?}
  {Thus we} can assume that we have taken $N$ large enough so that
  $[\omega_{y}]=[\omega\restr L]$ for all $y$ (where $\omega$ is as in
  the statement of the lemma).  It follows from
  \cite{echwil:jot01}*{Theorem~5.4} that $\beta^{L}$ is exterior
  equivalent to the action defined by the constant field
  $\alpha=\id_{C(\N_{\infty})} \tensor \Ad V\restr L$.  Thus there is
  a continuous map $u:L\times\N_{\infty}\to U(\H)$ such that for all
  $n\in\N_{\infty}$, $\beta^{n}_{l}=\Ad\bigl(u(l, y)\cdot V_{l}\bigr)$
  and
  \begin{equation}
    \label{eq:6}
    u(lk,y)=u(l,y)V_{l}u(k,y)V_{l}^{*}\quad\text{for all $a\in
      \K(\H)$, $y\in \N_{\infty}$ and $l,k\in L$.}
  \end{equation}
  Since $\beta^{\infty}=\Ad V=\Ad\bigl(u(\cdot,\infty)V\bigr)$, it
  follows that $u(l,\infty)\in\T I_\H$ for all $l$.  Then \eqref{eq:6}
  implies that $l\mapsto u(l,\infty)$ is a character.  Multiplying
  each $u(\cdot,y)$ by the inverse of this character allows us to
  assume that $u(l,\infty)=1$ for all $l$.

  After identifying $S$ with $\N_{\infty}$, we have, by definition,
  \begin{multline*}
       \bigl(B\tensor\kltg\bigr)_{\fix,S} 
      \cong \bigl\{\, a\in C(\N_{\infty}, \K(\H\tensor L^{2}(G))) : \\
         \text{$a(y)\in \bigl(\K(\H\tensor
          L^{2}(G))\bigr)^{G_{y},\Ad(u(\cdot y)V\tensor
            \rho_{G})}$ for all $y\in \N_{\infty}$}\bigr\},
    \end{multline*} 
with $G_{y}=L$ if $y\in\N$ and $G_{\infty}=K$.

  For each $n\in\N$, let $W_{n}:L^{2}(G,\H)\to L^{2}(G,\H)$ be the
  unitary from Lemma~\ref{lem-ext-equ} corresponding to the cocycle
  $l\mapsto u(l,n)$ from $L$ into $U(\H)$.  It then follows -- see
  \eqref{eq:8} and recall that $u$ is strongly continuous with
  $u(l,\infty)=1$ for all $l$ -- that the sequence $\sset{W_n}$
  converges strongly to $1$.  Thus, if we define $W_{\infty}$ to be
  $1$, we get an isomorphism $\Phi:C(N_{\infty},\K(\H\tensor
  L^{2}(G)))\to C(\N_{\infty}, \K(\H\tensor L^{2}(G)))$ given by
  \begin{equation*}
    \Phi(a)(y)=W_{y}a(y)W_{y}\quad \text{for all $y\in \N_{\infty}$.}
  \end{equation*}
  Lemma~\ref{lem-ext-equ} implies that $\Phi$ maps $\bigl(B\tensor
  \kltg\bigr)_{\fix,S}$ onto 
\begin{multline*} \bigl\{a\in
        C(\N_{\infty},\K(\H\tensor L^{2}(G))):\\ \text{$a(y)\in
          \bigl(\K(\H\tensor L^{2}(G))\bigr)^{G_{y},\Ad(V\tensor
            \rho_{G})}$ for all $y\in \N_\infty$}\bigr\},
    \end{multline*}
  which in the notation of the lemma, is exactly $\set{a\in
    C(\N_{\infty},D):a(\infty)\in C}$.
\end{proof}

\begin{proof}[Proof of Theorem~\ref{thm-main1}]
  {If $\beta$ is a {strictly} proper action on $B$, then $\beta\tensor
    1$ is a {strictly} proper action on $B\tensor \K(\H)$.
    Furthermore, the corresponding ``$\fix$'' algebras, $(B\tensor
    L^{2}(G))_{\fix}$ and $(B\tensor\K(\H)\tensor L^{2}(G))_{\fix}$
    are Morita equivalent in such a way that the identification of the
    spectrum with $\stabxbh$ is preserved.  Thus we may as well assume
    from the onset that $B$ is stable (so that we can apply
    Lemma~\ref{lem-restr-to-ninf} when appropriate).}

  Now suppose that $(x_{n},\sigma_{n})\to (x,\sigma)$ in $\stabxbh$.
  Since $\bigl(B\tensor \kltg)\bigr)_{\fix}$ is a $C_{0}(X)$-algebra,
  we must have $x_{n}\to x$ in $X$.  Since $G$ is discrete, the action
  of $G$ on $X$ satisfies Palais's slice property (see
  Remark~\ref{rem-PS} and \cite{pal:aom61}*{Case 3 of
    Proposition~2.3.1}).  Hence we may assume that there is an open
  neighborhood $U$ of $x$ such that $G_{y}\subseteq G_{x}$ for all
  $y\in U$.  Thus there is {an} $N$ such that $n\ge N$ implies that
  $G_{x_{n}}\subseteq G_{x}$.  Hence it will suffice to assume that
  $x_{n}\to x$, $G_{x_{n}}\subseteq G_{x}$ for all $n$ and show that
  the failure of condition~(b) in the statement of the theorem results
  in a contradiction.

  Since $G_{x}$ must be a finite group, it can have only finitely many
  subgroups.  Thus we can pass to {a} subsequence and assume that
  there is a subgroup $L$ of $K:=G_{x}$ such that $G_{x_{n}}=L$ for
  all $n$ and such that condition~(b) fails for this sequence.  Let
  $S=\set{x_{n}:n\ge 1}\cup \sset x$.  As in the proof of
  Lemma~\ref{lem-restr-to-ninf}, we obtain a {$C(S)$}-linear action of
  $L$ on $C(S,\K(\H))$.  The Mackey obstruction at $x$ for this action
  is given by the restriction of the Mackey obstruction $[\omega_{x}]$
  of $K$ to $L$.  Since $x_{n}\mapsto [\omega_{x_{n}}]$ is continuous
  \cite{echwil:jot01}*{Lemma~5.3}, we have $[\omega_{x_{n}}]\to
  [\omega\restr L]$ in $H^{2}(L,\T)$.  But as $L$ is finite,
  $H^{2}(L,\T)$ is finite and there is {an} $N$ such that $n\ge N$
  implies that $[\omega_{x_{n}}]=[\omega_{x}\restr L]$.  We'll assume
  that this holds for all $n$.  Since we're assuming condition~(b)
  fails, there is no $N$ such that $\sigma_{n}\le \sigma\restr L$ for
  all $n\ge N$.  Passing to a subsequence, we can assume that
  $\sigma_{n}\not \le \sigma\restr L$ for all $n$.  The sequence
  $\sset{x_{n}}$ is either eventually constant {or} we can pass to a
  subsequence which, after relabeling, satisfies $x_{n}\not=
  x_{m}\not=x$ for all $n\not= m$.  In the eventually constant case,
  we have $\sigma_{n}\to \sigma$ in the discrete set $\widehat
  K_{\omega} \cong \bigl(\K(\H\tensor L^{2}(G))\bigr)^{K,\beta\tensor
    \Ad\rho}$.  But this implies that we eventually have
  $\sigma_{n}=\sigma$ which contradicts $\sigma_{n}\not\le \sigma$.
  Otherwise we are in the situation of Lemma~\ref{lem-restr-to-ninf}
  and we can assume that $(n,\sigma_{n})\to (\infty,\sigma)$ in
  $\set{a\in C(\N_{\infty},D):a(\infty)\in C}$ with $C$ and $D$ as in
  Lemma~\ref{lem-restr-to-ninf}.  Then a combination of
  Lemma~\ref{lem-n-infinity} and Example~\ref{ex-c-in-d} gives a
  contradiction.  Thus we have proved the forward implication in the
  Theorem.

  To prove the converse, let $(x_{n},\sigma_{n})$ and $(x,\sigma)$
  satisfy conditions (a)~and (b) in the Theorem.  It suffices to show
  that every subsequence of $\sset{(x_{n},\sigma_{n})}$ has a
  subsequence converging to $(x,\sigma)$.  Since every subsequence
  will still satisfy (a)~and (b), it will suffice to see that the
  given sequence has a subsequence converging to $(x,\sigma)$.  But
  either $\sset{x_{n}}$ must contain a constant subsequence equal to
  $x$ everywhere, or it has a subsequence satisfying the hypotheses of
  Lemma~\ref{lem-restr-to-ninf}.  In the case of a constant
  subsequence, condition~(b) translates to $\sigma_{n}=\sigma$ for all
  sufficiently large $n$ and this certainly implies convergence in
  $\stabxbh$.  In the second case, we can appeal to
  Lemma~\ref{lem-restr-to-ninf} which allows us to combine
  Lemma~\ref{lem-n-infinity} with Example~\ref{ex-c-in-d} to show that
  $(x_{n},\sigma_{n})\to (x,\sigma)$ in
  $\bigl(B\tensor\kltg\bigr)_{\fix,S} \cong \stabxbh$.
\end{proof}

We need a few more preliminaries before starting the proof of
Theorem~\ref{thm-main2}.
\begin{lemma}
  \label{lem-stab-iso}
  Suppose that $B$ is the section algebra of a continuous field of
  \cs-algebras over $X$, that $K$ is a compact subgroup of a second
  countable locally compact group $G$ and that $\beta:{K}\to\Aut B$ is
  an action of $K$ on $B$.  Then the algebras $A^{K}_{\fix}:=\set{b\in
    B\tensor\K(L^{2}(K)):b(x)\in \bigl(B(x)\tensor
    \K(L^{2}(K)\bigr)^{K_{x},\beta^{x}\tensor\Ad\rho_{K}}} $ and
  $A^{G}_{\fix}:=\set{b\in B\tensor \kltg:b(x)\in \bigl(B(x)\tensor
    \kltg\bigr)^{K_{x},\beta^{x}\tensor \Ad \rho_{G}} }$ are stably
  isomorphic.
\end{lemma}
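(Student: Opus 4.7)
The plan is to exhibit a concrete isomorphism $A^G_{\fix} \cong \K(L^2(G/K)) \otimes A^K_{\fix}$ and then absorb $\K(L^2(G/K))$ into a stabilization. The key observation is that, as a $K$-representation under the restriction of the right-regular representation, $L^2(G)$ decomposes as $L^2(G/K) \otimes L^2(K)$ with $K$ acting trivially on the first factor and by $\rho_K$ on the second.

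First I would choose a Borel cross-section $c:G/K\to G$ (which exists since $G$ is second countable) and define the unitary $U:L^2(G)\to L^2(G/K)\tensor L^2(K)$ by $(U\xi)(gK,k)=\xi(c(gK)k)$, exactly as in the discussion preceding Lemma~\ref{lem-ext-equ}. A direct calculation shows that $U$ intertwines $\rho_G\restr K$ with $1_{L^{2}(G/K)}\tensor \rho_K$. Hence $\Ad U$ gives a $K$-equivariant isomorphism
\begin{equation*}
B\tensor\kltg \;\cong\; B\tensor \K(L^{2}(G/K))\tensor\K(L^{2}(K)),
\end{equation*}
where the action on the right-hand side is $\beta\tensor 1\tensor \Ad\rho_K$, acting fiberwise over $X$ through $C_{0}(X)$-linearity.

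Second, because the $K_x$-action on the middle factor $\K(L^2(G/K))$ is trivial, taking $K_x$-fixed points commutes with tensoring by $\K(L^2(G/K))$: for each $x\in X$,
\begin{equation*}
\bigl(B(x)\tensor\kltg\bigr)^{K_{x},\beta^{x}\tensor\Ad\rho_G} \;\cong\; \K(L^{2}(G/K)) \tensor \bigl(B(x)\tensor\K(L^{2}(K))\bigr)^{K_{x},\beta^{x}\tensor\Ad\rho_K}.
\end{equation*}
Since the unitary $U$ is independent of $x$, these fiberwise isomorphisms assemble (by the argument of Lemma~\ref{lem-closed-mult} and Proposition~\ref{prop-pull-back-iso}) into a $C_0(X)$-linear isomorphism $A^G_{\fix} \cong \K(L^{2}(G/K))\tensor A^K_{\fix}$.

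Finally, tensoring both sides with $\K=\K(\H_{0})$ for an infinite-dimensional separable Hilbert space $\H_0$, we use that $\K(L^{2}(G/K))\tensor\K\cong\K$ (regardless of whether $G/K$ is finite or infinite) to conclude
\begin{equation*}
A^G_{\fix}\tensor\K \;\cong\; \K(L^{2}(G/K))\tensor A^K_{\fix}\tensor\K \;\cong\; A^K_{\fix}\tensor\K,
\end{equation*}
which is the desired stable isomorphism. The only subtle point is step two: one must verify that the local (fiberwise) description assembles into a global $C_{0}(X)$-algebra isomorphism, but this is automatic since the conjugating unitary $U$ depends only on the Borel section $c$ and not on the point $x\in X$, so it induces an honest automorphism of the section algebra $B\tensor\kltg$ that carries the $K$-action $\beta\tensor\Ad\rho_G$ to the decoupled action $\beta\tensor 1\tensor\Ad\rho_K$.
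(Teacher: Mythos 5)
Your proposal is correct and follows essentially the same route as the paper: both choose a Borel section $c:G/K\to G$, use it to build the unitary $L^{2}(G)\cong L^{2}(G/K)\tensor L^{2}(K)$ carrying $\rho_{G}\restr K$ to $1\tensor\rho_{K}$, and deduce $A^{G}_{\fix}\cong A^{K}_{\fix}\tensor\K\bigl(L^{2}(G/K)\bigr)$ fibrewise and hence globally. The only detail the paper makes explicit that you elide is the normalization of the invariant measure on $G/K$ via Weil's formula, needed for the section-based identification to be unitary; otherwise the arguments coincide.
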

\begin{proof}
  We will show that $A_{\fix}^{G}\cong A_{\fix}^{K}\tensor
  \K(L^{2}(G/K))$ when $G/K$ is equipped with the unique $G$-invariant
  measure such that $\int_{G}f(s)\,ds = \int_{G/K}\int_{K}
  f(sk)\,dk\,d\dot s$.  Since $G$ is second countable, there is a
  locally bounded Borel section for the quotient map $q:G\to G/K$.
  Then $c$ induces a Borel isomorphism of $G/K\times K $ onto $G$
  given by $(sK,k)\mapsto c(s)k$.  In turn, this induces an
  isomorphism $\Phi:L^{2}(G) \to L^{2}(G/K)\tensor L^{2}(K)$ given on
  continuous functions with compact support by
  $\Phi(\xi)(sK,k)=\xi(c(g)k)$.  It is clear that $\Phi$ transforms
  the restriction of the right-regular representation $\rho_{G}$ to
  $K$ to $1\tensor\rho_{K}$.  Thus we get an isomorphism of $B\tensor
  \kltg$ onto $B\tensor \K(L^{2}(K))\tensor \K(L^{2}(G/K))$ which
  transforms $\beta\tensor \Ad \rho_{G}\restr K$ with $\beta\tensor
  \Ad\rho_{K}\tensor\id$.  From this it is clear that
  $A^{G}_{\fix}\cong A^{K}_{\fix}\tensor \K(L^{2}(G/K))$.
\end{proof}

\begin{remark}
  The second countability assumption in Lemma~\ref{lem-stab-iso} is
  not necessary --- one always has a locally bounded Baire section and
  we could proceed as in \cite{wil:crossed}*{\S4.5} --- but the extra
  generality is unnecessary here so we omit the details.
\end{remark}

\begin{lemma}
  \label{lem-sub-o-rep}
  Suppose that $[\omega]\in H^{2}(K,\T)$ for some compact group $K$
  and let $\sigma:K\to U(\H_{\sigma})$ be an $\omega$-representation
  of $K$.  Suppose that $L$ is a closed subgroup of $K$ and $\rho:L\to
  U(\H_{\rho})$ is an $\omega\restr L$-representation of $L$ such that
  $\rho\le \sigma\restr L$.  For any $k\in K$, let $k\cdot
  \rho:kLk^{-1} \to U(\H_{\sigma})$ be given by $k\cdot
  \rho(klk^{-1})=\rho(l)$ for all $l\in L$.  Then $k\cdot \rho$ is a
  $\omega\restr{kLk^{-1}}$-representation with $k\cdot \rho\le
  \sigma\restr{kLk^{-1}}$.
\end{lemma}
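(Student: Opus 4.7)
The plan is twofold: first, to check that $k\cdot\rho$ is a multiplier representation whose cocycle lies in the class $[\omega\restr{kLk^{-1}}]\in H^2(kLk^{-1},\T)$; and second, to twist the isometry that realizes $\rho\le\sigma\restr L$ by $\sigma(k)$ in order to realize the subrepresentation relation $k\cdot\rho\le\sigma\restr{kLk^{-1}}$.

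A direct computation from the definition gives
\begin{equation*}
(k\cdot\rho)(kl_1k^{-1})(k\cdot\rho)(kl_2k^{-1})=\rho(l_1)\rho(l_2)=\omega(l_1,l_2)\,(k\cdot\rho)(kl_1l_2k^{-1}),
\end{equation*}
so $k\cdot\rho$ is literally an $\omega^k$-representation of $kLk^{-1}$, where $\omega^k(l_1',l_2'):=\omega(k^{-1}l_1'k,k^{-1}l_2'k)$. To see $[\omega^k]=[\omega\restr{kLk^{-1}}]$ I would exploit the fact that the inner automorphism $l\mapsto klk^{-1}$ of $K$ is implemented inside $\sigma$. Using the identities $\sigma(k)\sigma(l)=\omega(k,l)\sigma(kl)$, $\sigma(kl)\sigma(k^{-1})=\omega(kl,k^{-1})\sigma(klk^{-1})$, and $\sigma(k)^{-1}=\omega(k^{-1},k)^{-1}\sigma(k^{-1})$, one produces a Borel function $c:L\to\T$ with
\begin{equation*}
\sigma(k)\sigma(l)\sigma(k)^{-1}=c(l)\,\sigma(klk^{-1})\qquad(l\in L).
\end{equation*}
Substituting this into both sides of $\sigma(kl_1k^{-1})\sigma(kl_2k^{-1})=\omega(kl_1k^{-1},kl_2k^{-1})\sigma(kl_1l_2k^{-1})$ and invoking $\sigma(l_1)\sigma(l_2)=\omega(l_1,l_2)\sigma(l_1l_2)$ forces
\begin{equation*}
\omega(kl_1k^{-1},kl_2k^{-1})=\omega(l_1,l_2)\,\frac{c(l_1l_2)}{c(l_1)c(l_2)};
\end{equation*}
setting $\tilde c(klk^{-1}):=c(l)$, this is the coboundary identity $\omega\restr{kLk^{-1}}=\delta\tilde c^{-1}\cdot\omega^k$, so $[\omega^k]=[\omega\restr{kLk^{-1}}]$.

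For the subrepresentation statement, since $\rho$ and $\sigma\restr L$ share the cocycle $\omega\restr L$, the hypothesis $\rho\le\sigma\restr L$ gives an isometry $W:\H_\rho\to\H_\sigma$ with $\sigma(l)W=W\rho(l)$ for all $l\in L$. I set $\tilde W:=\sigma(k)W$, which is again isometric. Using $\sigma(klk^{-1})=c(l)^{-1}\sigma(k)\sigma(l)\sigma(k)^{-1}$, one computes
\begin{equation*}
\sigma(klk^{-1})\tilde W=c(l)^{-1}\sigma(k)\sigma(l)W=c(l)^{-1}\sigma(k)W\rho(l)=\tilde W\,\bigl(\tilde c^{-1}\cdot(k\cdot\rho)\bigr)(klk^{-1}),
\end{equation*}
so $\tilde W$ is an isometric intertwiner of the (now genuine) $\omega\restr{kLk^{-1}}$-representation $\tilde c^{-1}\cdot(k\cdot\rho)$ with a subrepresentation of $\sigma\restr{kLk^{-1}}$ on $\tilde W(\H_\rho)$. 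Per the definition of $\le$ with $f=\tilde c^{-1}$ the Borel function satisfying $\omega\restr{kLk^{-1}}=\delta f\cdot\omega^k$, this is exactly $k\cdot\rho\le\sigma\restr{kLk^{-1}}$.

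The only potentially delicate step is the cocycle bookkeeping that produces $c$ and the coboundary identity above; however, this is just a concrete manifestation of the standard fact that the inner automorphism of $K$ induced by $k$ acts trivially on $H^2(K,\T)$, with the coboundary witnessed explicitly by $\sigma(k)$ itself.
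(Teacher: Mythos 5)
Your proposal is correct and follows essentially the same route as the paper: the paper's one-line proof invokes the fact that $\sigma\circ C_{k}$ is equivalent to $\sigma$ as a projective representation, and your explicit Borel function $c$ and the intertwiner $\tilde W=\sigma(k)W$ are exactly the witnesses of that equivalence, spelled out at the cocycle level. The extra care you take in distinguishing $\omega^{k}$ from $\omega\restr{kLk^{-1}}$ and exhibiting the coboundary $\delta\tilde c^{-1}$ is a welcome elaboration of what the paper leaves implicit.
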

\begin{proof}
  The result follows by applying the inner automorphism $C_{k}:K\to K$
  given by $C_{k}(l)=klk^{-1}$ to $\sigma$ and observing that $k\cdot
  \sigma=\sigma\circ C_{k}$ is equivalent to $\sigma$.
\end{proof}

An action of a group on a space $X$ is said to have \emph{locally
  finitely many orbit types} if for each $x\in X$ there is an open
$G$-invariant neighborhood $U_{x}$ of $x$ and a finite set of
conjugacy classes of subgroups of $G$ such that each stabilizer for
the action on $G$ on $U_{x}$ lies in one of these conjugacy classes.
A highly nontrivial, but classical result (see, for example,
\cite{bor:seminar60}*{Chap. VI~and VII}) implies that differentiable
actions of compact Lie groups on Manifolds have locally finitely many
orbit types.

\begin{lemma}
  \label{lem-fin-orb-types}
  Let $K$ be a second countable compact group acting on a separable
  stable continuous-trace \cs-algebra $B$ with spectrum $X$.  Suppose
  that $K$ fixes $x\in X$ and that the action has finitely many orbit
  types.  Let $\sigma\in \widehat K_{\omega_{x}}=\widehat
  K_{x,\omega_{x}}$.  Then $(x_{n},\sigma_{n})\to (x,\sigma)$ in
  \stabxbh\ if and only if
  \begin{enumerate}
  \item $x_{n}\to x$, and
  \item there is {an} $N\in\N$ such that $\sigma_{x_{n}}\le
    \sigma\restr{K_{x_{n}}}$ for all $n\ge N$.
  \end{enumerate}
\end{lemma}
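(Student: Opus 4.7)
The strategy is to mimic the proof of Theorem \ref{thm-main1}, using the hypothesis of finitely many orbit types in place of Palais's slice property, plus a conjugation maneuver to handle the fact that in the compact setting stabilizers vary within a conjugacy class rather than being eventually equal to a fixed subgroup. Stability of $B$ is already hypothesized, so Lemma \ref{lem-restr-to-ninf} is directly applicable.

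Forward direction. Condition (a) is immediate from the $C_{0}(X)$-algebra structure on $(B\tensor\kltg)_{\fix}$. For (b), I argue by contradiction: suppose a subsequence satisfies $\sigma_{n}\not\le \sigma\restr{K_{x_{n}}}$. By finitely many orbit types, pass to a further subsequence where $K_{x_{n}}=k_{n}Lk_{n}^{-1}$ for a fixed closed subgroup $L\subseteq K$ and elements $k_{n}\in K$, and by compactness of $K$ extract once more so that $k_{n}\to k_{0}\in K$. The $K$-action on $\stabxbh$ is continuous (it is inherited from the continuous $K$-action on $(B\tensor\kltg)_{\fix}$), and $k_{0}^{-1}\cdot x=x$ since $K_{x}=K$. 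Hence the translated sequence
\begin{equation*}
(y_{n},\rho_{n}) := (k_{n}^{-1}\cdot x_{n},\, k_{n}^{-1}\cdot\sigma_{n})
\end{equation*}
has constant stabilizer $L$ and converges to $(x, k_{0}^{-1}\cdot\sigma)$ in $\stabxbh$.

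Now apply the dichotomy from the proof of Theorem \ref{thm-main1}: either a subsequence of $\sset{y_{n}}$ equals $x$ (so $L=K$, and convergence in the discrete set $\widehat{K}_{\omega}$ forces $\rho_{n}=k_{0}^{-1}\cdot\sigma$ eventually), or the $y_{n}$ may be assumed all distinct and distinct from $x$. In the second case, Lemma \ref{lem-restr-to-ninf} applied to $S=\sset{y_{n}}\cup\sset{x}$, together with Lemma \ref{lem-n-infinity} and Example \ref{ex-c-in-d}, gives $\rho_{n}\le (k_{0}^{-1}\cdot\sigma)\restr L$ for all large $n$. Translating back via Lemma \ref{lem-sub-o-rep} yields $\sigma_{n}=k_{n}\cdot\rho_{n}\le (k_{n}k_{0}^{-1}\cdot\sigma)\restr{K_{x_{n}}}$. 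Since $k_{n}k_{0}^{-1}\in K=K_{x}$, conjugation by $\sigma(k_{n}k_{0}^{-1})$ in $PU(\H)$ (the multipliers cancel in the quotient) implements an equivalence between $\sigma$ and $k_{n}k_{0}^{-1}\cdot\sigma$ as projective representations of $K$, so their restrictions to $K_{x_{n}}$ have the same irreducible projective subrepresentations. Consequently $\sigma_{n}\le \sigma\restr{K_{x_{n}}}$, contradicting our assumption; the first case yields the same contradiction, since the equivalence of $\sigma_{n}$ and $\sigma$ forces $\sigma_{n}\le \sigma$.

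Reverse direction. Assuming (a) and (b), I extract a convergent sub-subsequence from any subsequence. The same reductions (finitely many orbit types, compactness of $K$) produce a sub-subsequence with $K_{x_{n}}=k_{n}Lk_{n}^{-1}$ and $k_{n}\to k_{0}$. Hypothesis (b) translates through Lemma \ref{lem-sub-o-rep} and the equivalence of $\sigma$ with $k_{0}k_{n}^{-1}\cdot\sigma$ into $\rho_{n}\le (k_{0}^{-1}\cdot\sigma)\restr L$; the converse direction of Lemma \ref{lem-n-infinity} combined with Lemma \ref{lem-restr-to-ninf} then forces $(y_{n},\rho_{n})\to (x, k_{0}^{-1}\cdot\sigma)$, and continuity of the $K$-action with $k_{n}\to k_{0}$ recovers $(x_{n},\sigma_{n})\to (x,\sigma)$. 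The main technical obstacle is the conjugation bookkeeping --- confirming that $(k_{n}k_{0}^{-1}\cdot\sigma)\restr{K_{x_{n}}}$ has exactly the same irreducible projective subrepresentations as $\sigma\restr{K_{x_{n}}}$ under the subrepresentation convention of Theorem \ref{thm-main1}, and that continuity of the $K$-action on $\stabxbh$ cleanly transfers convergence between $(x_{n},\sigma_{n})$ and its $k_{n}^{-1}$-translate without losing control of the subrepresentation relations.
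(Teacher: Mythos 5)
Your proposal is correct and follows essentially the same route as the paper's own proof: use finitely many orbit types plus compactness of $K$ to conjugate to a subsequence with constant stabilizer $L$, feed that into Lemma~\ref{lem-restr-to-ninf}, Lemma~\ref{lem-n-infinity} and Example~\ref{ex-c-in-d}, and translate the subrepresentation condition back with Lemma~\ref{lem-sub-o-rep}. The only (harmless) difference is bookkeeping: the paper normalizes the conjugating elements so that $h_{n}=s^{-1}s_{n}\to e$ and the translated sequence converges to $(x,\sigma)$ itself, whereas you let $k_{n}\to k_{0}$ and carry the equivalent representation $k_{0}^{-1}\cdot\sigma$ through the argument.
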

\begin{proof}
  Suppose that $(x_{n},\sigma_{n})\to (x,\sigma)$.  Then we certainly
  have $x_{n}\to x$.  If condition~(b) fails, {we} can pass to a
  subsequence such that $\sigma_{x_{n}}\not\le
  \sigma\restr{K_{x_{n}}}$ for all $n$.  Since there are only finitely
  many orbit types, we can take this subsequence so that all the
  stability groups are conjugate.  Thus, we can also assume that there
  are $s_{n}\in K$ such that $K_{s_{n}\cdot x}$ is a constant subgroup
  $\tilde L$.  Since $K$ is compact, we can assume that $s_{n}\to s\in
  K$.  Let $h_{n}=s^{-1}s_{n}$ and let $L=s\tilde Ls^{-1}$.  Since the
  action of conjugation on $\stabxbh$ is continuous, we see that
  $(h_{n}\cdot x_{n},h_{n}\cdot \sigma_{n}):=(y_{n},\rho_{n}) \to
  (x,\sigma)$.  Passing to another subsequence, we assume that either
  $y_{n}=x$ for all $n$ or that $y_{n}\not=y_{m}\not=x$ for all
  $n\not=m$.  Just as in the proof of Theorem~\ref{thm-main1} we can
  appeal to Lemma~\ref{lem-restr-to-ninf} to conclude that we
  eventually have $[\omega_{y_{n}}]=[\omega_{x}\restr L]$ and
  $\rho_{n}\le \sigma \restr L$.  But then Lemma~\ref{lem-sub-o-rep}
  gives a contradiction.

  Conversely, if (a)~and (b) hold, then we can use similar arguments
  as in Theorem~\ref{thm-main1} to reduce to the situation of
  Lemma~\ref{lem-restr-to-ninf} and complete the proof.
\end{proof}

\begin{lemma}
  \label{lem-from-sig}
  Suppose that $\beta:G\to\Aut B$ is a dynamical system such that
  there is a $G$-equivariant map $\phi:\widehat B\to G/H$.  (Recall
  that the $G$-action on $\widehat B$ is given by $s\cdot\pi=\pi\circ
  \beta_{s}^{-1}$.)  Let $Z:=\phi^{-1}(\sset {eH})\subseteq \widehat
  B$.  Then $\widehat B$ is $G$-homeomorphic to
  $G\times_{H}Z:=H\backslash (G\times Z)$ via the map $\psi$ sending
  $[s,\pi]$ to $s\cdot \pi$.  (The action of $H$ on $G\times Z$ is
  given by $h\cdot (s,\pi)=(sh^{-1},h\cdot \pi)$.)
\end{lemma}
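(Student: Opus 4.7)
The plan is to verify that $\psi$ is a well-defined, continuous, $G$-equivariant bijection, and then establish that it is a homeomorphism by constructing a continuous inverse via local sections of the quotient map $q:G\to G/H$.

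For the preliminaries, first observe that $Z$ is $H$-invariant so that $G\times_H Z$ is sensible: if $\pi\in Z$ and $h\in H$, then $\phi(h\cdot\pi)=h\cdot\phi(\pi)=h\cdot eH=eH$, so $h\cdot\pi\in Z$. Well-definedness of $\psi$ is then immediate, since $\psi([sh^{-1},h\cdot\pi])=sh^{-1}\cdot(h\cdot\pi)=s\cdot\pi=\psi([s,\pi])$, and continuity follows because the map $G\times Z\to\widehat{B}$ given by $(s,\pi)\mapsto s\cdot\pi$ is continuous and factors through the quotient. $G$-equivariance with respect to the left-translation action on the first coordinate is a direct check. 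For surjectivity: given $\tau\in\widehat{B}$, write $\phi(\tau)=sH$; then $\phi(s^{-1}\cdot\tau)=s^{-1}\cdot(sH)=eH$, so $s^{-1}\cdot\tau\in Z$ and $\psi([s,s^{-1}\cdot\tau])=\tau$. For injectivity, suppose $s\cdot\pi=t\cdot\rho$ with $\pi,\rho\in Z$; applying $\phi$ gives $sH=tH$, so we may write $t=sh^{-1}$ for some $h\in H$, and then $s\cdot\pi=sh^{-1}\cdot\rho$ forces $\rho=h\cdot\pi$, whence $(t,\rho)=h\cdot(s,\pi)$ and $[s,\pi]=[t,\rho]$.

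The main obstacle is showing that the inverse of $\psi$ is continuous. The approach is to use local continuous sections of $q:G\to G/H$, which are available in the settings that concern us in this paper: trivially when $G$ is discrete, and because $q$ is a principal $H$-bundle when $G$ is a Lie group and $H$ is a closed subgroup. Given a continuous section $\eta:U\to G$ of $q$ on an open $U\subseteq G/H$, I would define a map
\begin{equation*}
\phi^{-1}(U)\to G\times_H Z,\qquad \pi\mapsto\bigl[\eta(\phi(\pi)),\,\eta(\phi(\pi))^{-1}\cdot\pi\bigr].
\end{equation*}
The second coordinate lies in $Z$ because its $\phi$-image is $\eta(\phi(\pi))^{-1}\cdot\phi(\pi)=eH$, and the assignment is continuous as a composition of continuous maps. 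To conclude I would verify the routine gluing step: if $\eta_1,\eta_2$ are two continuous local sections defined near the same point, they differ by a continuous $H$-valued function $h$ (so $\eta_2=\eta_1\cdot h$ pointwise), and the corresponding local patches for $\psi^{-1}$ differ by exactly the $H$-action defining the quotient $G\times_H Z$, so they agree as maps into $G\times_H Z$. Thus the patches glue to a globally defined continuous inverse of $\psi$, completing the proof that $\psi$ is a $G$-equivariant homeomorphism.
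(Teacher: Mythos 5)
The paper offers no proof of this lemma at all---it simply cites \cite{ech:pams90,ech:pams92} and \cite{wil:crossed}*{Proposition~3.53}---so a direct argument is welcome, and most of yours is fine: the $H$-invariance of $Z$, the well-definedness of $\psi$, its $G$-equivariance, and the bijectivity computations are all correct, and the continuity of $\psi$ does follow from the (standard, but citation-worthy) joint continuity of $(s,\pi)\mapsto s\cdot\pi$ on $\widehat B$. The problem is the final step. The lemma is stated for an arbitrary second countable locally compact $G$ and closed subgroup $H$, and for such pairs the quotient map $q:G\to G/H$ need not admit local \emph{continuous} sections; in general one only has Borel sections. Your construction of $\psi^{-1}$ from local sections therefore proves the lemma only under an extra hypothesis. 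It does happen to cover the places the lemma is used in this paper ($G$ discrete; $G$ a Lie group with $H=G_{x}$), but it is not a proof of the statement as written, and you should either add the section-existence hypothesis explicitly or argue differently.

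The standard way to get continuity of $\psi^{-1}$ with no sections at all is a subnet argument using only the openness of $q$. Suppose $\tau_{i}\to\tau=s\cdot\pi$ in $\widehat B$ with $\pi\in Z$. Then $\phi(\tau_{i})\to sH$ in $G/H$, and since $q$ is continuous and open, every subnet of $\sset{\phi(\tau_{i})}$ has a subnet which lifts to a net $s_{i}\to s$ in $G$ with $q(s_{i})=\phi(\tau_{i})$ (this is exactly the device of \cite{wil:crossed}*{Proposition~1.15}, already used in the proof of Proposition~\ref{prop-gen-homeo}). Along such a subnet, $\pi_{i}:=s_{i}^{-1}\cdot\tau_{i}$ lies in $Z$ because $\phi(\pi_{i})=s_{i}^{-1}\cdot q(s_{i})=eH$, and $\pi_{i}\to s^{-1}\cdot\tau=\pi$ by joint continuity; hence $\psi^{-1}(\tau_{i})=[s_{i},\pi_{i}]\to[s,\pi]=\psi^{-1}(\tau)$. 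Since every subnet of $\sset{\psi^{-1}(\tau_{i})}$ admits a further subnet converging to $\psi^{-1}(\tau)$, the whole net converges, and $\psi^{-1}$ is continuous in full generality. (Incidentally, your gluing check for two overlapping sections is automatic: both local maps are right inverses of the bijection $\psi$, hence are restrictions of $\psi^{-1}$ and must agree.)
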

\begin{proof}
  {This follows easily from \cite{ech:pams90,ech:pams92} or
    \cite{wil:crossed}*{Proposition~3.53}. (See
    \cite{ech:xx11v2}*{Theorem~6.2 and Corollary~6.3} for more
    details.)}
\end{proof}

\begin{lemma}
  \label{lem-slice-top}
  Suppose that $K$ is a compact group acting on a topological space
  $Y$, and suppose that $K$ fixes $y\in Y$.  Let $\sset{y_{i}}$ be a
  net in $Y$ such that $K\cdot y_{i}\to K\cdot y$ in $K\backslash Y$.
  Then the net $\sset{y_{i}}$ converges to $y$ in $Y$.
\end{lemma}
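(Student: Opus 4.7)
The plan is to prove this by reducing convergence in $Y$ to convergence in $K\backslash Y$ via a tube-lemma argument. The key point is that although $K\cdot y_i$ converges to the orbit $K\cdot y=\{y\}$ in the quotient, we only know a priori that some translate of $y_i$ lies near $y$; the compactness of $K$ together with the fact that $y$ is fixed is what allows us to conclude that $y_i$ itself lies near $y$.

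More precisely, let $U$ be any open neighborhood of $y$ in $Y$. Since the action map $K\times Y\to Y$ is continuous and sends $K\times\sset y$ into $U$ (because $K$ fixes $y$), the preimage of $U$ is an open subset of $K\times Y$ containing the compact slice $K\times\sset y$. The tube lemma then provides an open neighborhood $V$ of $y$ in $Y$ such that $K\cdot V\subseteq U$.

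Next I would observe that the orbit map $q\colon Y\to K\backslash Y$ is open, since $q^{-1}(q(V))=K\cdot V$ is a union of the open sets $k\cdot V$ for $k\in K$ and is therefore open. Hence $q(V)$ is an open neighborhood of $K\cdot y$ in $K\backslash Y$. By hypothesis the net $K\cdot y_i$ converges to $K\cdot y$, so eventually $K\cdot y_i\in q(V)$, meaning $y_i\in K\cdot V\subseteq U$. Since $U$ was arbitrary, this gives $y_i\to y$ in $Y$.

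The only step that is not essentially formal is the tube-lemma application, and even that is routine given the compactness of $K$; the rest is quotient-topology bookkeeping. So I do not expect any serious obstacle in this proof — the content of the lemma is exactly the combination of the tube lemma with openness of the orbit map.
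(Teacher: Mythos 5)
Your proof is correct, but it takes a genuinely different route from the one in the paper. The paper argues by contradiction: assuming $y_i$ stays outside a neighborhood of $y$ along a subnet, it uses openness of the orbit map to lift the convergence $K\cdot y_i\to K\cdot y$ to a net $k_i\cdot y_i\to y$, then uses compactness of $K$ to pass to a subnet with $k_i\to k$, and finally joint continuity of the action together with $k^{-1}\cdot y=y$ to conclude $y_i\to y$ after all. You instead give a direct argument: compactness of $K$ enters through the tube lemma, producing for each neighborhood $U$ of $y$ an open $V\ni y$ with $K\cdot V\subseteq U$ (this is where the hypothesis that $K$ fixes $y$ is used, so that $K\times\sset y$ sits inside the preimage of $U$), and openness of the orbit map then converts eventual membership of $K\cdot y_i$ in $q(V)$ into eventual membership of $y_i$ in $K\cdot V\subseteq U$. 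Your version avoids both the contradiction and the subnet extraction (in particular it does not need the lemma on lifting convergent nets through open surjections), at the cost of invoking the tube lemma; the paper's version is the pattern it reuses elsewhere (e.g.\ in Proposition~\ref{prop-gen-homeo}), which is presumably why it is phrased that way. Both arguments use exactly the same three ingredients --- compactness of $K$, joint continuity of the action, and the fixed-point hypothesis --- so neither is more general than the other.
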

\begin{proof}
  If the assertion is false, then after passing to a subnet and
  relabeling, {we} can assume that there is a neighborhood $U$ of $y$
  such that $y_{i}\notin U$ for all $i$.  But since the orbit map is
  open, we may as well assume that there are $k_{i}\in K$ such that
  $k_{i}\cdot y_{i}\to y$.  Sine $K$ is compact, we can even assume
  that $k_{i}\to k$.  But then $y_{i}=k_{i}^{-1}k_{i}\cdot y_{i}\to
  k^{-1}\cdot y=y$.  Hence $y_{i}$ is eventually in $U$ which is a
  contradiction.
\end{proof}

\begin{remark}[Palais's Slice Property]
  \label{rem-PS}
  As in \cite{echeme:em11}*{Definition~1.7}, we say that a group $G$
  acting properly on a locally compact space $X$ satisfied property
  (SP) (for Palais's slice property) if $X$ is \emph{locally induced
    from stabilizers} in that each point $x\in X$ has a neigborhood
  $U_{x}$ such that there is a closed $G_{x}$-invariant set
  $S_{x}\subseteq U_{x}$ such that $x\in S_{x}$ and such that the
  induced space $G\times_{G_{x}}S_{x}$ is $G$-homeomorphic to $U_{x}$
  via the map $[s,y]\mapsto s\cdot y$.  Note that
  Lemma~\ref{lem-slice-top} implies that finding $S_{x}$ is equivalent
  to finding a continuous $G$-map $\phi_{x}:U_{x}\to G/G_{x}$ with
  $S_{x}=\phi_{x}^{-1}(\sset{eG_{x}})$.  (In the case
  $U_{x}=G\times_{G_{x}}S_{x}$, we can define such a map by sending
  $[s,y]\mapsto sG_{x}$.)  We call $S_{x}$ a \emph{local slice at
    $x$}.

  Palais's Slice Theorem (\cite{pal:aom61}*{Theorem~2.3.3}) implies
  that every proper action of a Lie group on a locally compact space
  has property (SP).  Moreover, if $G$ acts differentiably on a
  manifold $X$, then $S_{x}$ can be taken to be a submanifold of
  $U_{x}$ such that the action of $G_{x}$ on $S_{x}$ is
  differentiable.  This follows from the construction of the slice in
  \cite{pal:aom61}*{\S2.2} (combine the first lemma of
  \cite{pal:aom61}*{\S2.2} with \cite{pal:aom61}*{Proposition~2.1.7}).
  Hence the action of $G_{x}$ on $S_{x}$, and therefore the action of
  $G$ on $X$, has locally finitely many orbit types.
\end{remark}

\begin{proof}[Proof of Theorem~\ref{thm-main2}]
  As in the proof of Theorem~\ref{thm-main1}, we can assume that
    $B$ is stable.  We let $\sset{(x_{n},\sigma_{n})}$ be a sequence
  in $\stabxbh$ and $(x,\sigma)\in\stabxbh$ such that $x_{n}\to x$.
  In view of Remark~\ref{rem-PS} above, we can also assume that
  $X\cong G\times_{G_{x}}Y$ for a local slice $Y$ at $x$ and that the
  corresponding action of $G_{x}$ on $Y$ has finitely many orbit
  types. Let $\phi=\phi_{x}:X\to G/G_{x}$ be the corresponding
  $G$-equivariant map.  Then we get a $G$-equivariant map
  $\psi:\stabxbh\to G/G_{x}$ by $\psi(z,\rho)=\phi(z)$.  Let
  $Z:=\psi^{-1}(\sset{eG_{x}})$.  Since $\stabxbh$ is
  $G$-equivariantly isomorphic to $(B\tensor\kltg)_{\fix}$, we obtain
  a $G$-homeomorphism
  \begin{equation*}
    \Phi:G\times_{G_{x}} Z\to \stabxbh
  \end{equation*}
  given by $\Phi\bigl([s,(y,\rho)]\bigr) = (s\cdot y,s\cdot \rho)$.
  Choose $s_{n}\in G$ and $(y_{n},\rho_{n})\in Z$ such that
  $(s_{n}^{-1}\cdot y_{n},s_{n}^{-1}\cdot \rho)=(x_{n},\sigma_{n})$
  for all $n$.  Since $Y$ is a slice at $x$, the map $G_{x}\cdot
  y\mapsto G\cdot y$ is a homeomorphism of $G_{x}\backslash Y$ onto
  $G\backslash X$.  Since $G\cdot x_{n}\to G\cdot x$ and $G\cdot
  y_{n}=G\cdot x_{n}$, we must have $G_{x}\cdot y_{n}\to G_{x}\cdot
  x$.  Thus $y_{n}\to x$ by Lemma~\ref{lem-slice-top}.

  Assume now that $(x_{n},\sigma_{n})\to (x,\sigma)$ in \stabxbh.
  Then we certainly have $x_{n}\to x$, so we can assume the set-up in
  the previous paragraph.  Then we claim that $(y_{n},\rho_{n})\to
  (x,\sigma)$ in $\stabxbh$.  Replacing $\sset{(y_{n},\rho_{n})}$ by a
  subsequence, it suffices to see that a subsequence converges to
  $(x,\sigma)$.  Since $y_{n}\to x$ and $x_{n}=s_{n}^{-1}\cdot
  y_{n}\to x$, the properness of the action allows us to pass to a
  subsequence and relabel and assume that $s_{n}\to s\in G_{x}$.  Then
  it follows from the continuity of the $G$-action on \stabxbh\ that
  $(y_{n},\rho_{n})= s_{n}^{-1}\cdot (x_{n},\sigma_{n})\to s^{-1}\cdot
  (x,\sigma)=(x,\sigma)$.  This proves the claim.

  We have $Z=\bigl(\bigl(B\tensor\kltg\bigr)_{\fix,Y}\bigr)^{\wedge}$.
  On the other hand, $G_{x}$ acts on $B\restr Y$ and
  Lemma~\ref{lem-stab-iso} implies that the corresponding space
  $\stabybh \cong\bigl( \bigl(B\tensor
  \K(L^{2}(G_{x})\bigr)_{\fix}\bigr)^{\wedge}$ is homeomorphic to $Z$.
  Thus it follows from Lemma~\ref{lem-fin-orb-types} that for large
  $n$ we have $[\omega_{s_{n}\cdot
    x_{n}}]=[\omega_{y_{n}}]=[\omega_{x}\restr{G_{x}}]
  =[\omega_{x}\restr{G_{s_{n}\cdot x_{n}}}]$ and $s_{n}\cdot
  \sigma_{n}=\rho_{n}\le \sigma\restr{G_{s_{n}\cdot x_{n}}}$.  Thus
  condition~(b) holds and we've established the forward implication of
  the theorem.

  The converse follows by applying Lemma~\ref{lem-n-infinity} and
  Lemma~\ref{lem-restr-to-ninf} as in the proof of
  Theorem~\ref{thm-main1} to the subsequence $\sset{(s_{l}\cdot
    y_{l},s_{l}\cdot \rho_{l})}$.  Since $G_{s_{l}\cdot
    y_{l}}=L\subseteq G_{x}$ and $s_{l}\cdot \rho_{l}\le \sigma\restr
  L$, it follows from those lemmas that $(s_{l}\cdot y_{l},s_{l}\cdot
  \rho_{l})\to (x,\sigma)$.  Thus condition~(b) tells us that every
  subsequence of $\sset{(x_{n},\sigma_{n})}$ has a subsequence
  converging to $(x,\sigma)$.  This implies $(x_{n},\sigma_{n})\to
  (x,\sigma)$ and completes the proof.
\end{proof}

\section{Application to group extensions}
\label{sec-app}

In this section we want to study of the unitary dual $\widehat{G}$ of
a locally compact group $G$ which fits into a short exact sequence
\begin{equation*}
  \xymatrix{1\ar[r]&N\ar[r]^{\iota}&G\ar[r]^{q}&K\ar[r]&1}
\end{equation*}
% $$1\to N\stackrel{\iota}{\to} G\stackrel{q}{\to} K\to 1$$
of locally compact groups in which $N$ is abelian and $K$ is compact.
Following Green (\cite{gre:am78} --- but see \cite[Chapter
1]{ech:mams96} for a survey) we may write the C*-group algebra
$C^*(G)$ as a twisted crossed product
$C^*(N)\rtimes_{\alpha,\tau}(G,N)\cong
C_0(\widehat{N})\rtimes_{\widehat\alpha,\widehat\tau}(G,N)$ in which
the action $\alpha$ is given by the conjugation action on the dense
subalgebra $C_c(N)\subseteq C^*(N)$ and the twist $\tau:N\to
UM(C^*(N))$ is given by the canonical inclusion map.  Then
$(\widehat{\alpha},\widehat{\tau})$ is the twisted action on
$C_0(\widehat{N})$ corresponding to $(\alpha, \tau)$ via the Fourier
isomorphism $C^*(N)\cong C_0(\widehat{N})$. Using a version of the
Packer-Raeburn stabilization trick (e.g., see \cite[Chapter
2]{ech:mams96}) we see that the twisted system is Morita equivalent to
an action $\beta$ of the compact group $K=G/N$ on
$B:=C_0(\widehat{N},\K)$ which covers the conjugation action of $K$ on
$\widehat{N}$.  Thus we are precisely in the situation of our general
results.

Note that Morita equivalence of twisted actions induces a Morita
equivalence of the twisted crossed products, hence a homeomorphisms
between the dual spaces of these crossed products. Moreover, it has
been worked out in \cite[Chapter 2]{ech:mams96} that passing to Morita
equivalent twisted actions is compatible with the Mackey-Rieffel-Green
machine of inducing representations from the stabilizers including the
computation of the Mackey obstructions (see \cite[Proposition 2.1.4 \&
2.1.5]{ech:mams96}).  Thus we see that $\widehat{G}$ is homeomorphic
to $G\backslash \stab(\widehat{N}_\beta)^\wedge$ with
$\stab(\widehat{N}_\beta)^\wedge$ topologized as in Definition
\ref{def-stab}.

In order to get a description of the space
$\stab(\widehat{N}_\beta)^\wedge$ we need to compute the
Mackey-obstructions of the twisted system $(C_0(\widehat{N}),
G,N,\widehat\alpha,\widehat\tau)$. Let $c:K\to G$ be a fixed choice of
a Borel section for the quotient map $q:G\to K$ such that $c(eN)=e$,
where $e$ denotes the unit of $G$. Then, for each $\chi\in
\widehat{N}$ we get a Borel extension $\tilde\chi:G\to\T$ of the
character $\chi$ by putting
$$\tilde\chi(s):=\chi\big(c(q(s))^{-1}s\big).$$
If we restrict this map to the stabilizer
\begin{equation*}
  G_\chi=\set{s\in
    G:\text{$\chi(sns^{-1})=\chi(n)$ for all $n\in N$}}
\end{equation*}
of the character $\chi$ in $G$, then we can check that
$(\epsilon_{\chi}, \tilde\chi)$, where
$\epsilon_{\chi}:C_0(\widehat{N})\to\C$ denotes evaluation at $\chi$,
is an $\om_{\chi}$-covariant representation of the twisted system
$(C_0(\widehat{N}), G_\chi, N,\widehat\alpha,\widehat\tau)$, with
multiplier $\om_\chi$ given by
\begin{equation}\label{eq-omchi}
  \om_\chi(s,t)=\tilde\chi(s)\tilde\chi(t)\tilde\chi(st)^{-1}.
\end{equation}
We then compute
\begin{align*}
  \om_\chi(s,t)&=\chi\big(c(q(s))^{-1}s\big)
  \chi\big(c(q(t))^{-1}t\big)\chi\big(t^{-1}s^{-1} c(q(st))\big)\\
  &=\chi\big(c(q(s))^{-1}s\big)\chi\big(c(q(t))^{-1}
  s^{-1}c(q(st))\big)\\
  \intertext{which, since $\chi$ is invariant under conjugation with
    elements in $G_{\chi}$, is} &=\chi\big(c(q(s))^{-1}s\big)
  \chi\big(s^{-1}c(q(st))c(q(t))^{-1} \big)\\
  &=\chi\big(c(q(s))^{-1}c(q(st))c(q(t))^{-1} \big).
\end{align*}
Thus we see that $\om_{\chi}$ factors through a cocycle on the
stabilizer $K_\chi:=G_\chi/N$ of $\chi$ in $K$ and we obtain
\begin{equation*}
  \stab(\widehat{N}_\beta)^\wedge=\set{(\chi,\sigma): \text{$\chi\in
      \widehat{N}$ and $ \sigma\in \widehat{K}_{\chi,[\om_{\chi}]}$}}
\end{equation*}
with $\om_\chi$ as in the above computations. Now if $K$ is a compact
Lie group, we may apply Theorem~\ref{thm-main2} (or
Theorem~\ref{thm-main1} if $K$ is finite) to obtain a description of
the topology on $\stab(\widehat{N}_\beta)^\wedge$ in terms of
convergent sequences.

\begin{example} [The group $G=\mathbf{p4g}$] We want to illustrate the
  above procedure in the particular example of the crystallographic
  group $G=\mathbf{p4g}$, which is the subgroup of the full motion
  group $\R^2\rtimes\Otwo$ generated by $\{(n,E): n\in \Z^2\}$
  together with the elements
$$\{(0,R), (0,R^2), (0,R^3), (v, S), (v, SR), (v, SR^2), (v, SR^3)\}$$
where $R=\left(\begin{smallmatrix} 0&-1\\1&0\end{smallmatrix}\right)$,
$S=\left(\begin{smallmatrix} 1&0\\0&-1\end{smallmatrix}\right)$ and
$v=\left(\begin{smallmatrix}1/2\\ 1/2\end{smallmatrix}\right)\in
\R^2$. Then $G$ fits into an extension
\begin{equation*}
  \xymatrix{0\ar[r]&\Z^{2}\ar[r]&G\ar[r]&D_{4}\ar[r]&0}
\end{equation*}
% $$0\to \Z^2\to G\to D_4\to 0$$
where $D_4=\{E,R,R^2,R^3,S, SR, SR^2, SR^3\}$ is the dihedral
group. The quotient map is given by projection on the second factor
and we have an obvious section $c:D_4\to G$ given by $c(X)=(0,X)$ if
$X\in \lk R\rk$ and $c(X)=(v,X)$ if $X\in S\lk R\rk$. The conjugation
action of $D_4=G/\Z^2$ on $\T^2=\widehat{\Z^2}$ is given by matrix
multiplication; that is, if we write
$$\exp_2:\R^2\to\T^2;\; \exp_2\left(\begin{smallmatrix}s
    \\t\end{smallmatrix}\right)=\bigl(\begin{smallmatrix} e^{2\pi i
    s}\\
  e^{2\pi it}\end{smallmatrix}\bigr),$$ then we have
$$X\cdot \exp_2\left(\begin{smallmatrix}s\\t\end{smallmatrix}\right)
=\exp_2\big(X\left(\begin{smallmatrix}s\\t\end{smallmatrix}\right)\big)$$ 
for $X\in D_4$, $s,t\in \R$. This action has been studied in
\cite{echeme:em11} (see Examples 2.6, 3.5 and 4.10 of that paper). In
particular, it is shown in \cite{echeme:em11} that the image under the
exponential map of the triangle
$$Z:=\set{\left(\begin{smallmatrix}s\\t\end{smallmatrix}\right)\in \R^2:
\text{$0\leq t\leq \textstyle{\frac{1}{2}}$ and $ 0\leq s\leq t$}}$$ 
is a topological fundamental domain for the action of $D_4$ on $\T^2$
in the sense that the mapping $Z\to D_4\backslash \T^2; z\mapsto D_4z$
is a homeomorphism. It follows from this that
$$\stab(Z)^\wedge=\set{(\chi_z,\sigma): z\in Z, \sigma\in
  \widehat{D}_{z,[\om_z]}}$$ is a topological fundamental domain for
$\stab(\T^2_\beta)^\wedge$ for the action $\beta$ of $D_4$ on
$C(\T^2,\K)$ as described in the discussion above, where we write
$\chi_z:\Z^2\to \T$ for the character $\chi_z(n)=e^{2\pi i \lk
  z,n\rk}$ for $z\in \R^2$, $D_z$ for the stabilizer of $\chi_z$ in
$D_4$ and $\om_z$ for the Mackey-obstruction at $\chi_z$ as in
\eqref{eq-omchi}. It therefore follows from
Theorem~\ref{thm-main-spec} and the above considerations that
$\widehat{G}$ is homeomorphic to $\stab(Z)^\wedge$ (which we regard as
a closed subset of $\stab(\T^2_\beta)^\wedge$).

We want to study the space $\stab(Z)^\wedge$ more closely. For this we
first note that the action of $D_4$ on $Z$ (or rather its image in
$\T^2$) has the following stabilizers:
\begin{itemize}
\item $D_{\left(\begin{smallmatrix}s\\t\end{smallmatrix}\right)}=\{E\}
  $ if $0< s< t< \frac{1}{2}$;
\item $D_{\left(\begin{smallmatrix}s\\s\end{smallmatrix}\right)}=\lk
  SR^3\rk=:K_1$ if $0<s<\frac{1}{2}$;
\item $D_{\left(\begin{smallmatrix}0\\t\end{smallmatrix}\right)}=\lk
  SR^2\rk=:K_2$ if $0<t<\frac{1}{2}$;
\item $D_{\left(\begin{smallmatrix}s\\1/2\end{smallmatrix}\right)}=\lk
  S\rk=:K_3$ if $0< s <\frac{1}{2}$;
\item $D_{\left(\begin{smallmatrix}0\\1/2\end{smallmatrix}\right)}=\lk
  S, R^2\rk=:H$, and
\item
  $D_{\left(\begin{smallmatrix}0\\0\end{smallmatrix}\right)}=
  D_{\left(\begin{smallmatrix}1/2\\1/2\end{smallmatrix}\right)} 
  =D_4$.
\end{itemize}
We need to compute the cocycles $\om_z:=\om_{\chi_z}$ on the
stabilizers $D_z$ and the $\om_z$-representations of $D_z$ for all
$z\in Z$. Clearly, for $z$ in the interior $Z^\circ$ of $Z$ we have
trivial stabilizers and the trivial representations on these
stabilizers. As a consequence, the portion of $\stab(Z)^\wedge$
corresponding to $Z^\circ$ is homeomorphic to $Z^\circ$.  \medskip

To study the boundary points, we first observe that the cocycle
$\partial c\in Z^2(D_4, \Z^2)$ for the cross-section $c:D_4\to G$ can
be computed as
$$\partial c(X,Y)=c(X)^{-1}c(XY)c(Y)^{-1}
=\begin{cases} 0 & \text{if $Y\in \lk R\rk$}\\
    X^{-1}v-v & \text{if $Y\in S\lk R\rk$,  $X\in \lk R\rk$}\\
    -(X^{-1}v+v) &\text{if $Y,X \in S\lk R\rk$.}\end{cases}$$
If we plug this into the characters $\chi_z$ we get the following
cocycles on the stabilizers:

\medskip
\noindent {\textsc{Case 1.}} On the interiors of the edges of $Z$ with
stabilizers $K_1, K_2, K_3$ we get the values
\begin{itemize}
\item
  $\om_{\left(\begin{smallmatrix}s\\s\end{smallmatrix}\right)}(SR^3,
  SR^3)=e^{-4\pi i s}$ on $K_1=\lk SR^3\rk$, $0<s<\frac{1}{2}$;
\item
  $\om_{\left(\begin{smallmatrix}0\\t\end{smallmatrix}\right)}(SR^2,
  SR^2)=e^{-2\pi i t}$ on $K_2=\lk SR^2\rk$, $0<t<\frac{1}{2}$;
\item
  $\om_{\left(\begin{smallmatrix}s\\ \frac12\end{smallmatrix}\right)}(S,
  S)=e^{2\pi i s}$ on $K_3=\lk S\rk$, $0<s<\frac{1}{2}$;
\end{itemize}
(and all other values $1$).  Note that these cocycles, regarded as
$\T$-valued cocycles, are cohomologous to the trivial one.  In fact if
$u$ is any element of $\T$, if $\epsilon$ is the generator of
$\Z/2\Z$, and if $\om^u$ is the the $\T$-valued cocycle given by
$\om^u(\epsilon,\epsilon)=u$ (and all other values $1$), then $\om^u
=\partial \mu_w$ if $w\in \T$ is a square root of $u$ and
$\mu_w:\Z/2\Z\to \T$ is given by $\mu_w(1)=1$ and $\mu_w(\epsilon)=w$.
It follows in particular that if $w_1, w_2$ are the two roots of $u$,
then $\mu_{w_1}, \mu_{w_2}$ are the two irreducible
$\om^u$-representations of $\Z/2\Z$. This general description can be
applied to the cocycles on the groups $K_1,K_2, K_3$ considered above.

It follows then from Theorem~\ref{thm-main1} that on each open line
segment of $\partial Z$ the corresponding portion of $\stab(Z)^\wedge$
is homeomorphic to $(0,1)\times\{1,-1\}$. For example, on the segment
$S_1:=\{ \left(\begin{smallmatrix} s\\ s\end{smallmatrix}\right): 0<
s< \frac{1}{2}\}$ the homeomorphism $(0,1)\times\{1,-1\}\to
\stab(S_1)^\wedge$ is given by $(t, \pm1)\mapsto \big((\frac{t}{2},
\frac{t}{2}), \mu_{t,\pm 1}\big)$ with $\mu_{t, \pm1}(SR^3)=\pm e^{
  \pi i t}$ and similarly on $S_2:=\{\left(\begin{smallmatrix} 0\\
    t\end{smallmatrix}\right): 0<t<\frac{1}{2}\}$ and
$S_3:=\{\left(\begin{smallmatrix} s\\
    \frac{1}{2}\end{smallmatrix}\right): 0< s<\frac{1}{2}\}$.

\medskip
\noindent \textsc{Case 2.} For
$z_0=\left(\begin{smallmatrix}0\\0\end{smallmatrix}\right)$ we get the
trivial cocycle $\om_{z_0}\equiv 1$ on $D_4$ and the ordinary unitary
dual of $D_4$ which consists of the representations $\{\mu_0, \mu_1,
\mu_2,\mu_3, \lambda\}$ in which $\{\mu_0, \mu_1,\mu_2,\mu_3\}$ denote
the characters of the commutative quotient $D_4/\lk R^2\rk\cong \lk
\tilde{R}\rk\times\lk \tilde{S}\rk$ (where $\tilde{R}$ and $\tilde{S}$
denote the images of $R$ and $S$ in $D_4/\lk R^2\rk$) and $\lambda:
D_4\to M_2(\C)$ denotes the representation given by the inclusion
$D_4\subseteq U(2)$.

\medskip
\noindent \textsc{Case 3.}  For
$z_1:=\left(\begin{smallmatrix}0\\1/2\end{smallmatrix}\right)$ and
$D_{z_1}=H=\lk R^2,S\rk$ we get the values:
 $$\om_{z_1}(R^2, Y)
 =\om_{z_1}(SR^2, Y)=-1$$ for $Y\in \{S, SR^2\}$ and $1$ for all other
 values.  Since $\om_{z_1}$ takes it values in $C_2:=\{1,-1\}$, we may
 use the central extension $H\times_{\om_{z_1}}\!\!C_2$ which, as a
 set, is the direct product $H\times C_2$ with multiplication given by
$$(X, u)(Y, v)=(XY, \om_{z_1}(X,Y) uv)$$
for the study of the $\om_{z_1}$-representations of $H$. In fact,
there is a one-to-one correspondence between the irreducible
$\om_{z_1}$-representations $\sigma$ of $H$ and the irreducible
unitary representations $\tilde\sigma$ of $H\times_{\om_{z_1}}\!\!C_2$
which satisfy $\tilde\sigma(E, -1)=-1$ given as follows: if $\sigma\in
\widehat{H}_{[\om_{z_1}]}$ is given, the corresponding representation
$\tilde\sigma$ is given by $\tilde\sigma(X, u)= u\sigma(X)$ and if
$\tilde\sigma$ is given, then $\sigma(X)=\tilde\sigma(X, 1)$ is the
corresponding projective representation of $H$.

Now a short computation shows that $D_4$ is isomorphic to
$H\times_{\om_{z_1}}\!\!C_2$ by sending $R$ to $(SR^2,1)$ and $S$ to
$(R^2,1)$. If we then identify $\widehat{D}_4$ with
$(H\times_{\om_{z_1}}\!\!C_2)^\wedge$ via this isomorphism, we see
that only the two-dimensional representation $\lambda$ of $D_4$
corresponds to a representation of $H\times_{\om_{z_1}}\!\!C_2$ which
takes value $-1$ on $(E,-1)$. So we only get one (class) $[\sigma]$ of
irreducible $\om_{z_1}$-representations of $H$ given by $\sigma: H\to
U(2)$,
$$\sigma(E)=1,\quad\sigma(R^2)=\left(\begin{smallmatrix} 1&0\\0&-1\end{smallmatrix}\right),\quad
\sigma(S)=\left(\begin{smallmatrix}
    0&1\\1&0\end{smallmatrix}\right)\quad\text{and} \quad
\sigma(SR^2)=\left(\begin{smallmatrix}
    0&-1\\1&0\end{smallmatrix}\right).$$

\medskip
\noindent \textsc{Case 4.}  Finally, for
$z_2=\left(\begin{smallmatrix}1/2\\1/2\end{smallmatrix}\right)$ we get
the cocycle $\om_{z_2}$ on $D_4$ with values
$$\om_{z_2}(R, Y)=\om_{z_2}(R^3,Y)=\om_{z_2}(S,Y)=\om_{z_2}(SR^2,Y)=-1$$
for all $Y\in S\lk R\rk$ and $1$ otherwise. To study the
$\om_{z_2}$-representations of $D_4$ we proceed as above by studying
the central extension $L:=D_4\times_{\om_{z_2}}\!\!C_2$ and determine
those representations $\tau\in \widehat{L}$ which satisfy
$\tau(E,-1)=-1$.  Let's denote this set by $\widehat{L}^-$.  One
checks that $L$ is generated by the elements $(R,1)$ and $(S,1)$ and
with a little work we see that $\widehat{L}^-$ contains
\begin{itemize}
\item four one-dimensional representations $\zeta_0,\ldots, \zeta_3$
  given on these generators by
  \begin{align*}
    \zeta_0(R,1)&= \zeta_0(S,1)=i,\\
    \zeta_1(R,1)&=-\zeta_1(S,1)=i,\\
    \zeta_2(R,1)&=-\zeta_2(S,1)=-i,\\
    \zeta_3(R,1)&=\zeta_3(S,1)=-i, \quad\text{and}
  \end{align*}
\item one two-dimensional representation $\tau: L\to U(2)$ given on
  the generators by
$$\tau(R,1)=S=\left(\begin{smallmatrix} 1&\phantom{-}0\\0&-1\end{smallmatrix}\right)\quad{and}\quad 
\tau(S,1)=R= \left(\begin{smallmatrix}
    0&-1\\1&\phantom{-}0\end{smallmatrix}\right)$$
\end{itemize}
Restricting these representations to $D_4\times \{1\}\subseteq L$
gives the desired $\om_{z_2}$-representations of $D_4$ (which we shall
call by the same letters below).

\medskip By the above computations together with
Theorem~\ref{thm-main1} we get the following description of
$\stab(Z)^\wedge\cong \widehat{G}$ together with its topology: As a
\emph{set} we have the disjoint union
\begin{align*}
  \stab(Z)^\wedge= &Z^\circ \djunion \; (S_1\djunion  S_2\djunion  S_3)\times \{1,-1\}\\
  &\djunion  \;\{z_0\}\times \{\mu_0,\ldots, \mu_4,\lambda\}\\
  &\djunion \;\{z_1\}\times  \{\sigma\}\\
  &\djunion \;\{z_2\}\times \{\zeta_0,\ldots, \zeta_3,\tau\}.
\end{align*}
with topology restricted to the single ingredients the usual
one. Moreover, $Z^\circ$ is open in $\stab(Z)^\wedge$ and if we
approach any boundary point $\bar{z}\in \partial Z$ by a sequence
$(z_n)_n\in Z^\circ$, then this sequence converges to every element in
$\stab(Z)^\wedge$ which corresponds to $\bar z$.

Moreover, $(S_1\djunion S_2\djunion S_3) \times \{1,-1\}$ is open in
$\stab(\partial Z)^\wedge$.  Assume that $(z_n, u_n)_n$ is a sequence
in this set such that $z_n$ converges to one of the vertices $z_0,
z_1, z_2$. By passing to a subsequence, if necessary, we may assume
that $u_n=\pm1$ is constant. Using Theorem \ref{thm-main1} it is then
very easy to describe the limit points of this sequence in
$\stab(Z)^\wedge$.

We will work this out for the case where
$z_n=\left(\begin{smallmatrix} s_n\\
    \frac{1}{2}\end{smallmatrix}\right)\in S_3$ and we leave the other
cases to the reader.  The identification $S_3\times \{1,-1\}\cong
\stab(S_3)^\wedge$ is given by the map $\big(\left(\begin{smallmatrix}
    s\\ \frac{1}{2}\end{smallmatrix}\right), \pm1\big)\mapsto
\big(\left(\begin{smallmatrix} s\\
    \frac{1}{2}\end{smallmatrix}\right), \mu_{s, \pm 1}\big)$ with
$\mu_{s,\pm 1}(S)=\pm e^{\pi i s}$.
% If $s_n\to \frac{1}{2}$, it follows from Theorem \ref{thm-main1}
% that $ \big(\left(\begin{smallmatrix} s_n\\
%     \frac{1}{2}\end{smallmatrix}\right), \mu_{s_n, \pm 1}\big)$
% converges to every pair $(z_2, \nu)$ with $\nu$ an irreducible
% $\om_{z_2}$-representation such that $\mu_{\frac{1}{2}, \pm 1}$ is a
% subrepresentation of $\nu|_{K_3}$ with $K_3=\lk S\rk$.
Since $\mu_{\frac{1}{2}, \pm 1}(S)=e^{\pi i\frac{1}{2}}=\pm i$ we
easily that $\mu_{\frac{1}{2}, 1}=\zeta_0|_{K_3}=\zeta_2|_{K_3}$ and
$\mu_{\frac{1}{2}, - 1}=\zeta_1|_{K_3}=\zeta_{3}|_{K_3}$.  Moreover,
$\tau(S)=R$ has the eigenvalues $\pm 1$, so that both,
$\mu_{\frac{1}{2}, 1}$ and $\mu_{\frac{1}{2}, -1}$ are
subrepresentations of $\tau|_{K_3}$. Thus, we see that the sequence
$\big(\left(\begin{smallmatrix} s_n\\
    \frac{1}{2}\end{smallmatrix}\right), \mu_{s_n, 1}\big)_n$
converges to $(z_2, \zeta_0), (z_2, \zeta_2), (z_2,\tau)$ and
$\big(\left(\begin{smallmatrix} s_n\\
    \frac{1}{2}\end{smallmatrix}\right), \mu_{s_n, 1}\big)_n$
converges to $(z_2, \zeta_1), (z_2,\zeta_3)$ and $(z_2, \tau)$.

Similarly, if $s_n\to 0$, then $\mu_{0, \pm1}(S)=\pm 1$. If $\sigma$
is the unique irreducible $\om_{z_1}$-representation, we have
$\sigma(S)=\left(\begin{smallmatrix} 0&1\\1&0\end{smallmatrix}\right)$
which has eigenvalues $\pm1$.  thus we see that $\mu_{0, 1}$ and
$\mu_{0, -1}$ are both sub representations of $\sigma|_{K_2}$, hence
both sequences $\big(\left(\begin{smallmatrix} s_n\\
    \frac{1}{2} \end{smallmatrix}\right), \mu_{s_n, \pm 1}\big)_n$
converge to $(z_1, \sigma)$.
\end{example}

%%%%%%%%%%%%%%%%%%%%%%%%%%%%%%%%%%%%%%%%%%%%%%%%%%%%

%%%% End Matter
%%%%%%%%%%%%%%%%%%%%%%%%%%%%%%%%%%%%%%%%%%%%%%%%%%%%%
% \bibliographystyle{amsxport}
% % %
% \bibliography{references-nov01}

\def\noopsort#1{}\def\cprime{$'$} \def\sp{^}
% \bib, bibdiv, biblist are defined by the amsrefs package.
\begin{bibdiv}
\begin{biblist}

\bib{bor:seminar60}{book}{
      author={Borel, Armand},
       title={Seminar on transformation groups},
      series={With contributions by G. Bredon, E. E. Floyd, D. Montgomery, R.
  Palais. Annals of Mathematics Studies, No. 46},
   publisher={Princeton University Press},
     address={Princeton, N.J.},
        date={1960},
      review={\MR{0116341 (22 \#7129)}},
}

\bib{deiech:principles09}{book}{
      author={Deitmar, Anton},
      author={Echterhoff, Siegfried},
       title={Principles of harmonic analysis},
      series={Universitext},
   publisher={Springer},
     address={New York},
        date={2009},
        ISBN={978-0-387-85468-7},
      review={\MR{2457798 (2010g:43001)}},
}

\bib{ech:pams90}{article}{
      author={Echterhoff, Siegfried},
       title={On induced covariant systems},
        date={1990},
     journal={Proc. Amer. Math. Soc.},
      volume={108},
       pages={703\ndash 706},
}

\bib{ech:pams92}{article}{
      author={Echterhoff, Siegfried},
       title={Erratum to: ``{O}n induced covariant systems'' [{P}roc.\ {A}mer.\
  {M}ath.\ {S}oc.\ {\bf 108} (1990), no.\ 3, 703--706; {M}{R} 90f:46105]},
        date={1992},
        ISSN={0002-9939},
     journal={Proc. Amer. Math. Soc.},
      volume={116},
       pages={581},
      review={\MR{92m:46105}},
}

\bib{ech:ma92}{article}{
      author={Echterhoff, Siegfried},
       title={The primitive ideal space of twisted covariant systems with
  continuously varying stabilizers},
        date={1992},
     journal={Math. Ann.},
      volume={292},
       pages={59\ndash 84},
}

\bib{ech:mams96}{article}{
      author={Echterhoff, Siegfried},
       title={Crossed products with continuous trace},
        date={1996},
        ISSN={0065-9266},
     journal={Mem. Amer. Math. Soc.},
      volume={123},
      number={586},
       pages={i\ndash viii, 1\ndash 134},
      review={\MR{98f:46055}},
}

\bib{ech:xx11v2}{unpublished}{
      author={Echterhoff, Siegfried},
       title={Crossed products, the {M}ackey-{R}ieffel-{G}reen machine and
  applications},
     address={preprint},
        date={2011},
        note={(arXiv:math.OA.1006.4975v2)},
}

\bib{echeme:em11}{article}{
      author={Echterhoff, Siegfried},
      author={Emerson, Heath},
       title={Stucture and {$K$}-theory for crossed products by proper
  actions},
        date={2011},
     journal={Expo. Math.},
      volume={29},
       pages={300\ndash 344},
}

\bib{echros:pjm95}{article}{
      author={Echterhoff, Siegfried},
      author={Rosenberg, Jonathan},
       title={Fine structure of the {M}ackey machine for actions of abelian
  groups with constant {M}ackey obstuction},
        date={1995},
     journal={Pacific J. Math.},
      volume={170},
       pages={17\ndash 52},
}

\bib{echwil:ma95}{article}{
      author={Echterhoff, Siegfried},
      author={Williams, Dana~P.},
       title={Crossed products whose primitive ideal spaces are generalized
  trivial {$\widehat G$}-bundles},
        date={1995},
     journal={Math. Ann.},
      volume={302},
       pages={269\ndash 294},
}

\bib{echwil:jot01}{article}{
      author={Echterhoff, Siegfried},
      author={Williams, Dana~P.},
       title={Locally inner actions on {$C_0(X)$}-algebras},
        date={2001},
        ISSN={0379-4024},
     journal={J. Operator Theory},
      volume={45},
      number={1},
       pages={131\ndash 160},
      review={\MR{1 823 065}},
}

\bib{gre:am78}{article}{
      author={Green, Philip},
       title={The local structure of twisted covariance algebras},
        date={1978},
     journal={Acta Math.},
      volume={140},
       pages={191\ndash 250},
}

\bib{hrw:tams00}{article}{
      author={Huef, Astrid~an},
      author={Raeburn, Iain},
      author={Williams, Dana~P.},
       title={An equivariant {B}rauer semigroup and the symmetric imprimitivity
  theorem},
        date={2000},
        ISSN={0002-9947},
     journal={Trans. Amer. Math. Soc.},
      volume={352},
      number={10},
       pages={4759\ndash 4787},
      review={\MR{2001b:46107}},
}

\bib{horr:etds86}{article}{
      author={Hurder, Steven},
      author={Olesen, Dorte},
      author={Raeburn, Iain},
      author={Rosenberg, Jonathan},
       title={The {C}onnes spectrum for actions of abelian groups on
  continuous-trace algebras},
        date={1986},
     journal={Ergod. Th. \& Dynam. Sys.},
      volume={6},
       pages={541\ndash 560},
}

\bib{kirwas:ma95}{article}{
      author={Kirchberg, Eberhard},
      author={Wassermann, Simon},
       title={Operations on continuous bundles of {\cs}-algebras},
        date={1995},
     journal={Math. Ann.},
      volume={303},
       pages={677\ndash 697},
}

\bib{moo:tams64}{article}{
      author={Moore, Calvin~C.},
       title={Extensions and low dimensional cohomology theory of locally
  compact groups. {I}},
        date={1964},
     journal={Trans. Amer. Math. Soc.},
      volume={113},
       pages={40\ndash 63},
}

\bib{moo:tams64b}{article}{
      author={Moore, Calvin~C.},
       title={Extensions and low dimensional cohomology theory of locally
  compact groups. {II}},
        date={1964},
     journal={Trans. Amer. Math. Soc.},
      volume={113},
       pages={64\ndash 86},
}

\bib{neu:phd11}{thesis}{
      author={Neumann, Katharina},
       title={A description of the {J}acobson topology on the spectrum of a
  transformation group {$\cs$}-algebras by proper actions},
        type={Ph.D. Thesis},
organization={Westf\"alische Wilhelms-Universit\"at M\"unster},
        date={2011},
}

\bib{olerae:jfa90}{article}{
      author={Olesen, Dorte},
      author={Raeburn, Iain},
       title={Pointwise unitary automorphism groups},
        date={1990},
        ISSN={0022-1236},
     journal={J. Funct. Anal.},
      volume={93},
      number={2},
       pages={278\ndash 309},
      review={\MR{92b:46105}},
}

\bib{pal:aom61}{article}{
      author={Palais, Richard~S.},
       title={On the existence of slices for actions of non-compact {L}ie
  groups},
        date={1961},
     journal={Ann. of Math.},
      volume={73},
       pages={295\ndash 323},
}

\bib{phirae:jot84}{article}{
      author={Phillips, John},
      author={Raeburn, Iain},
       title={Crossed products by locally unitary automorphism groups and
  principal bundles},
        date={1984},
        ISSN={0379-4024},
     journal={J. Operator Theory},
      volume={11},
      number={2},
       pages={215\ndash 241},
      review={\MR{86m:46058}},
}

\bib{rae:ma88}{article}{
      author={Raeburn, Iain},
       title={Induced {$C\sp *$}-algebras and a symmetric imprimitivity
  theorem},
        date={1988},
        ISSN={0025-5831},
     journal={Math. Ann.},
      volume={280},
      number={3},
       pages={369\ndash 387},
      review={\MR{90k:46144}},
}

\bib{raeros:tams88}{article}{
      author={Raeburn, Iain},
      author={Rosenberg, Jonathan},
       title={Crossed products of continuous-trace {$C^*$}-algebras by smooth
  actions},
        date={1988},
        ISSN={0002-9947},
     journal={Trans. Amer. Math. Soc.},
      volume={305},
      number={1},
       pages={1\ndash 45},
      review={\MR{89e:46077}},
}

\bib{raewil:tams85}{article}{
      author={Raeburn, Iain},
      author={Williams, Dana~P.},
       title={Pull-backs of {$C\sp \ast$}-algebras and crossed products by
  certain diagonal actions},
        date={1985},
        ISSN={0002-9947},
     journal={Trans. Amer. Math. Soc.},
      volume={287},
      number={2},
       pages={755\ndash 777},
      review={\MR{86m:46054}},
}

\bib{raewil:jfa88}{article}{
      author={Raeburn, Iain},
      author={Williams, Dana~P.},
       title={Crossed products by actions which are locally unitary on the
  stabilisers},
        date={1988},
        ISSN={0022-1236},
     journal={J. Funct. Anal.},
      volume={81},
      number={2},
       pages={385\ndash 431},
      review={\MR{91c:46093}},
}

\bib{raewil:iumj91}{article}{
      author={Raeburn, Iain},
      author={Williams, Dana~P.},
       title={Moore cohomology, principal bundles, and actions of groups on
  {$C\sp *$}-algebras},
        date={1991},
        ISSN={0022-2518},
     journal={Indiana Univ. Math. J.},
      volume={40},
      number={2},
       pages={707\ndash 740},
      review={\MR{MR1119194 (92j:46125)}},
}

\bib{rie:pm88}{incollection}{
      author={Rieffel, Marc~A.},
       title={Proper actions of groups on {$C^*$}-algebras},
        date={1990},
   booktitle={Mappings of operator algebras ({P}hiladelphia, {PA}, 1988)},
      series={Progr. Math.},
      volume={84},
   publisher={Birkh\"auser Boston},
     address={Boston, MA},
       pages={141\ndash 182},
      review={\MR{92i:46079}},
}

\bib{rie:em04}{article}{
      author={Rieffel, Marc~A.},
       title={Integrable and proper actions on {$C\sp *$}-algebras, and
  square-integrable representations of groups},
        date={2004},
        ISSN={0723-0869},
     journal={Expo. Math.},
      volume={22},
      number={1},
       pages={1\ndash 53},
      review={\MR{MR2166968}},
}

\bib{sch:aim80}{article}{
      author={Schochetman, I.},
       title={The dual topology of certain group extensions},
        date={1980},
     journal={Adv. in Math.},
      volume={35},
       pages={113\ndash 128},
}

\bib{wil:crossed}{book}{
      author={Williams, Dana~P.},
       title={Crossed products of {$C{\sp \ast}$}-algebras},
      series={Mathematical Surveys and Monographs},
   publisher={American Mathematical Society},
     address={Providence, RI},
        date={2007},
      volume={134},
        ISBN={978-0-8218-4242-3; 0-8218-4242-0},
      review={\MR{MR2288954 (2007m:46003)}},
}

\end{biblist}
\end{bibdiv}

\end{document}